\documentclass[11pt]{article}

\usepackage{amsmath,amssymb,amsthm,mathtools}
\usepackage{lineno}
\usepackage{hyperref}
\usepackage{setspace}
\newcommand{\Feight} {\fontsize{8}{11}\selectfont  }
\newcommand{\Fnine} {\fontsize{9}{11}\selectfont  }

\newtheorem{theorem}{Theorem}[section]
\newtheorem{lemma}[theorem]{Lemma}
\newtheorem{proposition}[theorem]{Proposition}

\newtheorem{assumption}[theorem]{Assumption}
\newtheorem{definition}[theorem]{Definition}
\newtheorem{remark}{Remark}[section]

\title{Weak Information Geometry:\\
Riemannian Structures from Distributional Inference Functions and Stein Discrepancies}
\author{R. Labouriau\footnote{Department of Mathematics, Aarhus University.\\
                                   e-mail:  
                                                 \texttt{rodrigo.labouriau@rlstatlab.com}}
}

\date{Summer 2026}

\begin{document}
\maketitle


\begin{abstract} \Feight
The class of parametric statistical models that can be treated as
Riemannian manifolds---and hence handed to the differential-geometric
theory of statistical models---is considerably larger than the
classical Fisher--Rao setting allows, once one works in the space of
tempered distributions. A law is represented by a tempered
distribution $T_\theta\in\mathcal S'(\mathbb R^k)$, while an
\emph{instrument}---a positive Schwartz kernel, a weak regular
inference function, or a weak Stein representation, in the sense of a
companion paper on inference functionals, generalising the classical
theory of estimating functions---extracts information from the law
without being part of it. Any instrument with full-rank sensitivity
and positive-definite variability induces the Godambe information
$G(\theta)=S(\theta)^\top V(\theta)^{-1}S(\theta)$, a Riemannian
metric on the parameter space; the
Fisher--Rao manifold is recovered exactly when the score is an
admissible instrument, and every Godambe metric is dominated by the
Fisher metric in the Loewner order whenever the latter exists. Four
examples lie outside the
Fisher--Rao class for four different reasons: a location model built
on the Cantor distribution (an undominated family---no likelihood, no
score, and no Fisher information exist at all), the uniform scale
model (parameter-dependent support), the shifted exponential model
(transform-based inference), and a stratified finite mixture (a
provably biased score in a dominated model); a
lattice stochastic heat equation driven by $\alpha$-stable noise
provides a fifth, dynamical example, whose closed-form weak Godambe
information stabilises at a rate governed by the spectral gap of the
discrete Laplacian.
Quadratic Stein discrepancies induce the same local geometry, and
reproducing-kernel constructions generate a hierarchy of geometries.
Because there is no canonical instrument---Chentsov--Markov
invariance is traded for existence---the model carries a family
of Godambe metrics; we discuss the inferential, diagnostic,
geometric, and computational roles of its members, and show that weak
inferential separation (nonformation) appears geometrically as
block-diagonality of the Godambe metric with respect to the
interest--nuisance splitting.
\end{abstract}

\newpage
\Fnine
\tableofcontents
\newpage
\normalsize

\section{Introduction}\label{sec:introduction}

The central message of this paper is that the class of parametric
statistical models that can be viewed as Riemannian manifolds---and
therefore treated with the classical differential-geometric theory of
statistical models---can be \emph{significantly extended} by changing
the way a model is represented. Instead of describing each law by a
probability density, we describe it by the tempered distribution it
induces, that is, by the continuous linear functional that maps a
test function to its expectation; and we extract information from
this representation by pairing it with suitably chosen functions,
which thus play the role of \emph{measurement instruments}. The
geometry of the model is then read off from what the instruments
measure, rather than from the local behaviour of a likelihood. A
single example conveys the reach of the extension. Let $C$ have the
Cantor distribution, and consider the location family generated by
$X = \theta + C$ (see Section~\ref{ssec:cantor} for details). Every
law in this family is singular continuous, and---more radically---the
family admits no dominating $\sigma$-finite measure whatsoever
(Proposition~\ref{prop:cantor-undominated}): there is no likelihood,
no score, no Fisher information, and the Fisher--Rao construction
cannot even begin. Yet the model has moments of every order, an
explicit characteristic function, and, once probed by bounded
instruments, closed-form Riemannian metrics. Nothing about the model
is informationally pathological; what fails is a particular
\emph{representation} of information, the one that insists on
densities.

In the representation adopted throughout, a probability law is thus
represented by a tempered distribution $T_\theta\in\mathcal S'(\mathbb
R^k)$, on the same footing as a density or a characteristic function,
and information is extracted from it through an \emph{instrument}
that serves as a measurement device rather than as part of the law:
in the simplest case, a positive Schwartz kernel, which renders
otherwise inadmissible probes (monomials, oscillators) available;
more generally, one of the two classes of estimating-function-type
instruments described later in this introduction and formally
introduced in Section~\ref{sec:preliminaries}. This construction
simultaneously generalises the classical notion of regular inference
function, due to Godambe \cite{Godambe1960}, and the classical notion
of Stein representation, and allows the methods of
information geometry, as developed by Barndorff-Nielsen
\cite{BarndorffNielsen1978}, Amari \cite{Amari1985}, Amari and Nagaoka
\cite{AmariNagaoka2000}, and others \cite{BarndorffNielsenCoxReid1986},
to be applied in contexts that were otherwise inaccessible, including
models without densities and without dominating measures, models with
parameter-dependent support, models defined only through transforms,
and dominated models whose score function fails to be a sufficiently
regular inference function.

Recall first the classical picture, and where it fails. Under
appropriate regularity conditions, a parametric statistical
model $\mathcal P = \{P_\theta : \theta \in \Theta\}$ carries the
structure of a Riemannian manifold, with the Fisher information matrix
$I(\theta)$, defined by
\[
I(\theta)_{jk}
= \mathrm{E}_\theta\!\left[
    \frac{\partial \log p_\theta(X)}{\partial \theta_j}
    \cdot
    \frac{\partial \log p_\theta(X)}{\partial \theta_k}
\right],
\]
serving as the metric tensor on the parameter
space~$\Theta \subseteq \mathbb R^p$. This
construction requires: (a) the existence of a density
$p_\theta = dP_\theta/d\lambda$ with respect to a dominating
measure~$\lambda$; (b) differentiability of $\log p_\theta$ with
respect to~$\theta$; and (c) finiteness and positive definiteness of
$I(\theta)$.

These conditions fail in many cases of interest. Heavy-tailed
families such as the Cauchy possess no moments of any order, so that
every moment-based route to a geometry is closed there, even though
the likelihood route itself remains regular
(Remark~\ref{rem:cauchy-sinusoid});
singular models in the sense of Watanabe
\cite{Watanabe2009} have degenerate Fisher matrices (see also
Watanabe and Amari \cite{WatanabeAmari2003} for an earlier treatment
of singularities in the information-geometric setting); models defined
only through characteristic functions or other transforms may not
possess densities at all---and singular-continuous families, such as
the Cantor location model of Section~\ref{ssec:cantor}, may admit no
dominating $\sigma$-finite measure whatsoever, so that no likelihood
can even be written down; and, as shown in \cite{Labouriau2022}, there
are finite mixture models---stratified models in which the group
membership of some observations is known only up to probabilities not
in $\{0,1\}$---whose score function is biased when viewed as an
inference function, so that the likelihood-based route to a
Riemannian metric fails even in dominated models with a formally
defined likelihood. In all these situations the classical
Fisher--Rao construction is either unavailable or formally defined but
inferentially meaningless, and the differential-geometric machinery of
information geometry cannot be brought to bear.

The extension rests on two complementary classes of instrument, both
acting on the tempered distribution $T_\theta$ that represents the law,
and both formulated in the space of tempered distributions
$\mathcal{S}'(\mathbb R^k)$:

\begin{enumerate}
\item \emph{Weak regular inference functions.} This is a distributional
generalisation of Godambe's \cite{Godambe1960} classical notion of a
regular inference function and of the formulation systematised in
Chapter~4 of J\o{}rgensen and Labouriau \cite{JorgensenLabouriau2012}.
Regularity is required only in a weak sense: unbiasedness and
differentiability are expressed as pairings between the distributional
representation of the model and suitable test functions, rather than as
pointwise or $P_\theta$-a.s.\ identities on a density. Several concrete
classes of weak regular inference functions, together with the
observation operators that generalise them, are developed in the
companion paper \cite{C}; we recall what is needed here in
Section~\ref{sec:preliminaries}.

\item \emph{Weak Stein representations.} Classical Stein operators
characterise a law via identities of the form
$\mathrm{E}_\theta[\mathcal A_\theta g(X)] = 0$ for $g$ in a test class.
In the distributional framework these are formulated as pairings
$\langle T_\theta, \mathcal A_\theta g\rangle = 0$ with $g \in
\mathcal S(\mathbb R^k)$, which do not require the existence of a
density. Each test function $g$ then yields a weak regular inference
function $\psi_g(x;\theta) = \mathcal A_\theta g(x)$, so the two
instruments are of the same kind.
\end{enumerate}
Both give rise, via sensitivity and variability, to a Godambe-type
information matrix $G(\theta)$ which plays the role of a Riemannian
metric on $\Theta$.

We state the main contribution of the paper as a theorem. Its proof is
short given the constructions developed below: the analytic core is
Proposition~\ref{prop:godambe-metric}
(Section~\ref{sec:godambe-manifold}), and the recovery of the
Fisher--Rao manifold is Section~\ref{ssec:embedding}; the substantive
content is that instruments of the required kind exist for models far
outside the Fisher--Rao class, which is the object of
Proposition~\ref{prop:instruments} and of the examples of
Sections~\ref{sec:examples}--\ref{sec:spde}.

\begin{theorem}[Extension of the class of Riemannian statistical models]\label{thm:main}
Let $\mathcal P = \{P_\theta : \theta \in \Theta\}$, with
$\Theta \subseteq \mathbb R^p$ open, be a parametric family whose laws
are represented by tempered distributions $T_\theta \in
\mathcal S'(\mathbb R^k)$ in the sense of
Section~\ref{ssec:distr-rep}. Suppose the model is equipped with an
\emph{instrument}: a family of weak regular inference functions
$\psi(\cdot,\theta)$---admissible and identifying, and possibly
constructed from a weak Stein representation---whose sensitivity
$S(\theta) = -\mathrm{E}_\theta[\partial_\theta \psi(X,\theta)]$ and
variability $V(\theta) =
\mathrm{E}_\theta[\psi(X,\theta)\psi(X,\theta)^\top]$ are smooth in
$\theta$, with $S(\theta)$ nonsingular and $V(\theta)$ positive
definite. Then the Godambe information
\[
G(\theta) = S(\theta)^\top V(\theta)^{-1} S(\theta)
\]
is a smooth Riemannian metric on $\Theta$; we call the pair
$(\Theta,G)$---the parameter space endowed with the Godambe
information metric---a \emph{Godambe--Riemannian manifold}. When a
density exists and the score
is itself such an instrument, $G(\theta) = I(\theta)$ and the classical
Fisher--Rao manifold is recovered. For a general instrument, whenever
the Fisher information $I(\theta)$ exists and the Bartlett-type
interchange conditions hold, one has the Loewner-order comparison
\[
G(\theta) \;\preceq\; I(\theta),
\]
with equality if and only if $\psi$ is a nonsingular linear
transformation of the score function. In one-parameter models the
comparison reads
\[
G(\theta) = \mathrm{ARE}(\psi,\theta)\,I(\theta),
\qquad \mathrm{ARE}(\psi,\theta)\in(0,1] :
\]
the two metrics are conformally related, and the conformal factor is
the asymptotic relative efficiency.
\end{theorem}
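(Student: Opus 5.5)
The proof splits into four parts, taken in order: the metric property, the recovery of the Fisher--Rao manifold, the Loewner comparison with its equality case, and the one-parameter reduction.

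\emph{Metric property.} $V(\theta)$ is symmetric positive definite and smooth, so $V(\theta)^{-1}$ is smooth (matrix inversion is analytic on the open cone of positive definite matrices) and positive definite. Then $G(\theta)=S^\top V^{-1}S$ is smooth and symmetric. For $u\neq 0$ we have $u^\top G u=(Su)^\top V^{-1}(Su)>0$, because $S$ is nonsingular and so $Su\neq0$. Hence $G$ is a smooth Riemannian metric on the open set $\Theta$. This is the content I would attribute to Proposition~\ref{prop:godambe-metric}.

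\emph{Recovery of Fisher--Rao.} When $\psi$ is the score $U=\partial_\theta\log p_\theta$ and it is admissible, $V=I$ by definition. Differentiating $\mathrm{E}_\theta[U]=0$ under the integral (the weak differentiability in the admissibility definition) gives the Bartlett identity $S=-\mathrm{E}_\theta[\partial_\theta U]=I$. Therefore $G=I\,I^{-1}I=I$.

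\emph{Loewner comparison.} For a general $\psi$ I would first differentiate the unbiasedness identity $\langle T_\theta,\psi(\cdot,\theta)\rangle=0$ with respect to $\theta$, using the assumed interchange conditions. This gives
\[
S(\theta)=\mathrm{E}_\theta[\psi U^\top]=\mathrm{Cov}_\theta(\psi,U).
\]
Identifying the weak derivative of the pairing with the covariance against the score is where the interchange hypothesis is needed; I expect this to be the main delicate point, since the instrument is defined only through pairings with $T_\theta$. Once a density exists, though, the pairing is an ordinary integral and the step is routine. Next consider the joint covariance of $(\psi,U)$,
\[
\begin{pmatrix} V & S\\ S^\top & I\end{pmatrix}\succeq 0 .
\]
Since $V\succ0$, the Schur complement gives $I-S^\top V^{-1}S\succeq0$, that is, $G\preceq I$.

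\emph{Equality case.} Equality holds if and only if the Schur complement vanishes. The Schur complement equals the covariance of the residual $U-S^\top V^{-1}\psi$, so equality means this residual has zero covariance, hence is zero $P_\theta$-a.s. Thus $U=S^\top V^{-1}\psi$, and because $S$ is nonsingular, $\psi=VS^{-\top}U$ is a nonsingular linear transform of the score. Conversely, if $\psi=AU$ with $A$ nonsingular, then $S=AI$, $V=AIA^\top$, and $G=IA^\top(AIA^\top)^{-1}AI=I$.

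\emph{One-parameter case.} With $p=1$, set $\mathrm{ARE}(\psi,\theta)=G/I=S^2/(VI)=\mathrm{Corr}_\theta(\psi,U)^2$. Cauchy--Schwarz gives $\mathrm{ARE}\le1$, and $S\neq0$ gives $\mathrm{ARE}>0$. This quantity is the usual ratio of asymptotic variances $I^{-1}$ versus $V/S^2$ of the corresponding estimators, and it yields the conformal relation $G=\mathrm{ARE}\,I$.
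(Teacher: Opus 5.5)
Your proposal is correct and follows the paper's route. The metric property is exactly Proposition~\ref{prop:godambe-metric}, and your Schur-complement bound on the joint covariance of $(\psi,U)$ is the same computation as the proof of Proposition~\ref{prop:loewner}: there $I-G_\psi$ is written as the covariance of the residual of the score after $L^2(P_\theta)$-projection onto the span of $\psi$, a residual you also identify. The differences are cosmetic: you check $S=V=I$ for the score directly rather than reading $G_u=I$ off the equality case, and the interchange you invoke there is Assumption~\ref{ass:weakC1} together with condition~\eqref{eq:bartlett-score}, not part of admissibility.
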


\begin{proof}
Positive-definiteness and smoothness of $G$ are the content of
Proposition~\ref{prop:godambe-metric}. The reduction $G=I$ under the
score and the Loewner comparison for a general instrument are
established in Section~\ref{ssec:embedding}
(Proposition~\ref{prop:loewner}).
\end{proof}

\begin{proposition}[Instruments exist beyond the Fisher--Rao class]\label{prop:instruments}
For each of the following obstructions to the Fisher--Rao construction
there is a model exhibiting it that nonetheless admits an instrument in
the sense of Theorem~\ref{thm:main}, and hence carries a
Godambe--Riemannian structure:
\begin{enumerate}
\item parameter-dependent support---the uniform scale and shifted
exponential models
(Sections~\ref{ssec:uniform}--\ref{ssec:shifted-exp});
\item absence of a density---indeed, absence of \emph{any} dominating
$\sigma$-finite measure, so that no likelihood exists---the Cantor
location model (Section~\ref{ssec:cantor});
\item absence of ordinary moments---Student $t$ with $\nu\le 1$
(Remark~\ref{rem:cauchy-sinusoid}) and the
$\alpha$-stable lattice model, with $\alpha < 2$, of
Section~\ref{sec:spde};
\item a dominated model whose score is a biased inference
function---stratified finite mixed models in the sense of
\cite{Labouriau2022} (Section~\ref{ssec:mixture}).
\end{enumerate}
Consequently the class of Godambe--Riemannian models strictly contains
the Fisher--Rao class.
\end{proposition}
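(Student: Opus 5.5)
The proof is constructive, one case at a time. For each obstruction we exhibit an explicit instrument $\psi$ and check the hypotheses of Theorem~\ref{thm:main}: weak unbiasedness $\langle T_\theta,\psi(\cdot,\theta)\rangle=0$, admissibility and identifiability, smoothness of $S$ and $V$, nonsingularity of $S(\theta)$, and positive definiteness of $V(\theta)$. Once these hold, Proposition~\ref{prop:godambe-metric} gives the Riemannian structure. The strict containment then has two parts. The Fisher--Rao class lies inside the Godambe--Riemannian class because the score is an admissible instrument with $G=I$ (Section~\ref{ssec:embedding}). At least one of the listed models, say the Cantor location model, is not Fisher--Rao, since it has no likelihood at all.

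The choices of instrument are as follows.

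\textbf{Uniform scale} $X\sim U(0,\theta)$ and \textbf{shifted exponential} $X=\theta+E$. Take bounded or moment-type probes that avoid the support boundary. For the uniform scale model use $\psi(x;\theta)=x-\theta/2$, or more generally $\psi_g(x;\theta)=g(x/\theta)-\int_0^1 g$ for a Schwartz $g$. For the shifted exponential use $\psi(x;\theta)=x-\theta-1$, or its transform analogue $e^{-sx}-e^{-s\theta}/(1+s)$. In both models $S$ and $V$ are explicit scalars, with $S\neq0$ and $V>0$, so they are smooth in $\theta$.

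\textbf{Cantor location.} Use $\psi(x;\theta)=\sin(x-\theta)$, or a Schwartz probe $g(x-\theta)-\mathrm{E}\,g(C)$. The characteristic function $\phi_C(t)=e^{it/2}\prod_{j\ge1}\cos(2\pi t/3^j)$ gives closed forms for $S$ and $V$. We need to choose $g$ so that $S=\mathrm{E}\,g'(C)\neq0$; for instance $g(x)=x$ gives $S=1$ and $V=\operatorname{Var}C=1/8$. Undominatedness comes from Proposition~\ref{prop:cantor-undominated}.

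\textbf{No moments.} For Student $t$ with $\nu\le1$ in location form, the odd bounded probe $\sin(x-\theta)$ is unbiased by symmetry. Its sensitivity is $S=\mathrm{E}\cos(X-\theta)=\phi(1)>0$, and $V=(1-\phi(2))/2>0$, as in Remark~\ref{rem:cauchy-sinusoid}. For the $\alpha$-stable lattice model we cite the closed-form weak Godambe information of Section~\ref{sec:spde}, which is built from bounded characteristic-function probes.

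\textbf{Stratified mixture.} Take the unbiased inference function constructed in \cite{Labouriau2022} and Section~\ref{ssec:mixture} (a corrected score or moment equation). Check that its $S$ is nonsingular and its $V$ is positive definite, while \cite{Labouriau2022} supplies the bias of the actual score.

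The step I expect to be the main obstacle is the verification of weak regularity in the boundary and undominated cases. It requires interchanging $\partial_\theta$ with the pairing $\langle T_\theta,\cdot\rangle$ when the support moves with $\theta$, or when no density exists. In those cases I would differentiate through the location or scale representation $X=\theta+C$ or $X=\theta U$, with $U$ uniform on $(0,1)$. This turns the $\theta$-dependence into dependence of the smooth test function alone, so dominated convergence applies to bounded derivatives. Nondegeneracy is a further delicate point for the Cantor case: $S\neq0$ must be checked for the specific probe chosen.
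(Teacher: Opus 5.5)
Your overall route is the paper's. Its proof of Proposition~\ref{prop:instruments} simply points to the instruments computed in closed form in Sections~\ref{ssec:uniform}--\ref{ssec:mixture}, Remark~\ref{rem:cauchy-sinusoid} and Section~\ref{sec:spde}. Your explicit two-step account of strict containment (the score as an instrument with $G=I$, plus one model with no likelihood) sharpens the paper's ``consequently''. Resting it on the Cantor model rather than on item~3 is the right call, since the Cauchy family is Fisher-regular (Remark~\ref{rem:cauchy-sinusoid}). Several of the concrete instruments you add, however, are wrong or underspecified.

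\textbf{Cantor case.} $\psi(x;\theta)=\sin(x-\theta)$ is \emph{not} weakly unbiased. $C$ is symmetric about $\tfrac12$, not about $0$, so $\mathrm{E}_\theta[\sin(X-\theta)]=\mathrm{E}[\sin C]=\sin(\tfrac12)\,\phi_Z(1)\neq 0$, because $\phi_Z(1)>0$. You must centre the probe, e.g.\ $\sin(x-\theta-\tfrac12)$, or use your second option $g(x-\theta)-\mathrm{E}\,g(C)$. With $g=\sin$ this gives $S=\cos(\tfrac12)\phi_Z(1)\neq0$. Your fallback $g(x)=x$ is exactly the paper's moment instrument, with $S=1$ and $V=\tfrac18$; note it lies in $\mathcal G_{\mathrm{mom}}$, not in the Schwartz class. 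The characteristic function is also misquoted: it is $\phi_C(t)=e^{it/2}\prod_{j\ge1}\cos(t\,3^{-j})$. Putting $2\pi$ into the cosines mixes two Fourier conventions; under the $2\pi$ convention the prefactor would be $e^{i\pi t}$.

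\textbf{Mixture case.} ``A corrected score'' is not a recipe that works. In the setting of \cite{Labouriau2022}, each observation is generated by a single, unknown component law, and the score's bias comes precisely from evaluating under that component. Any instrument centred only under the mixed density is exposed to the same mechanism. What is required is simultaneous unbiasedness under every component, \eqref{eq:simultaneous-unbiased}. The polynomial pair $W=(\psi_a,\psi_b)$ of Proposition~\ref{prop:mixture} has this property, and for it $|\det S|=\bar w_1\bar w_2\,\delta^3$, so nonsingularity needs both groups effectively present.

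\textbf{Smaller points.}
\begin{itemize}
\item For $\psi_g(x;\theta)=g(x/\theta)-\int_0^1 g$ one gets $S=\bigl(g(1)-\int_0^1 g\bigr)/\theta$. So $S\neq0$ is a condition on $g$, not automatic.
\item For Student~$t$ with general $\nu\le1$, positivity of $\phi(1)$ needs a justification. For instance, $t_\nu$ is a Gaussian scale mixture, so its characteristic function is strictly positive.
\end{itemize}

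\textbf{The obstacle you single out is not on the critical path.}
\begin{itemize}
\item $S(\theta)=-\langle T_\theta,\partial_\theta\psi(\cdot;\theta)\rangle$ differentiates the instrument under a \emph{fixed} law.
\item Local identification only uses $\theta\mapsto\mathrm{E}_P[\psi(X;\theta)]$ for a fixed $P$.
\item In every case at hand $S$ and $V$ are closed-form, so their smoothness can be read off directly.
\end{itemize}
Hence no interchange of $\partial_\theta$ with a moving support or an undominated family is needed for Proposition~\ref{prop:godambe-metric}. That interchange enters only through the weak Bartlett identity, which is used for the Stein expansion and the Loewner comparison.
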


\begin{proof}
The required instruments are exhibited, with their sensitivity and
variability computed in closed form, in the sections and remark
cited.
\end{proof}

The contribution is threefold. \emph{Analytically}, given
Proposition~\ref{prop:godambe-metric}, Theorem~\ref{thm:main} turns
the distributional constructions of \cite{C,JorgensenLabouriau2012}
into a Riemannian structure, and Proposition~\ref{prop:loewner}
places the resulting family of metrics below the Fisher metric in the
Loewner order whenever the latter exists, with the score attaining
the top. \emph{Constructively},
Proposition~\ref{prop:instruments} and the examples of
Sections~\ref{sec:examples}--\ref{sec:spde} exhibit instruments, with
sensitivities and variabilities in closed form, for four
qualitatively distinct obstructions to the classical theory,
culminating in a dynamical example whose geometry stabilises at the
spectral-gap rate of the underlying lattice heat flow.
\emph{Structurally}, Sections~\ref{sec:stein}--\ref{sec:nonformation}
show that quadratic Stein discrepancies read the same local geometry,
that the non-uniqueness of the metric is a trade of Chentsov--Markov
invariance for existence, and that inferential separation appears as
block-diagonality of the metric.

The hypotheses of Theorem~\ref{thm:main}---full-rank sensitivity and
positive-definite variability---could conceivably fail for an
unfortunate choice of instrument. The companion paper
\cite{LabouriauTransversality} shows that such failures are
exceptional in a precise sense: within natural families of
instruments, those violating the hypotheses form a negligible set,
and a small perturbation of a degenerate instrument restores
non-degeneracy---much as two curves drawn at random in the plane may
well cross, but are not expected to be mutually tangent. Statements
of this kind (\emph{genericity}) are the province of transversality
theory in differential topology; none of that machinery is needed in
the present paper, where every instrument is exhibited explicitly and
its non-degeneracy is verified by direct computation.

Several general arguments already illustrate the theorem at this level of
generality. First, bounded weak inference functions such as the
sinusoidal functions $\psi_c(x,\theta) = \sin(c(x - \theta))$ have
finite variability for \emph{any} probability measure, since
$|\sin(\cdot)| \leq 1$. Their sensitivity
$S(\theta) = c \cdot \mathrm{Re}\!\bigl[e^{-ic\theta}\phi_\theta(c)\bigr]$
---the real part of the characteristic function of the \emph{centred}
variable, equal to $c\,\phi_0(c)$ in a symmetric location family---is
likewise always finite. Positive-definiteness of $G(\theta)$ requires
only that this quantity be nonzero for the chosen~$c$; since
$\phi_\theta$ is continuous with $\phi_\theta(0)=1$, there exists
$c_0>0$ such that $G_c(\theta)>0$ for all $0<|c|<c_0$. (Non-vanishing
for \emph{all} $c$ cannot be expected: characteristic functions of
P\'olya type vanish identically outside a compact interval.)

Second, the Godambe information inherits the property that makes the
Fisher information a natural Riemannian metric: it arises as the
variability of an optimal unbiased estimating equation. Godambe's
\cite{Godambe1960} optimality theorem plays the role that the
Cram\'er--Rao bound plays in the classical theory.

Third, when a density exists and the Fisher information is well
defined, every Godambe metric is dominated by the Fisher metric in
the Loewner order, with equality precisely for the score
(Proposition~\ref{prop:loewner}): the Fisher--Rao manifold sits at
the top of the family of Godambe--Riemannian structures whenever it
exists at all.

A Riemannian metric on a statistical model, once constructed, plays
several roles at
once---\emph{inferential} (local distinguishability, efficiency
bounds), \emph{diagnostic} (identifiability, sensitivity analysis),
\emph{geometric} (geodesics, curvature, distances between models),
and \emph{computational} (natural-gradient and preconditioned
algorithms)---and these roles need not coincide outside the
Fisher--Rao setting. The distributional construction yields a whole
family of metrics on the same model, whose members emphasise
different roles; Section~\ref{sec:canonicity} discusses this in
detail.

Several points of contact with existing work should be made explicit,
because the instrument reading settles them. First, when a density $f$
exists the weak moment $\langle T_\theta, x^r\varphi\rangle$ equals an
ordinary moment of the tilted function $\varphi f$, and it is tempting
to dismiss the construction as ``just $\varphi f$''. The reading adopted
here shows why this misses the point: $\varphi$ is an instrument applied
to a fixed law $T_\theta$, not a new law, so---unlike mollification,
which replaces $f$ by a smoothed density and thereby changes the
model---the law is never altered, and the construction continues to make
sense when no density exists at all. Second, sinusoidal and
characteristic-function inference functions coincide with the estimating
equations of empirical-characteristic-function inference
\cite{Heathcote1977,FeuervergerMcDunnough1981} and of transform-based
GMM, including its continuum version \cite{CarrascoFlorens2000}; what
is new here is not those estimating equations but their geometric
reading as instruments inducing a metric, and their use in models with
no density and no moments. Third, geometry generated by estimating
functions has semiparametric precedents in Amari and Kawanabe
\cite{AmariKawanabe1997} and in the estimating-function theory of
\cite{Labouriau1996}; those theories are density-based and
score-centred, whereas the present construction requires neither a
density nor a score, at the price of giving up Chentsov--Markov
invariance (see Section~\ref{sec:canonicity} and
\cite{Chentsov1982,AyJostLeSchwachhofer2017}). Finally, classical
information geometry \cite{Amari1985,BarndorffNielsen1978,AmariNagaoka2000}
is the special case in which the instrument is trivial and the score is
admissible: then $G(\theta)=I(\theta)$, as recorded in
Section~\ref{ssec:embedding}.

The paper is organised as follows. Section~\ref{sec:preliminaries}
recalls the key concepts from the distributional framework: distributional
representations, regular inference functions, weak Stein operators, and
distributional moments. Section~\ref{sec:godambe-manifold}
establishes the general framework, proving that the Godambe information
defines a Riemannian metric under natural regularity conditions and
discussing the embedding of the classical Fisher--Rao theory.
Section~\ref{sec:examples} develops four detailed examples: the uniform
scale model, the shifted exponential model, a location model built on
the Cantor distribution (an undominated family), and a stratified
finite mixture model in the sense of \cite{Labouriau2022}. Section~\ref{sec:spde} treats a structurally richer
example coming from a discrete stochastic heat equation driven by
$\alpha$-stable noise, in which the weak Godambe information can be
computed in closed form and in which the stabilisation rate of the
Godambe metric is shown to be controlled by the spectral gap of the
discrete Laplacian, providing a concrete bridge between the
\emph{dynamical} stability of the underlying system and the
\emph{geometric} stability of the associated statistical model.
Section~\ref{sec:stein} investigates the connection with
Stein discrepancies, establishing an equivalence between quadratic Stein
discrepancies and Godambe geometry, and extending the analysis to
reproducing kernel Hilbert space constructions.
Section~\ref{sec:canonicity} discusses the non-uniqueness of the Godambe
metric and strategies for selecting a canonical geometry.
Section~\ref{sec:nonformation} explores the interaction between
Godambe--Riemannian structures and weak inferential separation,
showing that the off-diagonal block of the Godambe metric measures
the failure of inferential separation, that the Bhapkar--Godambe
projection block-diagonalises the metric under an explicit generating
condition, and that odd/even instrument pairs in symmetric
location-scale models achieve exact separation automatically.
Section~\ref{sec:discussion} summarises the contributions and outlines
directions for future work.

\section{Preliminaries: the distributional framework}\label{sec:preliminaries}

We briefly recall the key concepts from the distributional framework
developed in \cite{A,B,C} and \cite{JorgensenLabouriau2012},
to the extent needed for the present paper. Full details, proofs, and
further developments may be found in those references.

\subsection{Distributional representations}\label{ssec:distr-rep}

Let $\mathcal P = \{P_\theta : \theta \in \Theta\}$ be a parametric
family of probability measures on $(\mathbb R^k, \mathcal B)$, where
$\Theta \subseteq \mathbb R^p$ is an open set. In the distributional
framework the law $P_\theta$ is represented by a \emph{tempered
distribution}: a continuous linear functional
\[
T_\theta : \mathcal S(\mathbb R^k) \longrightarrow \mathbb R,
\qquad\text{that is,}\qquad
T_\theta \in \mathcal S'(\mathbb R^k),
\]
where $\mathcal S(\mathbb R^k)$ denotes the Schwartz space of
infinitely differentiable functions all of whose derivatives decay
faster than any polynomial, and $\mathcal S'(\mathbb R^k)$, the space
of \emph{tempered distributions}, is its topological dual. Following
the usage of functional analysis, the evaluation of a distribution
$T$ at a test function $g$ is written as a \emph{pairing},
\[
\langle T, g \rangle \;:=\; T(g),
\]
a notation used throughout the paper. The representation is defined
by the requirement that, for all $g \in \mathcal S(\mathbb R^k)$,
\[
\mathrm{E}_{P_\theta}[g(X)]
\;=\; \int_{\mathbb R^k} g(x)\, dP_\theta(x)
\;=\; \langle T_\theta, g \rangle ;
\]
the integral is always well defined, and finite, because a Schwartz
function is bounded and $P_\theta$ is a probability measure.
(Complex-valued test functions, needed for oscillatory probes, are
handled by pairing real and imaginary parts separately.) Thus
$T_\theta$ is nothing more exotic than the expectation operator of
$P_\theta$ restricted to Schwartz test functions, and it
characterises the law on the same footing as a density, a
distribution function, or a characteristic function.

Information about the law is extracted through an \emph{instrument}: a
positive Schwartz kernel $\varphi \in \mathcal S(\mathbb R^k)$, or more
generally an observation operator in the sense of the companion paper
\cite{C}, that serves as a measurement device and is
\emph{not} part of the law. The instrument renders admissible the probes
one actually wishes to apply---monomials $x^r$, oscillators $e^{iux}$,
the outputs $\mathcal A_\theta g$ of a Stein operator---none of which is
a Schwartz function, and none of which therefore pairs with a general
$T_\theta \in \mathcal S'(\mathbb R^k)$ on its own; multiplication by
$\varphi$ places $x^r\varphi,\, e^{iux}\varphi \in \mathcal S(\mathbb
R^k)$, so that the \emph{weak expectation}
\[
{}^{(\varphi)}\mathrm{E}_\theta[g] := \langle T_\theta, g\varphi\rangle
\]
is well defined.

Because $T_\theta$ represents a \emph{probability measure}, the pairing
extends canonically well beyond $\mathcal S(\mathbb R^k)$. We record
this once, so that all the instruments used in this paper---sinusoids,
cosines of linear functionals, characteristic-function residuals,
polynomials---are admissible without case-by-case cutoff arguments.

\begin{lemma}[Extension of the pairing]\label{lem:extension}
Let $T_\theta\in\mathcal S'(\mathbb R^k)$ represent $P_\theta$, i.e.\
$\langle T_\theta,g\rangle = \int g\,dP_\theta$ for all
$g\in\mathcal S(\mathbb R^k)$. Then the pairing extends uniquely to
\[
\langle T_\theta, g\rangle \;:=\; \int g\, dP_\theta,
\qquad g \in L^1(P_\theta),
\]
the extension being continuous under dominated pointwise convergence
and consistent with the distributional pairing whenever the latter is
defined. In particular the following \emph{admissible classes} of
probes may be paired with $T_\theta$:
\begin{enumerate}
\item $\mathcal G_{\mathrm b}$: bounded continuous functions
(sinusoids, cosines of linear functionals, residuals
$e^{itx}-\phi_\theta(t)$, constants);
\item $\mathcal G_{\varphi} := \{g : g\varphi\in\mathcal S(\mathbb
R^k)\}$ for a positive Schwartz kernel $\varphi$: kernel-weighted
probes (monomials $x^r$ and oscillators $e^{iux}$ observed through
$\varphi$, as in the weak expectation
${}^{(\varphi)}\mathrm{E}_\theta$);
\item $\mathcal G_{\mathrm{mom}}$: measurable functions of polynomial
growth, admissible at those $\theta$ for which the corresponding
ordinary moments of $P_\theta$ exist.
\end{enumerate}
\end{lemma}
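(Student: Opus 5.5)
The plan is to exploit the fact that $T_\theta$ is, by hypothesis, nothing but the restriction of the integral against the probability measure $P_\theta$ to Schwartz test functions. The extension is therefore defined directly by $\langle T_\theta,g\rangle:=\int g\,dP_\theta$ for $g\in L^1(P_\theta)$. The work lies in checking four things: well-definedness, consistency with the distributional pairing, continuity under dominated pointwise convergence, and uniqueness. Well-definedness is immediate, since $P_\theta$ is a finite measure and $g$ is integrable. Consistency is also immediate: on $\mathcal S(\mathbb R^k)\subset C_b(\mathbb R^k)\subset L^1(P_\theta)$ the new pairing agrees with the old one by the defining identity of the representation. Continuity is exactly Lebesgue's dominated convergence theorem: if $g_n\to g$ pointwise $P_\theta$-a.e.\ with $|g_n|\le h\in L^1(P_\theta)$, then $\int g_n\,dP_\theta\to\int g\,dP_\theta$.

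Uniqueness is the step needing an argument, and I would make it the main obstacle. Any extension that is linear, agrees with $T_\theta$ on $\mathcal S$, and is continuous under dominated pointwise convergence is determined on $L^1(P_\theta)$; I would prove this by a monotone-class argument. First, for an open box or open set $U$, choose Schwartz (indeed $C_c^\infty$) functions $0\le g_n\uparrow \mathbf 1_U$ pointwise. These are dominated by the constant $1\in L^1(P_\theta)$, so the extension is forced on indicators of open sets. The class of Borel sets $B$ on which the extension is forced to equal $P_\theta(B)$ is a Dynkin system: closure under differences and increasing unions again uses domination by $1$. It contains the $\pi$-system of open sets, so by the $\pi$--$\lambda$ theorem it is all of $\mathcal B$. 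Linearity then forces the extension on simple functions. Finally, any $g\in L^1(P_\theta)$ is a pointwise limit of simple $s_n$ with $|s_n|\le|g|$, which is a dominated sequence, so the value at $g$ is forced. The delicate point is to state continuity with the dominating function required in $L^1(P_\theta)$, which is precisely how I interpret the lemma's phrase.

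It remains to verify that the three admissible classes lie in $L^1(P_\theta)$. For $\mathcal G_{\mathrm b}$, a bounded measurable $g$ satisfies $\int|g|\,dP_\theta\le\sup|g|<\infty$ because $P_\theta$ is a probability measure; this covers sinusoids, cosines, the residuals $e^{itx}-\phi_\theta(t)$ (handled through their real and imaginary parts), and constants. For $\mathcal G_\varphi$, $g\varphi\in\mathcal S$ is bounded and hence integrable, so $\langle T_\theta,g\varphi\rangle$ is already defined in the distributional sense and agrees with the integral. For $\mathcal G_{\mathrm{mom}}$, $|g(x)|\le C(1+|x|^m)$ is integrable exactly when the $m$-th absolute moment of $P_\theta$ is finite, which is the stated proviso.
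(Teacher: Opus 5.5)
Your proof is correct. It shares the paper's skeleton: force the value on a generating class, spread it by a monotone-class theorem, then pass to $L^1(P_\theta)$ by dominated approximation. The generating class and the monotone-class tool, however, differ. The paper first forces the value on all of $C_b(\mathbb R^k)$, approximating $g\in C_b$ by $g_n=(g\chi_n)*\rho_n\in C_c^\infty$ with $\|g_n\|_\infty\le\|g\|_\infty$. It then uses the functional monotone class theorem to reach bounded Borel functions, and finally truncates, using $g\,\mathbf 1\{|g|\le n\}\to g$ with domination by $|g|$. You instead force the value on indicators of open sets via smooth Urysohn functions, extend it to all Borel indicators by Dynkin's $\pi$--$\lambda$ theorem, and reach $L^1(P_\theta)$ through simple functions with $|s_n|\le|g|$. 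Your route needs no mollification and only the set form of the monotone class theorem. The paper's route shows directly that the value is forced on $\mathcal G_{\mathrm b}$, the class actually used later, without passing through indicators. Two small points. First, closure of your Dynkin class under proper differences comes from linearity, not from domination by $1$. You assume linearity explicitly, which is good; the paper uses it only implicitly when it calls $\mathcal H$ a vector space. Second, for $\mathcal G_{\mathrm{mom}}$, a bound $|g|\le C(1+|x|^m)$ gives integrability \emph{if} the $m$-th absolute moment is finite, not ``exactly when''.
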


\begin{proof}
\emph{Existence} is immediate: the map $g\mapsto\int g\,dP_\theta$ is
well defined on $L^1(P_\theta) \supseteq \mathcal S(\mathbb R^k)$ and
restricts to the distributional pairing on $\mathcal S(\mathbb R^k)$
by the representation property.

\emph{Uniqueness.} Let $\Lambda$ be any extension to $L^1(P_\theta)$
that is continuous under dominated pointwise sequential convergence
and agrees with the pairing on $\mathcal S(\mathbb R^k)$, and set
$\mathcal C := \{g \in L^1(P_\theta) : \Lambda(g) = \int
g\,dP_\theta\}$. By the continuity assumption (applied to both
$\Lambda$ and the integral), $\mathcal C$ is closed under dominated
pointwise sequential limits, and $\mathcal S(\mathbb R^k) \subseteq
\mathcal C$ by hypothesis.

\emph{Step 1: $C_b(\mathbb R^k) \subseteq \mathcal C$.} Given $g \in
C_b(\mathbb R^k)$, choose smooth cutoffs $\chi_n$ with $\mathbf
1_{B(0,n)} \le \chi_n \le \mathbf 1_{B(0,n+1)}$ and mollifiers
$\rho_n$ supported in $B(0,1/n)$, and set $g_n := (g\chi_n) * \rho_n
\in C_c^\infty(\mathbb R^k) \subset \mathcal S(\mathbb R^k)$. Then
$\|g_n\|_\infty \le \|g\|_\infty$ (convolution of a function bounded
by $\|g\|_\infty$ with a probability density), and $g_n(x) \to g(x)$
for every $x$: for $n$ large one has $g_n = g * \rho_n$ on a
neighbourhood of $x$, and $g * \rho_n(x) \to g(x)$ by continuity of
$g$. Hence $g \in \mathcal C$, with dominating function the constant
$\|g\|_\infty$.

\emph{Step 2: every bounded Borel function belongs to $\mathcal C$.}
The collection $\mathcal H := \{g \text{ bounded Borel} : g \in
\mathcal C\}$ is a vector space containing the constants, containing
the algebra $C_b(\mathbb R^k)$, which separates the points of
$\mathbb R^k$, and closed under bounded pointwise sequential limits
(such limits are dominated by a constant). By the functional monotone
class theorem, $\mathcal H$ contains every bounded function
measurable with respect to $\sigma(C_b(\mathbb R^k)) = \mathcal
B(\mathbb R^k)$.

\emph{Step 3: from bounded to integrable.} For $g \in L^1(P_\theta)$,
the truncations $g\,\mathbf 1\{|g| \le n\}$ are bounded Borel
functions converging pointwise to $g$ and dominated by $|g| \in
L^1(P_\theta)$; hence $g \in \mathcal C$.

Finally, for $g\in\mathcal G_\varphi$ the quantity $\langle T_\theta,
g\varphi\rangle$ is defined distributionally and equals $\int
g\varphi\,dP_\theta$ by the representation property, consistently
with the extension.
\end{proof}

Throughout the paper, every pairing $\langle T_\theta,
\psi(\cdot;\theta)\rangle$ is understood in the sense of
Lemma~\ref{lem:extension}, and each example names the admissible class
in which its instrument lives.

When a density $p_\theta = dP_\theta/d\lambda$ exists, $T_\theta$ is the
regular distribution defined by $p_\theta$ and the pairing reduces to
$\langle T_\theta, g\rangle = \int g\,p_\theta\,d\lambda$; the framework
then contains the classical one. Its advantage is that it applies
verbatim to models without densities, including models defined through
characteristic-function constraints, distributional equations, or
transform-based specifications.

\begin{remark}[Singularity as differentiated regularity]
The use of tempered distributions does not introduce arbitrarily
pathological objects. By the classical structure theorem (Strichartz,
\emph{A Guide to Distribution Theory and Fourier Transforms},
Section~6.3), every tempered distribution is a finite sum of derivatives
of continuous functions of at most polynomial growth; singular
probabilistic behaviour---point masses, jumps, heavy tails---arises when
ordinary functions are differentiated in the weak sense. The instrument
$\varphi$ acts as a regularising observational device that converts
these differentiated structures into stable scalar quantities.
\end{remark}

\subsection{Weak regular inference functions}\label{ssec:rif-def}

The inference-function apparatus used throughout the paper is a
\emph{weak} version, in the distributional sense, of Godambe's
\cite{Godambe1960} classical notion of a regular inference function
and of the systematic formulation of regular inference functions
developed in Chapter~4 of J\o{}rgensen and Labouriau
\cite{JorgensenLabouriau2012}. The adjective ``weak'' refers to the
fact that unbiasedness and differentiability conditions are expressed
as pairings between a distributional representation of the model and
elements of a test-function class, rather than as pointwise or
$P_\theta$-almost-sure identities involving a density. When a density
exists and is sufficiently regular, the weak definitions reduce to
the classical ones; when a density does not exist, or the score
function fails to be a sufficiently regular inference function, the weak
definitions nevertheless make sense. The systematic development of
weak regular inference functions is carried out in the companion
paper \cite{C}; here we record only what is needed.

\begin{definition}[Weak regular inference function]\label{def:rif}
A measurable function
$\psi : \mathbb R^k \times \Theta \to \mathbb R^q$
is called a \emph{weak regular inference function} for the model
$\mathcal P$ if the following conditions hold:
\begin{enumerate}
    \item \textbf{Admissibility:} For each $\theta \in \Theta$,
    $\psi(\cdot,\theta)$ belongs to one of the admissible classes of
    Lemma~\ref{lem:extension} (or to a class on which the extended
    pairing is defined), so that all pairings below are well defined.

    \item \textbf{Weak unbiasedness:} For all $\theta \in \Theta$,
    \[
    \langle T_\theta,\,\psi(\cdot,\theta)\rangle
    \;=\; \mathrm{E}_{P_\theta}[\psi(X,\theta)] \;=\; 0.
    \]

    \item \textbf{Local identification:} For every $\theta_0 \in
    \Theta$ there is a neighbourhood $U \ni \theta_0$ such that, for
    any probability measure $P$ in a suitable class with $\theta(P)
    \in U$ well defined, $\mathrm{E}_P[\psi(X,\theta)] = 0$ with
    $\theta \in U$ implies $\theta = \theta(P)$.
\end{enumerate}
\end{definition}

When the sensitivity $S(\theta)$ defined below is nonsingular, local
identification along the model holds automatically by the implicit
function theorem; global identification typically requires a richer
probe family (for instance several frequencies $t$ in the
transform-based case) and is not needed for the local geometric
constructions of this paper.

\begin{remark}[Reduction to the classical definition]\label{rem:rif-classical}
When $T_\theta$ is the distributional representation of
an absolutely continuous model and $\psi$ is sufficiently regular in
the classical sense, Definition~\ref{def:rif} reduces to the
definition of a regular inference function in the sense of Godambe
\cite{Godambe1960} and J\o{}rgensen and Labouriau
\cite[Ch.~4]{JorgensenLabouriau2012}.
\end{remark}

The standard inference-theoretic quantities associated with a weak
regular inference function are also defined in a weak sense. The
\emph{weak sensitivity} of $\psi$ at $\theta$ is
\[
S(\theta) \;=\; -\,\mathrm{E}_\theta[\partial_\theta \psi(X;\theta)]
           \;=\; -\,\langle T_\theta,\,\partial_\theta\psi(\cdot;\theta)\rangle,
\]
where the derivative $\partial_\theta \psi(\cdot;\theta)$ is
understood as an element of the relevant test-function class and the
pairing is taken in the distributional sense; the \emph{weak
variability} is
\[
V(\theta) \;=\; \mathrm{E}_\theta[\psi(X;\theta)\psi(X;\theta)^\top]
           \;=\; \langle T_\theta,\,\psi(\cdot;\theta)\psi(\cdot;\theta)^\top\rangle;
\]
and the \emph{weak Godambe information} associated with $\psi$ is
\[
G(\theta) \;=\; S(\theta)^\top V(\theta)^{-1} S(\theta).
\]
The regularity in $\theta$ that these definitions require is an
assumption on the \emph{model map}, which we now state once and use
throughout.

\begin{assumption}[Weak differentiability of the model map]
\label{ass:weakC1}
For every admissible probe $g$ (Lemma~\ref{lem:extension}) the map
$\theta \mapsto \langle T_\theta, g\rangle$ is continuously
differentiable on $\Theta$, and there exist
$\partial_{\theta_j} T_\theta$, $j=1,\dots,p$, acting on admissible
probes, such that
$\partial_{\theta_j} \langle T_\theta, g\rangle
= \langle \partial_{\theta_j}T_\theta,\, g\rangle$.
\end{assumption}

Differentiating the weak unbiasedness identity $\langle T_\theta,
\psi(\cdot;\theta)\rangle = 0$ under Assumption~\ref{ass:weakC1} yields
the \emph{weak Bartlett identity}
\begin{equation}\label{eq:weak-bartlett}
S(\theta)
\;=\; -\,\langle T_\theta,\, \partial_\theta\psi(\cdot;\theta)\rangle
\;=\; \langle \partial_\theta T_\theta,\, \psi(\cdot;\theta)\rangle :
\end{equation}
the sensitivity is the pairing of the \emph{model derivative} with the
\emph{instrument}. Identity~\eqref{eq:weak-bartlett} is the analytic
engine of the paper: it justifies the local expansions of
Section~\ref{sec:stein} and underlies the geometric characterisation
of inferential separation in Section~\ref{sec:nonformation}.
In the scalar case ($q = p = 1$) this reduces to
$G(\theta) = S(\theta)^2 / V(\theta)$. When a density exists and
$\psi$ is a classical regular inference function, the weak
sensitivity, variability, and Godambe information coincide with
their classical counterparts as defined in Godambe \cite{Godambe1960}
and J\o{}rgensen and Labouriau \cite[Ch.~4]{JorgensenLabouriau2012};
thus weak Godambe information is a strict generalisation of the
Godambe information used in the theory of optimal estimating
equations. The Godambe information characterises the asymptotic
efficiency of inference based on~$\psi$: Godambe's \cite{Godambe1960}
theorem shows that $G(\theta)$ is maximised when $\psi$ is the score
function, whenever the score function exists and is itself a valid
(weak) regular inference function.

Typical examples of weak regular inference functions, as developed in
\cite{C}, include moment-based inference functions
$\psi(x;\theta) = h(x) - \mathrm{E}_\theta[h(X)]$ for suitable test
functions $h \in \mathcal S(\mathbb R^k)$, sinusoidal inference
functions $\psi_c(x;\theta) = \sin(c(x - \theta))$, and
characteristic-function-based (transform) inference functions
$\psi_t(x;\theta) = e^{itx} - \phi_\theta(t)$. In each case the
inference function lives naturally in the Schwartz class or in a
closely related test-function space, and pairs with the
distributional representation of the model to yield the weak
sensitivity, weak variability, and weak Godambe information required
by Theorem~\ref{thm:main}.

\subsection{Weak Stein operators}\label{ssec:weak-stein}

In the classical setting, a \emph{Stein operator} for a distribution
$P_\theta$ is a linear operator $\mathcal A_\theta$ acting on a class of
test functions such that
\[
\mathrm{E}_\theta[\mathcal A_\theta g(X)] = 0
\]
for all $g$ in the class. For example, the Gaussian location model
$\mathcal N(\theta, \sigma^2)$ has the Stein operator
$\mathcal A_\theta g(x) = g'(x) - \frac{x - \theta}{\sigma^2} g(x)$.
The identity $\mathrm{E}_\theta[\mathcal A_\theta g(X)] = 0$
characterises $P_\theta$: if $Q$ satisfies
$\mathrm{E}_Q[\mathcal A_\theta g(X)] = 0$ for all test functions $g$,
then $Q = P_\theta$.

In the distributional framework, a \emph{weak Stein operator} is defined
analogously, but the characterising identity is required to hold only for
test functions in $\mathcal S(\mathbb R^k)$:
\[
\langle T_\theta, \mathcal A_\theta g \rangle = 0
\qquad \text{for all } g \in \mathcal S(\mathbb R^k).
\]
This formulation does not require the existence of a density. The connection
to inference functions is immediate: for each test function $g$, the map
$\psi_g(x;\theta) = \mathcal A_\theta g(x)$ defines an unbiased inference
function. A \emph{Stein discrepancy} between $Q$ and $P_\theta$ is then
\[
D_{\mathrm S}(Q, P_\theta)
= \sup_{g \in \mathcal G} |\mathrm{E}_Q[\mathcal A_\theta g(X)]|,
\]
or a quadratic variant thereof.

\subsection{Distributional moments}\label{ssec:distr-moments}

In the distributional framework, \emph{weak (distributional) moments}
of all orders are defined for any law admitting a distributional
representation: following the series convention, the weak moment of
order $r$ with kernel $\varphi$ is
\[
{}^{(\varphi)}m_r(\theta) \;:=\; \langle T_\theta, x^r\varphi\rangle,
\]
the kernel being recorded as a parenthesised presuperscript. The weak
moment is \emph{not} the ordinary moment: it is the $r$-th moment of
the law reweighted by the kernel, and it exists for every $r$ and
every $\theta$ regardless of the tail behaviour of $P_\theta$ (in
particular, all weak moments of the Cauchy and Student~$t$ families
are finite). When the ordinary moment $\mathrm{E}_\theta[X^r]$ does
exist, it is recovered from the weak moments as a limit along a
sequence of kernels converging pointwise to the constant function
$1$ on the support of $P_\theta$: if $(\varphi_n)$ are admissible
kernels with $0 \le \varphi_n \le 1$ and $\varphi_n \uparrow 1$
pointwise (for instance Gaussian kernels of increasing width), then
\[
{}^{(\varphi_n)}m_r(\theta)
= \int x^r \varphi_n(x)\, dP_\theta(x)
\;\longrightarrow\; \mathrm{E}_\theta[X^r],
\]
by dominated convergence, with dominating function $|x|^r \in
L^1(P_\theta)$. Full details, including the kernel-governed
moment-determinacy theory, are given in \cite{A}.

\section{The Godambe--Riemannian manifold}\label{sec:godambe-manifold}

\subsection{General proposition}\label{ssec:general-prop}

The following proposition establishes that the Godambe information
defines a Riemannian metric under natural conditions.

\begin{proposition}\label{prop:godambe-metric}
Let $\mathcal P=\{P_\theta:\theta\in\Theta\}$ be an identifiable
parametric statistical model (i.e., the map $\theta \mapsto P_\theta$
is injective),
where $\Theta \subseteq \mathbb R^p$ is an open set. Suppose there exists a
regular inference function $\psi(\cdot;\theta)$ with values in
$\mathbb R^q$, $q \ge p$
(in the sense of Definition~\ref{def:rif}) such that:
\begin{enumerate}
    \item the sensitivity matrix
    $S(\theta)=-\mathbb E_\theta[\partial_\theta \psi(X;\theta)]
    \in \mathbb R^{q\times p}$
    is well defined and smooth in $\theta$;

    \item the variability matrix
    $V(\theta)=\mathbb E_\theta[\psi(X;\theta)\psi(X;\theta)^\top]$
    is well defined, smooth, and positive definite for all $\theta\in\Theta$;

    \item $S(\theta)$ has full column rank $p$ for all
    $\theta\in\Theta$ (when $q=p$, this is non-singularity).
\end{enumerate}
Then the Godambe information matrix
\[
G(\theta)=S(\theta)^\top V(\theta)^{-1}S(\theta)
\]
defines a smooth Riemannian metric on $\Theta$.
\end{proposition}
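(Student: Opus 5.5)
The plan is to verify, pointwise in $\theta$, the three defining properties of a Riemannian metric on the open set $\Theta\subseteq\mathbb R^p$: symmetry, positive definiteness, and smooth dependence on $\theta$. Since $\Theta$ is an open subset of $\mathbb R^p$, it is a smooth manifold with the global chart given by the identity. A Riemannian metric is therefore nothing more than a smooth map $\theta\mapsto G(\theta)$ into the cone of symmetric positive-definite $p\times p$ matrices. No transition maps need to be checked.

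\emph{Symmetry.} By hypothesis~2, $V(\theta)$ is symmetric, since it is the pairing of $T_\theta$ with the symmetric matrix-valued probe $\psi\psi^\top$, understood in the sense of Lemma~\ref{lem:extension}. It is also positive definite, so $V(\theta)^{-1}$ exists and is symmetric positive definite. Then
\[
G(\theta)^\top = S(\theta)^\top V(\theta)^{-\top}S(\theta) = G(\theta).
\]

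\emph{Positive definiteness.} For $v\in\mathbb R^p$, I would write
\[
v^\top G(\theta)v=(S(\theta)v)^\top V(\theta)^{-1}(S(\theta)v)\ \ge 0,
\]
using that $V(\theta)^{-1}\succ 0$. Equality forces $S(\theta)v=0$, and the full column rank in hypothesis~3 then forces $v=0$. This is the step where $q\ge p$ and the rank condition are genuinely used. For $q>p$, $S$ is not square, so one cannot argue via determinants; the kernel argument above is the right route.

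\emph{Smoothness.} The entries of $S(\theta)$ and $V(\theta)$ are smooth by hypotheses~1--2. Inversion is a smooth (indeed real-analytic) map on the open set of invertible matrices, by Cramer's rule: $V^{-1}=\operatorname{adj}(V)/\det V$ with $\det V(\theta)>0$. Hence $\theta\mapsto V(\theta)^{-1}$ is smooth. $G$ is then a finite product of smooth matrix-valued functions, so it is smooth.

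There is no serious obstacle. The only point needing care is making sure that $S$ and $V$ are genuinely well-defined matrices, and that is exactly what hypotheses~1--2 and the admissibility clause of Definition~\ref{def:rif} supply. Identifiability of the model and weak unbiasedness are not needed for the metric property itself. I would remark that they serve only to interpret $G$ inferentially, through the weak Bartlett identity~\eqref{eq:weak-bartlett}. I would also note that the case $q=p$ reduces to $G=S^\top V^{-1}S$ with $S$ invertible, which matches Theorem~\ref{thm:main}.
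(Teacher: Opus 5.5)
Your proof is correct and follows essentially the same route as the paper: positive definiteness via $a^\top G(\theta)a=(S(\theta)a)^\top V(\theta)^{-1}(S(\theta)a)>0$ from the full column rank of $S(\theta)$, and smoothness from the smoothness of $S$, $V$ and of matrix inversion. Your explicit symmetry check, and your remark that identifiability and weak unbiasedness play no role in the metric property itself, are sound additions that the paper leaves implicit.
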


\begin{proof}
By assumption, $V(\theta)$ is positive definite and $S(\theta)$ has
full column rank
for all $\theta\in\Theta$. Hence, for any non-zero $a\in\mathbb R^p$,
$S(\theta)a \neq 0$ and
\[
a^\top G(\theta)a
=
(S(\theta)a)^\top V(\theta)^{-1}(S(\theta)a)>0.
\]
Thus $G(\theta)$ is positive definite. Smoothness follows from the smoothness of
$S(\theta)$ and $V(\theta)$ and from the smooth dependence of matrix inversion
on positive-definite matrices.
\end{proof}

\subsection{Embedding of the classical Fisher--Rao theory}\label{ssec:embedding}

Suppose in this subsection that a density $p_\theta$ exists, that the
score $u_\theta = \partial_\theta \log p_\theta$ is itself a valid
(weak) regular inference function, and that the Fisher information
$I(\theta) = \mathrm{E}_\theta[u_\theta u_\theta^\top]$ is finite and
positive definite. The relation between the Godambe metrics and the
Fisher metric is then an ordering, not merely a conformal rescaling.

\begin{proposition}[Loewner comparison with the Fisher metric]\label{prop:loewner}
Let $\psi$ satisfy the hypotheses of
Proposition~\ref{prop:godambe-metric}, and assume the Bartlett-type
interchange condition
\begin{equation}\label{eq:bartlett-score}
\mathrm{E}_\theta\bigl[\psi(X;\theta)\, u_\theta(X)^\top\bigr]
\;=\; S(\theta),
\end{equation}
obtained by differentiating the unbiasedness identity through the law.
Then
\[
G_\psi(\theta) \;\preceq\; I(\theta)
\qquad\text{(Loewner order)},
\]
with equality if and only if the score lies $P_\theta$-a.s.\ in the
linear span of the components of $\psi$; when $q=p$ this means
$\psi(X;\theta) = A(\theta)\,u_\theta(X)$ a.s.\ with $A(\theta)$
nonsingular. In particular $G_{u}(\theta) = I(\theta)$: the
Fisher--Rao metric is recovered exactly when the instrument is the
score.
\end{proposition}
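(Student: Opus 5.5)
The plan is to realise the comparison as a Schur-complement (multivariate Cauchy--Schwarz) inequality for the joint second-moment matrix of the instrument and the score. Fix $\theta$ and suppress it from the notation. Consider the stacked vector $Z = (\psi(X)^\top, u(X)^\top)^\top \in \mathbb R^{q+p}$. Both blocks are $P_\theta$-square integrable, since $V$ and $I$ are finite, and both have mean zero: $\psi$ by weak unbiasedness, and $u$ by the standing assumption that the score is itself a weak regular inference function. By \eqref{eq:bartlett-score} the second-moment matrix of $Z$ is
\[
\mathrm{E}_\theta[ZZ^\top] \;=\; \begin{pmatrix} V & S \\ S^\top & I \end{pmatrix} \;\succeq\; 0 .
\]
Since $V \succ 0$ by the hypotheses of Proposition~\ref{prop:godambe-metric}, the Schur complement of $V$ is positive semidefinite, that is, $I - S^\top V^{-1} S \succeq 0$. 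This is exactly $G_\psi \preceq I$.

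To obtain the equality characterisation I would write the Schur complement as a residual covariance. Put $R := u - S^\top V^{-1}\psi$. Using $\mathrm{E}[\psi\psi^\top]=V$ and $\mathrm{E}[\psi u^\top]=S$, a direct expansion gives
\[
\mathrm{E}_\theta[RR^\top] \;=\; I - S^\top V^{-1} S \;=\; I - G_\psi .
\]
Hence $G_\psi = I$ holds if and only if $\mathrm{E}[RR^\top]=0$, which holds if and only if $R=0$ $P_\theta$-a.s. In that case $u = S^\top V^{-1}\psi$ a.s., so the score lies in the linear span of the components of $\psi$.

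For the converse, suppose $u = B\psi$ a.s. for some $p\times q$ matrix $B$. Then $S^\top = \mathrm{E}[u\psi^\top] = BV$, so $B = S^\top V^{-1}$ and $R = 0$. Now specialise to $q=p$. There $S$ is nonsingular, so $u = S^\top V^{-1}\psi$ inverts to $\psi = A u$ with $A = V S^{-\top}$, which is nonsingular. Conversely, if $\psi = Au$ with $A$ nonsingular, then $u = A^{-1}\psi$ lies in the span of $\psi$.

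For the final claim I would take $\psi = u$. Condition \eqref{eq:bartlett-score} then reads $S = \mathrm{E}[uu^\top] = I$, and $V = I$ as well. It follows that $G_u = I\,I^{-1}I = I$. One should check that $u$ meets the hypotheses of Proposition~\ref{prop:godambe-metric}; this holds because $I$ is assumed finite and positive definite.

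The only delicate point is the legitimacy of the block moment matrix and of the residual identity. Both need only square integrability of $\psi$ and $u$ together with \eqref{eq:bartlett-score}, and all three are assumed. So I expect no real obstacle beyond keeping track of transposes in $S$ (which is $q\times p$). I would also remark that \eqref{eq:bartlett-score} is what converts the sensitivity into a covariance with the score, and that this is the only place where the density is used.
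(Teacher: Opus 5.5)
Your proposal is correct and is essentially the paper's argument: your residual $R = u - S^\top V^{-1}\psi$ is exactly $u_\theta - \Pi u_\theta$, the residual of the $L^2(P_\theta)$-projection of the score onto the span of the components of $\psi$, and the identity $\mathrm{E}_\theta[RR^\top] = I - G_\psi$ gives both the Loewner inequality and the equality characterisation, as in the paper. The preliminary Schur-complement step just repackages the same identity; also, for $\psi = u$, finiteness and positive definiteness of $I$ do not by themselves give the smoothness hypothesis of Proposition~\ref{prop:godambe-metric}, though smoothness plays no role in the pointwise comparison.
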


\begin{proof}
By \eqref{eq:bartlett-score}, the $L^2(P_\theta)$-projection of the
score onto the closed linear span of the components of $\psi$ is
$\Pi u_\theta = S(\theta)^\top V(\theta)^{-1} \psi(\cdot;\theta)$,
and its covariance matrix is $S^\top V^{-1} V V^{-1} S = G_\psi(\theta)$.
Hence
\[
I(\theta) - G_\psi(\theta)
= \mathrm{E}_\theta\!\left[
   (u_\theta - \Pi u_\theta)(u_\theta - \Pi u_\theta)^\top\right]
\;\succeq\; 0,
\]
with equality if and only if $u_\theta = \Pi u_\theta$
$P_\theta$-a.s.
\end{proof}

In the scalar case ($p=q=1$) the comparison is conformal:
$G_\psi(\theta) = \mathrm{ARE}(\psi, \theta)\cdot I(\theta)$ with
conformal factor the asymptotic relative efficiency
\[
\mathrm{ARE}(\psi, \theta)
= \frac{S(\theta)^2}{V(\theta)\, I(\theta)} \;\le\; 1 .
\]
As an illustration, consider the sinusoidal inference function
$\psi_c(x;\theta) = \sin(c(x-\theta))$ in the Gaussian location model
$\mathcal N(\theta,\sigma^2)$, with $\sigma^2$ known. Since
$\mathrm{E}_\theta[e^{ic(X-\theta)}] = e^{-c^2\sigma^2/2}$,
\begin{align*}
S(\theta) &= c\,\mathrm{E}_\theta[\cos(c(X-\theta))]
           = c\, e^{-c^2\sigma^2/2},\\
V(\theta) &= \tfrac12\bigl(1 - \mathrm{E}_\theta[\cos(2c(X-\theta))]\bigr)
           = \tfrac12\bigl(1 - e^{-2c^2\sigma^2}\bigr),
\end{align*}
whence
\[
G_c(\theta) = \frac{S(\theta)^2}{V(\theta)}
= \frac{2c^2\, e^{-c^2\sigma^2}}{1 - e^{-2c^2\sigma^2}}
= \frac{c^2}{\sinh(c^2\sigma^2)} .
\]
Writing $w = c^2\sigma^2 = c^2/I_F$,
\[
\mathrm{ARE}(c) = \frac{w}{\sinh(w)} \;\uparrow\; 1
\qquad (c \to 0),
\]
in agreement with the location--scale computations of
\cite[Section~8.5]{C}: full Fisher efficiency is
approached in the small-frequency limit. The extension provided by
Theorem~\ref{thm:main} is non-trivial precisely for those models where
the score is unavailable, or fails to be a sufficiently regular
inference function,
so that the top of the Loewner order is vacant.

\section{Examples}\label{sec:examples}

The four examples presented in this section are chosen to illustrate
that the extension of the class of Riemannian statistical models
claimed in Theorem~\ref{thm:main} is genuine---each example lies
outside the reach of the classical Fisher--Rao construction, and
each does so for a \emph{different} structural reason. Together they
delimit four qualitatively distinct obstructions that the
distributional framework is able to handle.

\begin{itemize}
\item \textbf{The uniform scale model
(Section~\ref{ssec:uniform})} is the prototypical example of a model
with \emph{parameter-dependent support}. The Fisher--Rao machinery
fails here at the most basic level: the support of $P_\theta$ changes
with $\theta$, the score function is not unbiased, and the regularity
conditions of classical information geometry are violated from the
start. A simple first-moment inference function, however, is a weak
regular inference function and yields a smooth Godambe--Riemannian
metric on $(0,\infty)$.

\item \textbf{The shifted exponential model
(Section~\ref{ssec:shifted-exp})} also exhibits
parameter-dependent support, but is included for a different reason:
it is a model in which a \emph{transform-based} weak regular
inference function is particularly transparent. The characteristic
function is available in closed form and gives rise to bounded
sinusoidal inference functions whose weak sensitivity and variability
are easy to compute, illustrating how transform-based weak regular
inference functions generate a Godambe--Riemannian structure on a
model that is singular from the Fisher--Rao point of view.

\item \textbf{The Cantor location model
(Section~\ref{ssec:cantor})} removes the very possibility of a
likelihood. The Cantor distribution is singular continuous, so no
observation density exists; more radically, the location family it
generates admits \emph{no dominating $\sigma$-finite measure at all}
(Proposition~\ref{prop:cantor-undominated}), so likelihood ratios,
scores, and Fisher information are not merely irregular but
nonexistent. Bounded transform-based instruments and moment
instruments nevertheless yield closed-form Godambe metrics, and the
centred model is a symmetric location--scale family to which the
automatic-separation result of
Section~\ref{ssec:automatic-orthogonality} applies verbatim.

\item \textbf{The stratified finite mixture
(Section~\ref{ssec:mixture})} is a qualitatively different
obstruction. The model---observations stratified into groups, with the
group membership of some observations known only up to probabilities
not in $\{0,1\}$---is \emph{dominated} by Lebesgue measure and
admits a formally defined, smooth, everywhere positive likelihood, so
the Fisher--Rao construction is not ruled out by non-dominance. As
proved in \cite{Labouriau2022}, however, the score function of the
mixed model is a \emph{biased} inference function whenever a genuine
mixture is present, and the maximum likelihood estimator is then
inconsistent: the likelihood-based route to a Riemannian metric fails
inferentially. A pair of \emph{simultaneously unbiased} moment-based
weak inference functions---unbiased under every component law, hence
under every stratification---recovers a Godambe--Riemannian metric on
the parameter space, illustrating that the extension claimed here
covers even dominated models where the classical construction is
formally available but inferentially defective.
\end{itemize}

In all four cases the construction is the same in spirit: one
exhibits a weak regular inference function (or a weak Stein
representation that generates one), computes the weak sensitivity
and variability, and reads off the Godambe information as a
Riemannian metric tensor on the parameter space.

\begin{remark}[Student $t$ and Cauchy: the moment obstruction]
\label{rem:cauchy-sinusoid}
Beyond the four examples treated in detail, the Student~$t$ families
with small degrees of freedom illustrate the moment obstruction: for
$\nu \le 2$ the variance does not exist, and for $\nu \le 1$ not even
the mean. The sinusoidal Godambe information is well defined for all
$\nu > 0$, and for the Cauchy location family ($\nu = 1$), where
$\phi_0(c) = e^{-|c|}$, it is available in closed form:
$S(\theta) = c\,e^{-|c|}$, $V(\theta) =
\tfrac12\bigl(1-e^{-2|c|}\bigr)$, and
\[
G_c(\theta) \;=\; \frac{2c^2 e^{-2|c|}}{1-e^{-2|c|}},
\]
a flat metric on $\Theta = \mathbb R$, consistent with the
location-scale computation of \cite[Section~8.5]{C}.

It is worth stressing what is, and what is not, obstructed here. The
Cauchy family is \emph{regular in the classical sense}: writing the
density with location $\mu$ and scale $\sigma>0$, the scores
\[
s_\mu(x) = \frac{2(x-\mu)}{\sigma^2+(x-\mu)^2},
\qquad
s_\sigma(x) = \frac{(x-\mu)^2-\sigma^2}{\sigma\bigl[\sigma^2+(x-\mu)^2\bigr]},
\]
are bounded in absolute value by $1/\sigma$---in particular they are
square integrable, and they belong to the admissible class $\mathcal
G_{\mathrm b}$ of Lemma~\ref{lem:extension}---the Fisher information
is finite and positive definite,
\[
I(\mu,\sigma)
=
\mathrm{diag}\bigl(1/(2\sigma^2),\,1/(2\sigma^2)\bigr),
\]
and the maximum likelihood estimator is consistent and asymptotically
normal with variance $I(\theta)^{-1}$.\footnote{The regularity is
genuine, but the likelihood surface is not concave: the score equation
is a polynomial equation of degree $2n-1$, and the number of
extraneous local maxima converges in distribution to a Poisson
variable with mean $1/\pi$ \cite{Reeds1985}, so that the probability
of a unique root tends to $e^{-1/\pi}\approx0.73$ rather than to one.
The asymptotic statement therefore concerns the global maximiser
(equivalently, the consistent root), and a $\sqrt n$-consistent
starting value is needed in practice---the sample median, say, and not
the sample mean, which is not consistent here. In finite samples the
observed information is preferable to $I(\theta)$ as a measure of
accuracy, the configuration statistic being ancillary in this location
model \cite{EfronHinkley1978}.} The obstruction in the Cauchy family
is thus the absence of \emph{moments}, not of the score: it is the
moment-based instruments, and only those, that are unavailable.

Because the Fisher information exists, this family also illustrates
the Loewner comparison of Proposition~\ref{prop:loewner} in a
heavy-tailed setting. At unit scale, $G_c(\theta)\le\tfrac12=I(\theta)$
for every frequency, and the maximal sinusoidal efficiency is
$\sup_c G_c(\theta)/I(\theta)\approx0.65$, attained near $|c|\approx
0.8$; for general $\sigma$ the efficiency depends on $c$ and $\sigma$
only through $\sigma c$, so the optimal frequency is $|c|\approx
0.8/\sigma$. Similarly, in
models with degenerate Fisher information (mixtures at boundary
points, non-identifiable components), a suitably chosen instrument
may retain a nonsingular Godambe matrix, preserving the Riemannian
structure where the Fisher--Rao one collapses.
\end{remark}

\begin{remark}[Closure of the instrument family, suggested by
S.~Zabolotnii]
\label{rem:cauchy-closure}
The deficiency recorded in Remark~\ref{rem:cauchy-sinusoid} is a
property of the \emph{instrument}, not of the framework, and it
disappears as the instrument family is enriched. Take frequencies
$0<c_1<\dots<c_m$ and the vector instrument
$\psi=\bigl(\sin(c_j(x-\mu))\bigr)_{j=1}^{m}$. Its sensitivity and
variability are available in closed form,
\[
S_j = c_j e^{-\sigma c_j},
\qquad
V_{jk} = \tfrac12\Bigl(e^{-\sigma|c_j-c_k|}
                      -e^{-\sigma(c_j+c_k)}\Bigr),
\]
so the relative efficiency $\kappa_m = S^\top V^{-1}S/I(\theta)$ of
the optimally weighted combination is exactly computable. Optimising
over the frequencies at $\sigma=1$ gives
\[
\kappa_1,\dots,\kappa_7
= 0.648,\;0.820,\;0.891,\;0.927,\;0.948,\;0.961,\;0.969,
\]
and $\kappa_m\uparrow1$. The limit is a completeness statement: if $f$
is odd, $f\in L^2(P_\mu)$ and $\mathrm{E}[f(X)\sin(c(X-\mu))]=0$ for
every $c>0$, then $h=f\,p_\mu\in L^1$ has vanishing Fourier
transform---the cosine part by oddness, the sine part by
hypothesis---so $h=0$ and hence $f=0$, the Cauchy density being
strictly positive. The sinusoidal instruments therefore span the odd
subspace of $L^2_0(P_\mu)$ densely, and the Cauchy location score is
odd; by Proposition~\ref{prop:loewner} the Fisher metric is the
supremum of the Godambe metrics over this family, approached but---for
every finite $m$, since no finite sinusoidal combination equals the
score---not attained.

Two qualifications should be kept in mind. The construction is an
oracle one: the optimal frequencies scale with the unknown $\sigma$,
and the optimal weights involve the population quantities $S$ and
$V$. And the Gram matrix becomes ill conditioned as the family grows
($\operatorname{cond}(V)\approx23$, $117$ and $1.4\times10^{3}$ for
equally spaced grids with $m=10$, $40$ and $160$), the ill-posedness
familiar from the continuum-of-moment-conditions theory
\cite{CarrascoFlorens2000}; the phenomenon itself is the
characteristic-function counterpart of the classical efficiency result
for empirical-characteristic-function procedures
\cite{FeuervergerMcDunnough1981}.
\end{remark}

\subsection{The uniform distribution}\label{ssec:uniform}

\begin{proposition}\label{prop:uniform}
Consider the uniform scale model
\[
X \sim \mathrm{Unif}(0,\theta), \qquad \theta>0.
\]
This model is not regular in the classical Fisher sense, since the support depends
on the parameter and the formal score function is not unbiased. However, the
function
\[
\psi(x;\theta)=x-\frac{\theta}{2}
\]
is a regular inference function
(in the sense of Definition~\ref{def:rif}; the instrument lies in the
class $\mathcal G_{\mathrm{mom}}$ of Lemma~\ref{lem:extension}, all
moments of $\mathrm{Unif}(0,\theta)$ being finite).
Its sensitivity and variability are
\[
S(\theta)=\frac12,
\qquad
V(\theta)=\frac{\theta^2}{12},
\]
and the corresponding Godambe information is
\[
G(\theta)=\frac{3}{\theta^2}.
\]
Hence the parameter space $(0,\infty)$ carries the Riemannian metric
\[
g_\theta=\frac{3}{\theta^2}\,d\theta^2.
\]
\end{proposition}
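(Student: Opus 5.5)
The plan is to verify the hypotheses of Definition~\ref{def:rif} and Proposition~\ref{prop:godambe-metric} directly for the uniform law, and then read off $G$. First I would justify the negative claim about the classical score. The density is $p_\theta(x)=\theta^{-1}\mathbf 1_{(0,\theta)}(x)$, so the formal score on the support is $u_\theta(x)=-1/\theta$. Its expectation is $-1/\theta\neq 0$, so the score is not unbiased. The underlying reason is that $\partial_\theta\int_0^\theta p_\theta\,dx$ picks up the boundary term $p_\theta(\theta)=1/\theta$, so differentiation under the integral sign fails.

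Next I would check the three conditions of Definition~\ref{def:rif} for $\psi(x;\theta)=x-\theta/2$.
\begin{itemize}
\item \emph{Admissibility.} $\psi(\cdot,\theta)$ is a polynomial and $P_\theta$ has compact support, so $\psi\in\mathcal G_{\mathrm{mom}}$ and all pairings are defined by Lemma~\ref{lem:extension}.
\item \emph{Unbiasedness.} This is $\mathrm{E}_\theta[X]=\theta/2$.
\item \emph{Local identification.} Since $\partial_\theta\psi=-1/2\neq0$, the equation $\mathrm{E}_P[X]-\theta/2=0$ has the unique solution $\theta=2\,\mathrm{E}_P[X]$. This holds for any $P$ with a finite first moment, so identification is even global.
\end{itemize}

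Then I would compute the two matrices. Since $\partial_\theta\psi(x;\theta)=-1/2$ is constant, $S(\theta)=1/2$. For the variability, $V(\theta)=\mathrm{E}_\theta[(X-\theta/2)^2]=\mathrm{Var}_\theta(X)=\theta^2/12$, using $\mathrm{E}_\theta[X^2]=\theta^2/3$. As a consistency check with the weak Bartlett identity~\eqref{eq:weak-bartlett}, $\theta\mapsto\langle T_\theta,g\rangle=\theta^{-1}\int_0^\theta g$ is $C^1$ on $(0,\infty)$ for continuous $g$. Moreover, $\partial_\theta\langle T_\theta,\psi(\cdot;\theta')\rangle$ evaluated at $\theta'=\theta$ equals $-\theta^{-2}\int_0^\theta(x-\theta/2)\,dx+\theta^{-1}(\theta-\theta/2)=1/2$, which agrees with $S$. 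Note that the boundary term is exactly what supplies the sensitivity here.

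Finally I would apply Proposition~\ref{prop:godambe-metric} with $p=q=1$. The function $S=1/2$ is constant, hence smooth and nonzero, and $V=\theta^2/12$ is smooth and strictly positive on $(0,\infty)$. Therefore $G(\theta)=S^2/V=(1/4)(12/\theta^2)=3/\theta^2$ is a smooth positive metric, namely $g_\theta=3\theta^{-2}d\theta^2$. The only step needing care is the first, showing that the score is not unbiased and so pinning down exactly why the classical theory fails. Everything else is elementary moment computation.
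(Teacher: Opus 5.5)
Your proposal is correct and follows the same route as the paper's proof: unbiasedness from $\mathrm{E}_\theta[X]=\theta/2$, $S=1/2$ from the constant derivative $\partial_\theta\psi=-1/2$, $V=\mathrm{Var}_\theta(X)=\theta^2/12$, and hence $G=S^2/V=3/\theta^2$. Your extra checks are all correct and only make explicit what the paper leaves implicit: the bias $-1/\theta$ of the formal score, local identification, and the weak-Bartlett verification in which the boundary term $\theta^{-1}\psi(\theta;\theta)=1/2$ supplies the sensitivity.
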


\begin{proof}
Since $\mathbb E_\theta[X]=\theta/2$, we have
$\mathbb E_\theta[\psi(X;\theta)]=0$.
Moreover,
$\partial_\theta \psi(x;\theta)=-\frac12$,
so
$S(\theta) = -\mathbb E_\theta[\partial_\theta \psi(X;\theta)] = \frac12$.
Also,
$V(\theta) = \mathbb E_\theta[\psi(X;\theta)^2] = \mathrm{Var}_\theta(X)
= \frac{\theta^2}{12}$.
Therefore
$G(\theta)=\frac{S(\theta)^2}{V(\theta)}=\frac{3}{\theta^2}$,
which is smooth and strictly positive for all $\theta>0$.
\end{proof}

\begin{remark}\label{rem:uniform-log}
If one introduces the logarithmic parametrisation $\eta=\log\theta$, the metric becomes
$g=3\,d\eta^2$,
so the model is isometric to a Euclidean line up to scale. This illustrates that
the Godambe metric may induce a natural geometric structure even in a model for
which the classical Fisher construction is not available.
\end{remark}

\begin{remark}[Transform-based metric for the uniform model]\label{rem:uniform-transform}
In addition to the moment-based inference function, one may construct weak
inference functions from the characteristic function. For $t \neq 0$, define
\[
\psi_t(x;\theta)=e^{itx}-\phi_\theta(t),
\qquad
\phi_\theta(t)=\frac{e^{it\theta}-1}{it\theta}.
\]
Then $\mathbb E_\theta[\psi_t(X;\theta)]=0$, and the corresponding Godambe
information is
\[
G_t(\theta)
=
\frac{|\partial_\theta \phi_\theta(t)|^2}
{1-|\phi_\theta(t)|^2}.
\]
This defines a family of Riemannian metrics on $(0,\infty)$: both
ingredients are nonzero for every $\theta>0$ and $t\neq 0$, since
$|\phi_\theta(t)| = 2|\sin(t\theta/2)|/(|t|\theta) < 1$, and
$\partial_\theta \phi_\theta(t) = 0$ would require
$e^{it\theta}(1-it\theta) = 1$, which is impossible because
$|e^{it\theta}(1-it\theta)| = \sqrt{1+t^2\theta^2} > 1$. The
instrument lies in $\mathcal G_{\mathrm b}$.

Moreover, as $t \to 0$, one recovers the moment-based metric
$G_t(\theta) \to 3/\theta^2$.
Thus the moment-based geometry appears as a low-frequency limit of the
transform-based geometry.
\end{remark}

\subsection{Shifted exponential model}\label{ssec:shifted-exp}

The shifted exponential model provides a second example supporting
Theorem~\ref{thm:main}. Like the uniform model, it is non-regular in the classical
likelihood sense because the support depends on the parameter. Unlike the
uniform model, it is naturally adapted to transform-based weak inference
functions, and therefore illustrates more directly the role of distributional
representations.

Consider the model
\[
X=\theta+Y,
\qquad
Y\sim \mathrm{Exp}(1),
\]
so that
$f(x;\theta)=e^{-(x-\theta)}\mathbf 1_{(x\ge \theta)}$,
$\theta\in\mathbb R$.
The support depends on $\theta$, and the formal score function is not unbiased:
for $x>\theta$, $\partial_\theta \log f(x;\theta)=1$,
so $\mathbb E_\theta[\partial_\theta \log f(X;\theta)] = 1 \neq 0$.

\paragraph{A moment-based inference function.}
Since $\mathbb E_\theta[X]=\theta+1$, a natural inference function is
$\psi(x;\theta)=x-\theta-1$.
Then $\mathbb E_\theta[\psi(X;\theta)]=0$,
$S(\theta)=1$, and $V(\theta)=1$.
Therefore $G(\theta)=1$,
and the parameter space $\mathbb R$ carries the flat Riemannian metric
$g_\theta=d\theta^2$.

\paragraph{A transform-based weak inference function.}
The characteristic function of the model is
$\phi_\theta(t) = e^{it\theta}/(1-it)$.
For $t\neq 0$, define $\psi_t(x;\theta)=e^{itx}-\phi_\theta(t)$.
Then $\mathbb E_\theta[\psi_t(X;\theta)]=0$.
Since $\partial_\theta \phi_\theta(t)=it\,\phi_\theta(t)$,
we obtain
$|\partial_\theta \phi_\theta(t)|^2 = t^2/(1+t^2)$,
$|\phi_\theta(t)|^2=1/(1+t^2)$, and
$V_t(\theta) = t^2/(1+t^2)$.
Hence $G_t(\theta) = 1$.

Thus the transform-based weak inference function induces exactly the same
Riemannian metric as the moment-based inference function. (Both
instruments are admissible: $\psi \in \mathcal G_{\mathrm{mom}}$, the
exponential having all moments, and $\psi_t \in \mathcal G_{\mathrm
b}$.)

\begin{remark}\label{rem:shifted-exp-intrinsic}
The shifted exponential model is non-regular in the classical Fisher sense, yet
it admits a smooth positive-definite metric induced by weak inference functions.
Moreover, two different inference functions---one moment-based and one
transform-based---lead to the same Godambe metric. This suggests that, while
the Godambe geometry may depend on the chosen inference function in general,
certain models may possess a more intrinsic weak geometric structure.
\end{remark}

\subsection{A fractal model: the Cantor location family}\label{ssec:cantor}

Let $C = \sum_{k\ge 1} D_k 3^{-k}$, where the $D_k$ are independent
and uniform on $\{0,2\}$: the \emph{Cantor distribution} $\mu_C$, the
natural uniform measure on the middle-thirds Cantor set $K \subset
[0,1]$ (of Hausdorff dimension $\log 2/\log 3$; see
\cite{Falconer2003}). The law $\mu_C$ is singular continuous, yet has
moments of every order, with
\[
\mathrm{E}[C] = \tfrac12,
\qquad
\mathrm{Var}(C) = \tfrac18,
\qquad
\phi_C(t) = \mathrm{E}\bigl[e^{itC}\bigr]
= e^{it/2}\prod_{k\ge 1}\cos\!\bigl(t\,3^{-k}\bigr).
\]
The centred variable $Z := C - \tfrac12$ is symmetric, with real,
even, entire characteristic function
\[
\phi_Z(t) = \prod_{k\ge 1}\cos\!\bigl(t\,3^{-k}\bigr),
\]
which is strictly positive on the window $|t| < 3\pi/2$ (each factor
$\cos(t 3^{-k})$ is positive there, the constraint being binding for
$k=1$), strictly decreasing on $(0, 3\pi/2)$ (since $(\log\phi_Z)'(t)
= -\sum_k 3^{-k}\tan(t 3^{-k}) < 0$ there), and satisfies the exact
renormalisation identity
\begin{equation}\label{eq:cantor-renorm}
\phi_Z(3t) = \cos(t)\,\phi_Z(t),
\end{equation}
the Fourier image of the triadic self-similarity $\mu_C =
\tfrac12\bigl(S_{0\#}\mu_C + S_{1\#}\mu_C\bigr)$, $S_0(x) = x/3$,
$S_1(x) = (x+2)/3$. The zero set of $\phi_Z$ is $\{3^k(2m+1)\pi/2 : k
\ge 1,\ m \in \mathbb Z\}$: the frequency-selection caveat of
Section~\ref{sec:introduction} is genuinely binding here.

Consider the location model
\[
X = \theta + C, \qquad \theta \in \Theta = \mathbb R,
\qquad T_\theta = \delta_\theta * \mu_C .
\]

\begin{proposition}[No likelihood exists]\label{prop:cantor-undominated}
\mbox{}
\begin{enumerate}
\item Each $P_\theta$ is singular continuous: no density with respect
to Lebesgue measure (nor with respect to any fixed $\sigma$-finite
measure, uniformly in $\theta$, by (2)) exists.
\item For every $\sigma$-finite measure $\nu$ on $\mathcal B(\mathbb
R)$, the set $\{\theta : P_\theta \ll \nu\}$ is Lebesgue-null. In
particular, the family $\{P_\theta : \theta \in \mathbb R\}$ is
undominated, and no choice of reference measure produces a likelihood
on any parameter set of positive Lebesgue measure.
\end{enumerate}
\end{proposition}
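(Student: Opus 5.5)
Part (1) is classical. The Cantor measure $\mu_C$ is continuous, so it has no atoms; this follows because each point has $\mu_C$-mass at most $2^{-n}$ for every $n$, being covered by one of the $2^n$ level-$n$ triadic intervals. It is also concentrated on the Lebesgue-null Cantor set $K$. Translation preserves both properties, so each $P_\theta=\delta_\theta*\mu_C$ is singular continuous and has no Lebesgue density. The parenthetical claim, that there is no density with respect to any fixed $\sigma$-finite measure uniformly in $\theta$, is then deduced from (2).

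For (2) I will use a Fubini/convolution argument. Fix a $\sigma$-finite $\nu$ and set $A:=\{\theta : P_\theta\ll\nu\}$. First I reduce to $\nu$ finite, by replacing it with an equivalent probability measure. Next I decompose $\nu=\nu_a+\nu_s$ into its absolutely continuous and singular parts with respect to Lebesgue measure $\lambda$. Let $N$ be a Borel $\lambda$-null set carrying $\nu_s$.

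If $\theta\in A$, then $P_\theta\ll\nu$. Since $P_\theta$ is singular to $\lambda$, it gives zero mass to any set carrying $\nu_a$. Hence $P_\theta$ is concentrated on $N$, that is, $\mu_C(N-\theta)=1$. It therefore suffices to show that $\lambda\{\theta:\mu_C(N-\theta)>0\}=0$ for every Lebesgue-null Borel $N$. This follows from Tonelli:
\[
\int_{\mathbb R}\mu_C(N-\theta)\,d\theta=\int\!\!\int \mathbf 1_N(\theta+c)\,d\theta\,d\mu_C(c)=\int\lambda(N-c)\,d\mu_C(c)=0,
\]
so $\mu_C(N-\theta)=0$ for $\lambda$-a.e.\ $\theta$. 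Thus $A$ is contained in a Lebesgue-null set. Measurability of $\theta\mapsto\mu_C(N-\theta)$ is standard, as the Borel joint measurability of $(\theta,c)\mapsto\mathbf 1_N(\theta+c)$. I will state that $A$ is contained in a null set, which avoids having to prove that $A$ itself is measurable.

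The remaining consequences are immediate. If $\nu$ dominated the family, then $A=\mathbb R$, contradicting nullity. If a likelihood existed on a parameter set $B$ of positive measure, then $B\subseteq A$, again a contradiction. The same reasoning gives the parenthetical clause in (1).

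The main obstacle is the reduction from ``$P_\theta\ll\nu$'' to ``$P_\theta$ is concentrated on a fixed $\lambda$-null set''. The key point is that the set $N$ is chosen from $\nu$ alone, independent of $\theta$. The Lebesgue decomposition supplies exactly this, and once it is in place the Tonelli computation finishes the argument.
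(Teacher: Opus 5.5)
Your proof is correct, but it runs the Tonelli argument in the opposite direction from the paper's. The paper never decomposes $\nu$. It uses the null set supplied by the model: $P_\theta\ll\nu$ together with $P_\theta(\theta+K)=1$ forces $\nu(\theta+K)>0$. Tonelli on $\lambda\otimes\nu$ then gives
\[
\int_a^b\nu(\theta+K)\,d\theta=\int\lambda\bigl([a,b]\cap(z-K)\bigr)\,\nu(dz)=0 .
\]
You instead extract a null set $N$ from the reference measure via the Lebesgue decomposition, show that every dominated $P_\theta$ must live on $N$, and apply Tonelli to $\lambda\otimes\mu_C$.

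The paper's route is shorter: one implication, with no decomposition and no singular-versus-absolutely-continuous step. Your reduction to finite $\nu$ is also unnecessary, since the Lebesgue decomposition already holds for $\sigma$-finite measures.

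Your route isolates a different structural fact. Your Tonelli computation says exactly that the Lebesgue mixture $\int P_\theta\,d\theta=\lambda*\mu_C=\lambda$ charges no Lebesgue-null set. So your argument applies verbatim to any measurable family of Lebesgue-singular laws whose $\theta$-mixture is absolutely continuous, not only to location families. The paper's argument instead uses that all the $P_\theta$ are carried by translates of one fixed null set.

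One sentence needs repair. ``$P_\theta$ gives zero mass to any set carrying $\nu_a$'' is false as written, since $\mathbb R$ itself carries $\nu_a$. The true statement is this: on $N^c$ one has $\nu=\nu_a\ll\lambda$, so $P_\theta$ restricted to $N^c$ is both absolutely continuous and singular with respect to $\lambda$. It is therefore zero, which gives $P_\theta(N)=1$.
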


\begin{proof}
(1) is classical: $\mu_C(K) = 1$ with $\lambda(K) = 0$, and $\mu_C$
has no atoms. For (2), suppose $P_\theta \ll \nu$; since
$P_\theta(\theta + K) = 1$, this forces $\nu(\theta + K) > 0$. Fix a
bounded interval $[a,b]$. The map $(\theta, z) \mapsto
\mathbf 1\{z - \theta \in K\}$ is jointly measurable ($K$ is closed),
and both $\lambda$ and $\nu$ are $\sigma$-finite, so Tonelli's theorem
gives
\[
\int_a^b \nu(\theta + K)\, d\theta
= \int_{\mathbb R} \lambda\bigl([a,b] \cap (z - K)\bigr)\, \nu(dz)
= 0,
\]
because $\lambda(z - K) = \lambda(K) = 0$ for every $z$. Hence
$\nu(\theta + K) = 0$ for Lebesgue-almost every $\theta \in [a,b]$,
and since $[a,b]$ was arbitrary, $P_\theta \ll \nu$ can hold only on
a Lebesgue-null set of $\theta$.
\end{proof}

The obstruction is not curable by a cleverer choice of reference
measure: within any parameter window, densities, likelihood ratios,
and scores fail to exist simultaneously for almost every $\theta$.
The Fisher--Rao construction is void from the start---in a stronger
sense than in the support-dependent examples, where at least each
single law had a density. The weak framework, by contrast, applies
verbatim.

\paragraph{Moment instrument.}
Since $C$ has compact support, all moments exist and
$\psi(x;\theta) = x - \theta - \tfrac12 \in \mathcal G_{\mathrm{mom}}$
is a weak regular inference function, with
\[
S(\theta) = 1, \qquad V(\theta) = \mathrm{Var}(C) = \tfrac18,
\qquad G(\theta) \equiv 8 :
\]
a flat metric on $\mathbb R$, with geodesic distance
$d(\theta,\theta') = 2\sqrt 2\,|\theta - \theta'|$.

\paragraph{Transform instrument.}
For $0 < |t| < 3\pi/2$, the residual $\psi_t(x;\theta) = e^{itx} -
\phi_\theta(t) \in \mathcal G_{\mathrm b}$, with $\phi_\theta(t) =
e^{it\theta}\phi_C(t)$, gives $|\partial_\theta\phi_\theta(t)|^2 =
t^2\phi_Z(t)^2$ and $V_t = 1 - \phi_Z(t)^2 > 0$, so
\begin{equation}\label{eq:cantor-Gt}
G_t(\theta) \;=\; \frac{t^2\,\phi_Z(t)^2}{1 - \phi_Z(t)^2},
\end{equation}
a family of flat metrics indexed by the frequency. As $t \to 0$,
$\phi_Z(t) = 1 - t^2/16 + O(t^4)$ yields $G_t(\theta) \to 8$: the
moment metric is the low-frequency limit of the transform metrics,
exactly as in the uniform model
(Remark~\ref{rem:uniform-transform}).

\begin{remark}[Self-similar geometry]\label{rem:cantor-selfsimilar}
The renormalisation identity \eqref{eq:cantor-renorm} transfers to
the metric family:
\[
G_{3t}(\theta)
= \frac{9t^2 \cos^2(t)\,\phi_Z(t)^2}{1 - \cos^2(t)\,\phi_Z(t)^2}.
\]
The frequency-indexed family of Godambe metrics inherits the triadic
self-similarity of the underlying law: the geometry read at frequency
$3t$ is an explicit dressing of the geometry read at frequency $t$.
More generally, the parameter could be placed \emph{inside} the
generating iterated-function system---contraction ratio,
translations, branching probability---yielding models whose
parameters describe the geometric mechanism generating the law
rather than the law itself; weak estimating equations extend to that
setting, which we leave for separate work.
\end{remark}

\begin{remark}[Location--scale extension and automatic separation]
\label{rem:cantor-locscale}
The two-parameter family $X = \mu + \sigma Z$ is a symmetric
location--scale model with real, entire, non-constant $\phi_Z$, so
Proposition~\ref{prop:automatic-orthogonality} applies verbatim: for
frequencies with $|c\sigma|, |d\sigma| \in (0, 3\pi/2)$ one has
$\phi_Z(c\sigma) > 0$ and $\phi_Z'(d\sigma) < 0$, and the sine/cosine
probes produce a block-diagonal Godambe metric---exact location--scale
inferential separation in a family that possesses no likelihood at
all. (Positivity of the variability uses $|\phi_Z(t)| < 1$ for $t
\neq 0$, which holds because $\mu_C$ is non-atomic, hence non-lattice.)
\end{remark}

\begin{remark}[Instrument-relative information]
\label{rem:cantor-instrument}
From a single observation, $\theta$ is confined to the Lebesgue-null
set $x - K$: the identification structure of the model is far
stronger than any single $\sqrt n$-type quantification can express.
The Godambe metric measures the information extracted by the
\emph{chosen instrument}, not an intrinsic bound; in an undominated
model there is no Fisher information to saturate, and different
instruments read genuinely different geometries; indeed the family
of Godambe metrics has no maximal element
(Remark~\ref{rem:no-max}).
\end{remark}

\subsection{Stratified finite mixtures and the failure of the score route}\label{ssec:mixture}

A different type of obstruction is illustrated by the stratified finite
mixed models of \cite{Labouriau2022}. Let $\mathcal P_0 = \{P_\vartheta
: \vartheta \in \Theta_0\}$ be a regular dominated family (the
\emph{basic model}), and suppose the observations $X_1,\dots,X_I$ are
stratified into $K$ groups: each $X_i$ is drawn from one, and only
one, of $K$ distinct component laws $P_{\vartheta_1}, \dots,
P_{\vartheta_K}$. The group memberships are not fully observed;
instead, a known \emph{mixing matrix} $\Pi = [\pi_{ik}]$, with
$\pi_{ik} \in [0,1]$ and $\sum_k \pi_{ik} = 1$, records the
probability that observation $i$ belongs to group $k$, and the
\emph{mixed model} attributes to $X_i$ the density $\sum_k \pi_{ik}\,
p(\cdot\,;\vartheta_k)$. If all $\pi_{ik} \in \{0,1\}$, the group
structure is fully observed (a classification factor) and the model
\emph{contains no mixture}; if some row has entries in $(0,1)$, the
model \emph{contains a genuine mixture}.

The key result of \cite{Labouriau2022} (Proposition~1 there) is that
the score function of the mixed model is an unbiased inference
function \emph{if, and only if, the model contains no mixture}; in the
presence of a genuine mixture the score is biased---the expectation of
the score for $\vartheta_k$, evaluated under the component law
actually generating the observation, is strictly negative---and,
under further mild regularity, the maximum likelihood estimator is
inconsistent. The model is dominated, with a smooth, everywhere
positive likelihood; the failure of the Fisher route is purely
inferential. From the geometric point of view, this shows that one
should not identify the existence of a likelihood or a formal score
with the existence of a meaningful information metric: the relevant
structure is a regular inference function with well-defined
sensitivity and variability.

\paragraph{Stratification-agnostic instruments.}
The distributional framework suggests where to look. Because the
memberships are unknown, an instrument for $\theta =
(\vartheta_1,\dots,\vartheta_K)$ should be \emph{simultaneously
weakly unbiased under every component}:
\begin{equation}\label{eq:simultaneous-unbiased}
\langle T_{\vartheta_k},\, \psi_j(\cdot\,;\theta)\rangle = 0,
\qquad k = 1,\dots,K,\ j = 1,\dots,q .
\end{equation}
An inference function satisfying
\eqref{eq:simultaneous-unbiased} is unbiased under \emph{every} law in
the convex hull $\mathrm{co}\{P_{\vartheta_1},\dots,P_{\vartheta_K}\}$,
hence under every realised stratification and every admissible mixing
matrix $\Pi$: it is \emph{stratification-agnostic}, and the bias
mechanism of \cite{Labouriau2022} cannot touch it. (A dimension count
shows such instruments exist in abundance: within the polynomials of
degree at most $2K-1$, the simultaneous unbiasedness constraints
\eqref{eq:simultaneous-unbiased} cut down $2K$ coefficients by $K$
conditions, leaving a $K$-dimensional family---exactly enough to
estimate $K$ group parameters.)

\begin{proposition}[Godambe metric for the stratified Gaussian mixture]\label{prop:mixture}
Let $K=2$ with Gaussian components $\mathcal N(\mu_1,\sigma^2)$,
$\mathcal N(\mu_2,\sigma^2)$, $\sigma^2$ known, and parameter space
$\Theta = \{(\mu_1,\mu_2) : \mu_1 < \mu_2\}$. Define
\[
W(x;\theta) =
\begin{pmatrix}
\psi_a(x;\theta)\\[2pt]
\psi_b(x;\theta)
\end{pmatrix},
\]
where
\[
\begin{aligned}
\psi_a(x;\theta) &= (x-\mu_1)(x-\mu_2) - \sigma^2,\\
\psi_b(x;\theta) &= x^3 - (\mu_1^2+\mu_1\mu_2+\mu_2^2+3\sigma^2)\,x
                    + \mu_1\mu_2(\mu_1+\mu_2).
\end{aligned}
\]
Then:
\begin{enumerate}
\item $W$ is simultaneously weakly unbiased,
$\mathrm{E}_{\mathcal N(\mu_k,\sigma^2)}[W(X;\theta)] = 0$ for
$k=1,2$, hence unbiased under every mixing matrix $\Pi$; its
components lie in $\mathcal G_{\mathrm{mom}}$
(Lemma~\ref{lem:extension}).
\item Let $\bar w_k := I^{-1}\sum_{i=1}^I \pi_{ik}$ denote the average
design weight of group $k$, and let $S(\theta)$, $V(\theta)$ be the
averaged (per-observation) sensitivity and variability of $W$ under
the mixed model. Because expectations under the mixed law are linear
in the weights, $S$ and $V$ depend on $\Pi$ only through
$(\bar w_1, \bar w_2)$, and
\[
S(\theta) = -\,\delta
\begin{pmatrix}
\bar w_1 & -\bar w_2\\
\bar w_1(2\mu_1+\mu_2) & -\bar w_2(\mu_1+2\mu_2)
\end{pmatrix},
\qquad
\delta := \mu_2 - \mu_1 > 0,
\]
so that
$|\det S(\theta)| = \bar w_1 \bar w_2\, \delta^3$.
\item If both groups are effectively present, $\bar w_1 \bar w_2 > 0$,
then $S(\theta)$ is nonsingular, $V(\theta)$ is positive definite, and
the Godambe information
$G(\theta) = S(\theta)^\top V(\theta)^{-1} S(\theta)$
defines a smooth Riemannian metric on $\Theta$.
\end{enumerate}
\end{proposition}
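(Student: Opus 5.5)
The plan is to reduce everything to three Gaussian moment identities, $\mathrm{E}_{\mathcal N(m,\sigma^2)}[X]=m$, $\mathrm{E}[X^2]=m^2+\sigma^2$ and $\mathrm{E}[X^3]=m^3+3m\sigma^2$. The key device is to evaluate the instrument $W(\cdot;\theta)$ under a component law $\mathcal N(m,\sigma^2)$ whose mean $m$ is kept as a free variable, separate from the instrument's parameter $\theta=(\mu_1,\mu_2)$. Substituting the moments gives
\[
h_a(m;\theta):=\mathrm{E}_m[\psi_a]=(m-\mu_1)(m-\mu_2),
\qquad
h_b(m;\theta):=\mathrm{E}_m[\psi_b]=(m-\mu_1)(m-\mu_2)(m+\mu_1+\mu_2).
\]
For $h_b$, the $3m\sigma^2$ term cancels against $-3\sigma^2 m$. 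The factorisation is then checked by expanding the cubic: its $m^2$ coefficient is zero and its constant term is $\mu_1\mu_2(\mu_1+\mu_2)$. Both $h_a$ and $h_b$ vanish at $m=\mu_1$ and at $m=\mu_2$, which proves simultaneous unbiasedness, item~1. Every row $i$ of the mixed model is a convex combination $\sum_k\pi_{ik}\mathcal N(\mu_k,\sigma^2)$, so unbiasedness under every $\Pi$ follows by linearity. Admissibility in $\mathcal G_{\mathrm{mom}}$ holds because the components are polynomials and Gaussians have all moments.

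For item~2, I will take the per-observation sensitivity to be the average over $i$ of $-\mathrm{E}_i[\partial_\theta W]$. Each $\mathrm{E}_i$ is linear in the row $(\pi_{i1},\pi_{i2})$, so the average equals $-\sum_k \bar w_k\,\mathrm{E}_{\mu_k}[\partial_\theta W]$. The same argument applied to $WW^\top$ shows that $V=\sum_k\bar w_k\,\mathrm{E}_{\mu_k}[WW^\top]$. This gives the dependence on $\Pi$ only through $(\bar w_1,\bar w_2)$.

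Since the law is held fixed while $\theta$ varies in the instrument, I have $\mathrm{E}_{\mu_k}[\partial_{\mu_j}W]=\partial_{\mu_j}h(m;\theta)\big|_{m=\mu_k}$. Only the factor of $h_a$ or $h_b$ that vanishes at $m=\mu_k$ survives the differentiation. For example, $\partial_{\mu_1}h_a|_{m=\mu_1}=-(\mu_1-\mu_2)=\delta$, and $\partial_{\mu_1}h_b|_{m=\mu_1}=\delta(2\mu_1+\mu_2)$. The cross terms vanish: $\partial_{\mu_1}h|_{m=\mu_2}=0$, and symmetrically in the other index. This yields the stated matrix with its signs. The determinant is then
\[
\delta^2\bar w_1\bar w_2\bigl[(2\mu_1+\mu_2)-(\mu_1+2\mu_2)\bigr]=-\bar w_1\bar w_2\,\delta^3,
\]
whose absolute value is as claimed.

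For item~3, nonsingularity of $S$ is immediate from $\delta>0$ and $\bar w_1\bar w_2>0$. To see that $V$ is positive definite, note first that $a^\top Va=\sum_k\bar w_k\,\mathrm{E}_{\mu_k}[(a^\top W)^2]\ge0$. For $a\ne0$, the function $a^\top W(x;\theta)$ is a nonzero polynomial in $x$, because $\psi_a$ and $\psi_b$ have degrees $2$ and $3$ and are therefore linearly independent. A nonzero polynomial has finitely many zeros, so it is nonzero Lebesgue-a.e., and hence has positive second moment under either Gaussian component. Since some $\bar w_k>0$, this gives $a^\top Va>0$. Smoothness of $S$ and $V$ in $\theta$ holds because all their entries are polynomials in $(\mu_1,\mu_2)$. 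Local identification follows from the nonsingularity of $S$ via the implicit function theorem, as remarked after Definition~\ref{def:rif}. Proposition~\ref{prop:godambe-metric} with $q=p=2$ then gives the smooth Riemannian metric $G$.

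The main obstacle is conceptual rather than computational. One must make precise that the sensitivity differentiates the instrument with the generating (mixed) law fixed, and that averaging over heterogeneous rows commutes with this derivative. This bookkeeping is exactly what the evaluation device $h(m;\theta)$ is designed to make clean. The remaining steps are routine algebra.
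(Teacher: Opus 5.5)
Your proposal is correct and follows essentially the paper's proof. Gaussian moment identities give simultaneous unbiasedness, and linearity of the mixed law in the weights makes $S$ and $V$ depend only on $(\bar w_1,\bar w_2)$. The entries of $S$ come from component-wise expectations of $\partial_\theta W$. $V\succ 0$ holds because a nonzero polynomial cannot vanish almost surely under a law with positive density, after which Proposition~\ref{prop:godambe-metric} applies. Your evaluation device $h(m;\theta)$, with the factorisation $h_b(m;\theta)=(m-\mu_1)(m-\mu_2)(m+\mu_1+\mu_2)$, is a tidier packaging of the paper's entry-by-entry computation, and it makes the vanishing of the cross entries immediate.
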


\begin{proof}
(1) For $Y \sim \mathcal N(\mu_k, \sigma^2)$ write $X = Y$ and expand
about $\mu_k$. For $\psi_a$:
$\mathrm{E}[(X-\mu_1)(X-\mu_2)] = \sigma^2 +
(\mu_k-\mu_1)(\mu_k-\mu_2) = \sigma^2$ for $k \in \{1,2\}$, since one
factor vanishes. For $\psi_b$, using
$\mathrm{E}[X] = \mu_k$, $\mathrm{E}[X^3] = \mu_k^3 + 3\mu_k\sigma^2$:
\[
\mathrm{E}[\psi_b(X;\theta)]
= \mu_k^3 + 3\mu_k\sigma^2
- (\mu_1^2+\mu_1\mu_2+\mu_2^2+3\sigma^2)\mu_k
+ \mu_1\mu_2(\mu_1+\mu_2),
\]
which vanishes for $\mu_k \in \{\mu_1, \mu_2\}$ by direct expansion.
Unbiasedness under any law in
$\mathrm{co}\{\mathcal N(\mu_1,\sigma^2), \mathcal N(\mu_2,\sigma^2)\}$
follows by linearity of the pairing, and hence under any $\Pi$.

(2) The per-observation sensitivity under observation $i$ is linear in
the row $\pi_{i\cdot}$, so the average over $i$ equals the sensitivity
evaluated at the averaged weights $(\bar w_1, \bar w_2)$; the same
holds for the variability. The entries follow from
$\partial_{\mu_1}\psi_a = -(x - \mu_2)$,
$\partial_{\mu_2}\psi_a = -(x - \mu_1)$,
$\partial_{\mu_1}\psi_b = -(2\mu_1+\mu_2)x + \mu_2(2\mu_1+\mu_2)$,
$\partial_{\mu_2}\psi_b = -(\mu_1+2\mu_2)x + \mu_1(\mu_1+2\mu_2)$,
whose component-law expectations are
$\mathrm{E}_k[\partial_{\mu_1}\psi_a] = -(\mu_k-\mu_2)$,
$\mathrm{E}_k[\partial_{\mu_2}\psi_a] = -(\mu_k-\mu_1)$, and
$\mathrm{E}_1[\partial_{\mu_1}\psi_b] = \delta(2\mu_1+\mu_2)$,
$\mathrm{E}_2[\partial_{\mu_1}\psi_b] = 0$,
$\mathrm{E}_1[\partial_{\mu_2}\psi_b] = 0$,
$\mathrm{E}_2[\partial_{\mu_2}\psi_b] = -\delta(\mu_1+2\mu_2)$.
Averaging with weights $(\bar w_1,\bar w_2)$ and applying
$S = -\mathrm{E}[\partial_\theta W]$ gives the stated matrix, and
\[
\det S(\theta)
= \bar w_1 \bar w_2\,\delta^2\,
\bigl[(\mu_1+2\mu_2)-(2\mu_1+\mu_2)\bigr]\cdot(-1)
= -\,\bar w_1 \bar w_2\, \delta^3 .
\]

(3) Nonsingularity of $S$ is immediate from (2). For $V$: for any $a =
(a_1,a_2)^\top \neq 0$, $a^\top V(\theta) a$ is the variance of the
polynomial $a_1\psi_a + a_2\psi_b$ (of exact degree $2$ or $3$) under
the mixed law, whose density is strictly positive on $\mathbb R$; a
non-constant polynomial cannot be constant almost surely under such a
law, and $a_1\psi_a + a_2\psi_b$ is non-constant whenever $a \neq 0$.
Hence $V(\theta) \succ 0$, and
Proposition~\ref{prop:godambe-metric} applies.
\end{proof}

\begin{remark}\label{rem:mixture-complement}
This example is complementary to the support-dependent models
considered earlier. The mixed model is dominated and admits a smooth
positive likelihood, yet the score route fails in the precise sense of
\cite{Labouriau2022}: with a genuine mixture the score is biased and
the MLE inconsistent. The stratification-agnostic instrument $W$ is
untouched by this mechanism, and induces a smooth Godambe metric on
$\Theta$ for \emph{every} admissible mixing matrix. The geometry
degenerates exactly when a group leaves the effective sample
($\bar w_1 \bar w_2 \to 0$) or the components merge ($\delta \to 0$),
which is the correct boundary behaviour: in either limit the two-group
parametrisation genuinely loses identifiability.
\end{remark}

\begin{remark}[Mixtures of moment-free distributions]\label{rem:cauchy-mixture}
The stratified Gaussian example isolates the phenomenon of score bias by
working with component distributions that have moments of all orders.
The construction extends directly to components lacking ordinary
moments. For instance, consider the two-component Cauchy location
mixture with known mixing weight $w \in (0,1)$,
\[
f_\theta(x)
= w \cdot \frac{1}{\pi(1 + (x - \mu_1)^2)}
+ (1-w) \cdot \frac{1}{\pi(1 + (x - \mu_2)^2)},
\qquad \theta = (\mu_1, \mu_2).
\]
No ordinary moments exist, yet the \emph{distributional} moments are
well defined (see Section~\ref{ssec:distr-moments} and \cite{C}), and
one may construct a weak inference function from the characteristic
function. Since the characteristic function of the Cauchy location
model is $\phi_{\mu}(t) = e^{it\mu - |t|}$, the mixture characteristic
function is
$\phi_\theta(t) = e^{-|t|}(w\, e^{it\mu_1} + (1-w)\, e^{it\mu_2})$,
and transform-based weak inference functions of the form
$\psi_t(x;\theta) = e^{itx} - \phi_\theta(t)$, which lie in
$\mathcal G_{\mathrm b}$, may be used to construct a Godambe metric.
This illustrates that the Godambe--Riemannian framework simultaneously
handles two distinct obstructions: score bias (as in the stratified
Gaussian mixture) and the absence of ordinary moments.
\end{remark}

\begin{remark}[Geodesic distances: the geometry is usable]\label{rem:geodesics}
In the one-parameter examples the metrics integrate to explicit
Riemannian distances: $d(\theta,\theta') =
\sqrt3\,\bigl|\log(\theta'/\theta)\bigr|$ for the uniform scale model
(Remark~\ref{rem:uniform-log}), $d(\theta,\theta') =
|\theta-\theta'|$ for the shifted exponential, and $d(\theta,\theta')
= 2\sqrt2\,|\theta-\theta'|$ for the Cantor location model (moment
instruments). For the stratified mixture the distance is the
Riemannian distance of the two-parameter metric $G(\theta)$ on
$\{\mu_1 < \mu_2\}$, available numerically. The Godambe geometries
constructed here are therefore immediately usable---for
reparametrisation-invariant step sizes, preconditioning, and
model-distance diagnostics---while curvature questions, which require
multiparameter families, are deferred to
Section~\ref{sec:discussion}.
\end{remark}

\section{A lattice SPDE example: stability of the statistical model}\label{sec:spde}

The three examples of Section~\ref{sec:examples} were chosen to
isolate three qualitatively distinct obstructions to the
Fisher--Rao construction. In this section we turn to a structurally
richer example, coming from the theory of stochastic partial
differential equations, in which the Godambe--Riemannian framework
not only provides a Riemannian metric where none was available, but
also admits a stability statement that ties the geometry of the
statistical model to the spectral properties of the underlying
dynamical system. The example is finite-dimensional throughout---it
is posed on a finite lattice and all the computations reduce to
one-dimensional $\alpha$-stable arithmetic in the eigenbasis of the
discrete Laplacian---so no infinite-dimensional analysis is required.

\subsection{The lattice stochastic heat equation}\label{ssec:spde-setup}

Fix $N \ge 3$. Consider the discrete stochastic heat equation on the
lattice $\{1,\dots,N-1\}$ with Dirichlet boundary conditions
$u_0(t) = u_N(t) = 0$:
\begin{equation}\label{eq:spde}
du_i(t) \;=\; -\theta \,(A u(t))_i \, dt \;+\; \sigma \, dL_i^{(\alpha)}(t),
\qquad i = 1, \dots, N-1,
\end{equation}
where $A = -\Delta_N$ is the $(N-1)\times (N-1)$ discrete negative
Laplacian,
\[
A \;=\; \mathrm{tridiag}(-1,2,-1),
\]
$\theta > 0$ is the diffusion parameter (the single parameter to be
estimated), $\sigma > 0$ is a known scale, and
$L^{(\alpha)}_1,\dots,L^{(\alpha)}_{N-1}$ are independent symmetric
$\alpha$-stable L\'evy processes with stability index
$\alpha \in (0,2)$ and unit scale. The initial condition is $u(0)=0$.
No moment assumptions enter anywhere below: for $\alpha < 2$ the noise
has no variance, and for $\alpha \le 1$ not even a mean.

The operator $A$ is symmetric positive definite, with eigenvalues and
normalised eigenvectors
\begin{equation}\label{eq:laplacian-spectrum}
\lambda_k \;=\; 4\sin^2\!\left(\frac{k\pi}{2N}\right),
\qquad
v_k(i) \;=\; \sqrt{\tfrac{2}{N}}\,\sin\!\left(\frac{k\pi i}{N}\right),
\qquad k,i = 1,\dots,N-1.
\end{equation}
In particular $\lambda_1(N) = 4\sin^2(\pi/(2N))$ is the spectral gap
of $A$, which controls the rate of decay of the slowest eigenmode of
the deterministic heat flow $\dot u = -\theta A u$.

Because $L^{(\alpha)}$ has no variance for $\alpha < 2$, the state
$u(t)$ has no covariance matrix and the classical Fisher--Rao
machinery is unavailable: the model has no density in closed form,
and the usual second-moment Fisher information is simply undefined.
The distributional framework of Section~\ref{sec:preliminaries}
applies directly, however, because the characteristic function of
$u(t)$ is available in closed form.

\subsection{Closed-form characteristic function}\label{ssec:spde-charfun}

Since the drift in \eqref{eq:spde} is linear and $A$ is symmetric,
Duhamel's formula yields
\[
u(t) \;=\; \sigma \int_0^t e^{-\theta (t-s) A} \, dL^{(\alpha)}(s).
\]
Using the independence and scaling properties of symmetric
$\alpha$-stable L\'evy noise, the characteristic function of $u(t)$
at a test vector $\xi \in \mathbb R^{N-1}$ is
\[
\phi_\theta(\xi; t)
\;=\; \mathbb E_\theta\!\left[e^{i \langle \xi, u(t)\rangle}\right]
\;=\; \exp\!\left( -\sigma^\alpha \int_0^t
\big\| e^{-\theta(t-s) A} \xi \big\|_\alpha^\alpha \, ds \right),
\]
where $\|x\|_\alpha^\alpha = \sum_{i=1}^{N-1} |x_i|^\alpha$. Choosing
$\xi = c v_k$ along a single eigenvector of $A$, one obtains the
clean formula
\begin{equation}\label{eq:phi-mode}
\phi_\theta(c v_k; t)
\;=\; \exp\!\left( - |c|^\alpha \beta_k(t,\theta)\right),
\end{equation}
where
\begin{equation}\label{eq:beta}
\beta_k(t,\theta)
\;=\;
\sigma^\alpha m_k \,
\frac{1 - e^{-\alpha\theta\lambda_k t}}{\alpha\theta\lambda_k},
\qquad
m_k \;:=\; \|v_k\|_\alpha^\alpha
      \;=\; \left(\tfrac{2}{N}\right)^{\alpha/2}
            \sum_{i=1}^{N-1}
            \left|\sin\!\left(\tfrac{k\pi i}{N}\right)\right|^\alpha.
\end{equation}
In particular, the projection $U_k(t) := \langle v_k, u(t)\rangle$ is
univariate symmetric $\alpha$-stable with scale
$s_k(t,\theta) = \beta_k(t,\theta)^{1/\alpha}$, and its stationary
scale is
\[
\beta_k(\infty,\theta) \;=\;
\frac{\sigma^\alpha m_k}{\alpha\theta\lambda_k},
\]
from which the exponential approach to stationarity
\begin{equation}\label{eq:beta-rate}
\beta_k(t,\theta) - \beta_k(\infty,\theta)
\;=\; -\,\beta_k(\infty,\theta)\, e^{-\alpha\theta\lambda_k t}
\end{equation}
is immediate.

\subsection{Weak Godambe information from eigenmode probes}\label{ssec:spde-godambe}

Since $U_k(t)$ is symmetric and has no finite variance, the natural
probe is a bounded cosine inference function. Fix $k$ and $c \neq 0$
and define
\begin{equation}\label{eq:psi-spde}
\psi_{k,c}(u;\theta)
\;=\; \cos\!\big( c\, U_k(t) \big)
      \;-\; f_k(c, t, \theta),
\qquad
f_k(c, t, \theta) \;:=\; e^{-|c|^\alpha \beta_k(t,\theta)}.
\end{equation}
By construction $\psi_{k,c}$ is bounded by $2$, lies in the admissible
class $\mathcal G_{\mathrm b}$ of Lemma~\ref{lem:extension} (it is a
bounded continuous function of the field $u$), and is weakly unbiased:
it is a weak regular inference function in the sense of
Definition~\ref{def:rif}. Its
weak sensitivity and weak variability are closed-form:
\begin{align}
S_{k,c}(t,\theta)
  &= -\,\mathrm{E}_\theta[\partial_\theta \psi_{k,c}]
   = -\,|c|^\alpha \, \partial_\theta \beta_k(t,\theta)\,
       f_k(c,t,\theta),
     \label{eq:spde-sensitivity}\\[2pt]
V_{k,c}(t,\theta)
  &= \mathrm{E}_\theta[\psi_{k,c}^2]
   = \tfrac{1}{2}\big(1 + f_k(2c,t,\theta)\big)
     \;-\; f_k(c,t,\theta)^2,
     \label{eq:spde-variability}
\end{align}
with
\begin{equation}\label{eq:dbeta-dtheta}
\partial_\theta \beta_k(t,\theta)
\;=\; \frac{\sigma^\alpha m_k \, t\, e^{-\alpha\theta\lambda_k t}
      \;-\;\beta_k(t,\theta)}
      {\theta}.
\end{equation}
Positivity of the variability holds for every $t > 0$ and all
$\alpha \in (0,2)$: writing $x := |c|^\alpha \beta_k(t,\theta) > 0$,
\[
V_{k,c} \;=\; \tfrac12\bigl(1 + e^{-2^\alpha x}\bigr) - e^{-2x}
\;>\; \tfrac12\bigl(1 + e^{-4 x}\bigr) - e^{-2x}
\;=\; \tfrac12\bigl(1 - e^{-2x}\bigr)^2 \;\ge\; 0,
\]
since $2^\alpha < 4$. The associated weak Godambe information is
\begin{equation}\label{eq:Gspde}
G_k(t,\theta,c) \;=\; \frac{S_{k,c}(t,\theta)^2}{V_{k,c}(t,\theta)},
\end{equation}
and the stationary limit $G_k(\infty,\theta,c)$ is obtained by
substituting $\beta_k(\infty,\theta) = \sigma^\alpha m_k /
(\alpha\theta\lambda_k)$ and
$\partial_\theta\beta_k(\infty,\theta) = -\beta_k(\infty,\theta)/\theta$
into \eqref{eq:spde-sensitivity}--\eqref{eq:Gspde}. In particular,
$G_k(\infty,\theta,c)$ is nontrivial for any $c\neq 0$, and the choice
$c_\star := \beta_k(\infty,\theta)^{-1/\alpha}$ gives the universal
normalisation $|c_\star|^\alpha\beta_k(\infty,\theta) = 1$.

Since the parameter is scalar, a single eigenmode probe already
realises the Godambe--Riemannian structure guaranteed by
Theorem~\ref{thm:main}, in closed form, for a model with no variance
and no density. Combining several probes raises a point that deserves
emphasis. For $\alpha = 2$ the projections $U_k(t) = \langle v_k,
u(t)\rangle$ onto distinct eigenmodes are independent (orthogonal
rotations preserve the independence of Gaussian coordinates), but for
$\alpha < 2$ they are \emph{dependent}: joint $\alpha$-stable vectors
do not decouple under orthogonal transformations, and the
cross-moments of eigenmode probes do not vanish in general. A sum
$\sum_{j} G_{k_j}(t,\theta,c_j)$ is therefore \emph{not} the Godambe
information of the stacked inference function; it is the metric of
the diagonally weighted quadratic discrepancy with $W =
\mathrm{diag}(1/V_{k_j,c_j})$, legitimate by
Proposition~\ref{prop:stein-godambe} but suboptimal. The exact
multi-probe Godambe information is nevertheless available in closed
form, because the joint characteristic function of any finite family
of eigenmode projections is explicit; we carry this out for two
probes.

\subsection{The exact two-probe Godambe information}\label{ssec:spde-two-probe}

Fix two distinct modes $k \neq l$, frequencies $c, d \neq 0$, and
stack $\psi = (\psi_{k,c}, \psi_{l,d})^\top$. The sensitivity vector
$S = (S_{k,c}, S_{l,d})^\top$ has the marginal entries
\eqref{eq:spde-sensitivity}, and the diagonal entries of $V$ are
\eqref{eq:spde-variability}. For the off-diagonal entry, the
product-to-sum identity $\cos A \cos B = \tfrac12[\cos(A+B) +
\cos(A-B)]$ and the closed-form joint characteristic function give
\begin{equation}\label{eq:spde-crosscov}
V_{12}(t,\theta)
= \mathrm{E}_\theta[\psi_{k,c}\,\psi_{l,d}]
= \tfrac12\Bigl( e^{-\beta^{+}_{k,l}(t,\theta)}
               + e^{-\beta^{-}_{k,l}(t,\theta)} \Bigr)
- e^{-|c|^\alpha \beta_k(t,\theta)}\, e^{-|d|^\alpha \beta_l(t,\theta)},
\end{equation}
where, substituting $u = t-s$,
\begin{equation}\label{eq:beta-pm}
\beta^{\pm}_{k,l}(t,\theta)
\;:=\; \sigma^\alpha \int_0^t \sum_{i=1}^{N-1}
\bigl| c\, e^{-\theta\lambda_k u} v_k(i)
  \,\pm\, d\, e^{-\theta\lambda_l u} v_l(i) \bigr|^\alpha \, du ,
\end{equation}
so that $\mathrm{E}_\theta[\cos(cU_k \pm dU_l)] =
\phi_\theta(cv_k \pm dv_l;t) = e^{-\beta^{\pm}_{k,l}(t,\theta)}$.
For $\alpha = 2$,
orthonormality of $(v_k)$ collapses \eqref{eq:beta-pm} to
$\beta^{\pm}_{k,l} = c^2\beta_k + d^2\beta_l$ and
\eqref{eq:spde-crosscov} vanishes identically---the Gaussian
decoupling. For $\alpha < 2$ the $\alpha$-norm is not quadratic, the
cross terms in \eqref{eq:beta-pm} do not cancel, and $V_{12} \neq 0$
in general.

The exact two-probe weak Godambe information (with $p = 1$, $q = 2$)
is then the scalar
\begin{equation}\label{eq:G-two-probe}
G_{k,l}(t,\theta;c,d)
= S^\top V^{-1} S
= \frac{S_{k,c}^2 V_{22} - 2 S_{k,c} S_{l,d} V_{12}
        + S_{l,d}^2 V_{11}}
       {V_{11} V_{22} - V_{12}^2} ,
\end{equation}
with all ingredients given by
\eqref{eq:spde-sensitivity}--\eqref{eq:dbeta-dtheta} and
\eqref{eq:spde-crosscov}--\eqref{eq:beta-pm}: everything reduces to
finite sums and one-dimensional integrals of elementary functions.
The matrix $V$ is nonsingular for $t>0$: strict Cauchy--Schwarz holds
because the joint law of $(U_k, U_l)$ has full support, so the two
probe residuals are not almost-surely proportional. By
Proposition~\ref{prop:godambe-metric} (with full column rank $q=2 >
p=1$), \eqref{eq:G-two-probe} defines a Riemannian metric on
$\Theta=(0,\infty)$, and $G_{k,l} \ge \max\{G_k, G_l\}$, with strict
improvement whenever $V_{12} \neq 0$.

The stabilisation analysis extends verbatim: the tail bound
\[
\bigl|\beta^{\pm}_{k,l}(t,\theta) - \beta^{\pm}_{k,l}(\infty,\theta)\bigr|
\;\le\; \sigma^\alpha (N-1)
\bigl( |c|\,\|v_k\|_\infty + |d|\,\|v_l\|_\infty \bigr)^{\alpha}
\frac{e^{-\alpha\theta\lambda_{k\wedge l} t}}{\alpha\theta\lambda_{k\wedge l}},
\]
where $\lambda_{k \wedge l} := \min\{\lambda_k, \lambda_l\}$, shows
that all entries of $S$ and $V$ converge to their stationary values at
rate at least $\alpha\theta\lambda_{k\wedge l}$ (with the polynomial
prefactor of \eqref{eq:dbeta-dtheta} for the sensitivities), whence,
by the same Taylor argument as in
Proposition~\ref{prop:spde-stability},
\[
\bigl| G_{k,l}(t,\theta;c,d) - G_{k,l}(\infty,\theta;c,d) \bigr|
\;\le\; C\,(1+t)\, e^{-\alpha\theta\lambda_{k\wedge l}\, t}:
\]
the two-probe geometry stabilises at the rate of the \emph{slower} of
the two modes, exactly as the dynamical picture dictates.

\subsection{The spectral gap drives geometric stability}\label{ssec:spde-stability}

We can now state the central observation of this section: the rate at
which the Godambe geometry of the statistical model stabilises as the
observation time $t$ grows is controlled by the spectrum of $-\Delta_N$
itself, i.e., by the same quantity that controls the dynamical
stability of the deterministic heat flow.

\begin{proposition}[Spectral-gap stability of the Godambe metric]
\label{prop:spde-stability}
Fix $N\ge 3$, $\theta > 0$, $\sigma > 0$, $\alpha \in (1,2)$,
$k \in \{1,\dots,N-1\}$ and $c \in \mathbb R \setminus \{0\}$. The
weak Godambe information $G_k(t,\theta,c)$ defined by
\eqref{eq:Gspde} converges to its stationary value
$G_k(\infty,\theta,c)$ as $t \to \infty$, and there exists a
constant $C = C(\theta,\sigma,\alpha,k,c,N) > 0$ such that
\begin{equation}\label{eq:spde-stability}
\big| G_k(t,\theta,c) - G_k(\infty,\theta,c)\big|
\;\le\; C\,(1+t)\,e^{-\alpha\theta\lambda_k t},
\end{equation}
where $\lambda_k = 4\sin^2(k\pi/(2N))$. In particular,
\[
\limsup_{t\to\infty}\;\frac{1}{t}\,
\log\!\big|G_k(t,\theta,c) - G_k(\infty,\theta,c)\big|
\;\le\; -\,\alpha\theta\lambda_k,
\]
and the slowest stabilisation rate, obtained for $k=1$, equals
$\alpha\theta\lambda_1(N) = 4\alpha\theta\sin^2(\pi/(2N))$.
\end{proposition}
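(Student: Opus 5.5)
The plan is to write $G_k$ as a smooth function of two scalar inputs and transfer their exponential convergence through a local Lipschitz estimate. Set
\[
x(t) := |c|^\alpha\beta_k(t,\theta), \qquad y(t) := |c|^\alpha\partial_\theta\beta_k(t,\theta).
\]
By \eqref{eq:spde-sensitivity}--\eqref{eq:Gspde},
\[
G_k = \Phi(x,y) := \frac{y^2 e^{-2x}}{\tfrac12\bigl(1+e^{-2^\alpha x}\bigr)-e^{-2x}} ,
\]
and $G_k(\infty,\theta,c)=\Phi(x_\infty,y_\infty)$ with $x_\infty=|c|^\alpha\beta_k(\infty,\theta)$ and $y_\infty=-x_\infty/\theta$.

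\emph{Step 1: convergence of the inputs.} By \eqref{eq:beta-rate}, $|x(t)-x_\infty| = x_\infty e^{-\alpha\theta\lambda_k t}$. Using \eqref{eq:dbeta-dtheta} and $\partial_\theta\beta_k(\infty,\theta)=-\beta_k(\infty,\theta)/\theta$,
\[
y(t)-y_\infty = \frac{|c|^\alpha}{\theta}\Bigl(\sigma^\alpha m_k\,t\,e^{-\alpha\theta\lambda_k t} - \bigl(\beta_k(t,\theta)-\beta_k(\infty,\theta)\bigr)\Bigr).
\]
Hence $|y(t)-y_\infty|\le C_1(1+t)e^{-\alpha\theta\lambda_k t}$. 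This is the source of the polynomial prefactor $(1+t)$.

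\emph{Step 2: a compact region where $\Phi$ is smooth.} We need a compact set $K$ containing $(x(t),y(t))$ for all $t\ge t_0$, together with $(x_\infty,y_\infty)$, on which the denominator of $\Phi$ is bounded below. The function $x(t)$ increases to $x_\infty>0$, so for $t\ge t_0$ it lies in $[x_\infty/2,x_\infty]$. On that interval the positivity bound of \S\ref{ssec:spde-godambe} gives
\[
V\ge \tfrac12\bigl(1-e^{-2x}\bigr)^2\ge \tfrac12\bigl(1-e^{-x_\infty}\bigr)^2>0.
\]
The inputs $y(t)$ are bounded by Step 1. Since $\Phi$ is $C^1$ on a neighbourhood of $K$, it is Lipschitz there with some constant $L$. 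Therefore
\[
|G_k(t)-G_k(\infty)|\le L\bigl(|x-x_\infty|+|y-y_\infty|\bigr)\le C(1+t)e^{-\alpha\theta\lambda_k t}\quad\text{for } t\ge t_0.
\]

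\emph{Step 3: the initial segment and conclusion.} On $(0,t_0]$ one enlarges $C$. Near $t=0$ the denominator tends to zero, so one checks that $G_k$ stays bounded there. As $t\to0$ both $S$ and $V$ vanish: $S^2\sim y^2$ with $y=O(t)$, while $V\sim x(2-2^{\alpha-1})\cdot$const with $x\sim \sigma^\alpha m_k|c|^\alpha t$. Hence $G_k=O(t)\to0$ and is bounded on $(0,t_0]$. The factor $e^{-\alpha\theta\lambda_k t}$ is bounded below on $[0,t_0]$, so the estimate holds there after enlarging $C$. Convergence follows immediately from \eqref{eq:spde-stability}. The limsup statement follows by taking logarithms, since $\log(1+t)/t\to0$. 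The rate identification for $k=1$ comes from the monotonicity of $\lambda_k$ in $k$ via \eqref{eq:laplacian-spectrum}.

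The main obstacle is Step 2, specifically the uniform lower bound on $V$ along the trajectory. The argument handles it by using the monotonicity of $\beta_k$ in $t$ together with the explicit bound $V\ge\tfrac12(1-e^{-2x})^2$. The restriction $\alpha\in(1,2)$ does not seem to be needed by this argument; any $\alpha\in(0,2)$ appears to work.
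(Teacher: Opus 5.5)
Your proof is correct and follows the same route as the paper: $\beta_k$ and $\partial_\theta\beta_k$ converge exponentially, with the latter carrying the $(1+t)$ prefactor from \eqref{eq:dbeta-dtheta}, and this is transferred to $G_k$ through a first-order Taylor (Lipschitz) estimate of the smooth map $(x,y)\mapsto\Phi(x,y)$. You also make explicit two points the paper's sketch leaves implicit, namely the uniform lower bound on $V$ for $t\ge t_0$ and the boundedness of $G_k$ on $(0,t_0]$, and you are right that nothing in the argument uses $\alpha>1$.
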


\begin{proof}
From \eqref{eq:beta-rate}, $\beta_k(t,\theta) - \beta_k(\infty,\theta)
= -\beta_k(\infty,\theta)e^{-\alpha\theta\lambda_k t}$, which is
exponentially small with rate $\alpha\theta\lambda_k$. Substituting
this into \eqref{eq:dbeta-dtheta} yields
\[
\partial_\theta \beta_k(t,\theta)
\;-\; \big(-\beta_k(\infty,\theta)/\theta\big)
\;=\; \frac{\sigma^\alpha m_k\, t + \beta_k(\infty,\theta)}{\theta}\,
      e^{-\alpha\theta\lambda_k t},
\]
which has a polynomial prefactor $(1+t)$ but decays with the same
exponential rate. The functions $f_k(c,t,\theta)$ and
$f_k(2c,t,\theta)$, and hence the sensitivity
\eqref{eq:spde-sensitivity} and variability
\eqref{eq:spde-variability}, are smooth functions of
$\beta_k(t,\theta)$ and $\partial_\theta \beta_k(t,\theta)$; a
first-order Taylor expansion around the stationary values gives the
bound \eqref{eq:spde-stability} with the polynomial prefactor
inherited from \eqref{eq:dbeta-dtheta}. Taking logarithms and
dividing by $t$ yields the $\limsup$ statement.
\end{proof}

\begin{remark}[Collapse under unstable dynamics]\label{rem:spde-collapse}
The stability statement has a two-sided reading. For $\theta > 0$ the
geometry stabilises at rate $\alpha\theta\lambda_k$. If instead
$\theta < 0$ (unstable heat flow), then $\beta_k(t,\theta) =
\sigma^\alpha m_k\,(e^{\alpha|\theta|\lambda_k t} -
1)/(\alpha|\theta|\lambda_k) \to \infty$, so $f_k(c,t,\theta) =
e^{-|c|^\alpha\beta_k} \to 0$ doubly exponentially. The decay of
$f_k$ dominates the polynomial and exponential growth of
$\partial_\theta\beta_k$ in \eqref{eq:spde-sensitivity}, whence
$S_{k,c} \to 0$, while $V_{k,c} \to \tfrac12$; therefore
$G_k(t,\theta,c) \to 0$. At long horizons a fixed bounded probe
extracts no information about an unstable parameter: the state
diverges and the cosine probe saturates. Dynamical instability thus
appears geometrically as the \emph{collapse}---not the blow-up---of
the Godambe metric.
\end{remark}

\begin{remark}[The stationary geometry is scale geometry]\label{rem:spde-adaptive}
At stationarity, $\theta$ enters each probe only through the scale
$\beta_k(\infty,\theta) = \sigma^\alpha m_k/(\alpha\theta\lambda_k)
\propto 1/\theta$. Writing $x := |c|^\alpha\beta_k(\infty,\theta)$,
the stationary metric takes the separated form
\[
G_k(\infty,\theta,c) \;=\; \frac{h_\alpha(x)}{\theta^2},
\qquad
h_\alpha(x)
:= \frac{x^2 e^{-2x}}
        {\tfrac12\bigl(1+e^{-2^\alpha x}\bigr) - e^{-2x}} ,
\]
since $S_{k,c}(\infty,\theta) = (x/\theta)e^{-x}$ by
$\partial_\theta\beta_k(\infty,\theta) =
-\beta_k(\infty,\theta)/\theta$. The profile $h_\alpha$ vanishes at
$0$ and at $\infty$ and attains an interior maximum $h_\alpha^* =
h_\alpha(x^*_\alpha)$; consequently
\[
\sup_{c \neq 0}\, G_k(\infty,\theta,c)
\;=\; \frac{h_\alpha^*}{\theta^2}
\qquad\text{for every mode } k .
\]
The frequency-optimised stationary Godambe metric is thus exactly the
scale-invariant metric $\mathrm{const}\cdot d\theta^2/\theta^2$---flat
in $\eta = \log\theta$, with geodesic distance proportional to
$|\log(\theta'/\theta)|$---the same geometry carried by the uniform
scale model (Remark~\ref{rem:uniform-log}). This is as it must be:
at stationarity $\theta$ is a pure scale parameter of each eigenmode
law, and the optimised weak geometry detects precisely that. (The
optimisation is over the fixed-frequency family; a probe with
$\theta$-dependent frequency is a different instrument, whose
sensitivity acquires an extra term.)
\end{remark}

\subsection{Reading and numerical verification}\label{ssec:spde-reading}

Proposition~\ref{prop:spde-stability} is best read as a quantitative
bridge between the \emph{dynamical} and the \emph{geometric}
stability of the model. On the dynamical side, the deterministic
flow $\dot u = -\theta A u$ decays in each eigenmode at rate
$\theta\lambda_k$; the slowest mode decays at rate
$\theta\lambda_1(N)$, which is the classical spectral gap of $-\Delta_N$.
On the geometric side, the weak Godambe information $G_k(t,\theta,c)$
stabilises at rate $\alpha\theta\lambda_k$. The two rates differ by
the factor $\alpha$, reflecting the fact that the characteristic
function of $\alpha$-stable noise carries an $\alpha$-th power of the
scale; as $\alpha \to 2$ (the Gaussian limit) the geometric rate
becomes $2\theta\lambda_k$, matching the rate at which the
second-moment covariance of a Gaussian OU process converges to its
stationary value. In either case, the Godambe geometry of the
statistical model stabilises at precisely the rate dictated by the
dynamical stability of the underlying lattice heat equation, the two
rates agreeing up to the explicit noise-index factor $\alpha$.

Seen through the lens of the roles of the metric discussed in
Section~\ref{sec:canonicity}, the example simultaneously exercises
the inferential role (the Godambe information quantifies how fast one
can learn $\theta$ from data at time $t$) and the dynamical/geometric
role (the same quantity encodes the stability of the underlying
system). That these two roles are linked by an explicit rate is not
an accident of the Gaussian theory---it holds in the $\alpha$-stable
case without any variance in sight---and it points to a general link between the dynamical (Lyapunov)
stability of the underlying system and the geometric stability of the
associated statistical model.

We have verified the main formulas
\eqref{eq:phi-mode}--\eqref{eq:beta-rate} numerically against a direct
Monte Carlo simulation of \eqref{eq:spde} (Euler--Maruyama with
Chambers--Mallows--Stuck $\alpha$-stable increments): the empirical
characteristic function of $u(t)$ along $v_k$ agrees with the
closed form to within the Monte Carlo error. Similarly,
Proposition~\ref{prop:spde-stability} has been verified numerically
on the closed-form expression for $G_k(t,\theta,c)$, with the
empirical decay rate of $|G_k(t) - G_k(\infty)|$ converging to
$\alpha\theta\lambda_k$ as $t$ is pushed into the asymptotic
regime (the finite-$t$ deficit is entirely accounted for by the
$(1+t)$ prefactor in the bound).

\section{Stein discrepancies and geometry}\label{sec:stein}

An alternative route to the construction of a Riemannian structure on a
statistical model is provided by Stein discrepancies \cite{D}. This approach is
conceptually distinct from both likelihood-based and
inference-function-based constructions. While the Fisher information arises
from the local behaviour of the likelihood, and the Godambe information
arises from inference functions, Stein discrepancies are based on operator
identities that characterise the model.

We use the notion of weak Stein operator introduced in
Section~\ref{ssec:weak-stein}. Given a parametric family
$\{P_\theta:\theta\in\Theta\subseteq\mathbb R^p\}$ and a weak Stein
operator $\mathcal A_\theta$, a Stein discrepancy between $Q$ and
$P_\theta$ is defined as
\[
D_{\mathrm S}(Q,P_\theta)
=
\sup_{g\in\mathcal G}
\left|
\mathbb E_Q[\mathcal A_\theta g(X)]
\right|
\]
or through a quadratic form involving a finite or infinite collection of test
functions. When $\mathcal G$ is a class of Schwartz functions, this
construction does not require the existence of a density; it depends only on
the distributional representation of the model.

\subsection{The Stein--Godambe equivalence}\label{ssec:stein-godambe}

To investigate whether Stein discrepancies induce a Riemannian structure, consider
their local behaviour along the model. For $h\in\mathbb R^p$ small, if the
discrepancy is sufficiently smooth, one expects an expansion
\[
D_{\mathrm S}(P_{\theta+h},P_\theta)^2
=
h^\top G_{\mathrm S}(\theta) h
+
o(\|h\|^2),
\qquad h\to 0,
\]
where $G_{\mathrm S}(\theta)$ is a symmetric matrix. If $G_{\mathrm S}(\theta)$
is positive definite and smooth, it defines a Riemannian metric, which we call
a Stein metric.

\begin{proposition}[Stein--Godambe equivalence for quadratic discrepancies]\label{prop:stein-godambe}
Let $\{P_\theta:\theta\in\Theta\subseteq\mathbb R^p\}$ be a parametric model,
and let $\psi(x;\theta)\in\mathbb R^m$ be a vector of functions such that
$\mathbb E_\theta[\psi(X;\theta)]=0$ for all $\theta\in\Theta$.
Assume that $\psi$ is a regular inference function, with sensitivity and
variability matrices
\[
S(\theta)
=
-\mathbb E_\theta[\partial_\theta \psi(X;\theta)],
\qquad
V(\theta)
=
\mathbb E_\theta[\psi(X;\theta)\psi(X;\theta)^\top],
\]
where $V(\theta)$ is positive definite and $S(\theta)$ is non-singular.

Define a Stein-type quadratic discrepancy by
\[
D_{\mathrm S}(Q,P_\theta)^2
=
\Psi(Q,\theta)^\top W(\theta)\Psi(Q,\theta),
\qquad
\Psi(Q,\theta)=\mathbb E_Q[\psi(X;\theta)],
\]
where $W(\theta)$ is a symmetric positive-definite matrix.

Then, for $h\to 0$,
\[
D_{\mathrm S}(P_{\theta+h},P_\theta)^2
=
h^\top S(\theta)^\top W(\theta) S(\theta)\, h
+o(\|h\|^2).
\]
In particular, if $W(\theta)=V(\theta)^{-1}$, then the induced Riemannian metric
coincides with the Godambe information,
$G_{\mathrm S}(\theta)=G(\theta)=S(\theta)^\top V(\theta)^{-1}S(\theta)$.
\end{proposition}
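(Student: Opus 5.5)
The plan is a first-order Taylor expansion of the map $h\mapsto\Psi(P_{\theta+h},\theta)$ around $h=0$, followed by substitution into the quadratic form. The key observation is that $\Psi(P_\theta,\theta)=\mathbb E_\theta[\psi(X;\theta)]=0$ by weak unbiasedness (Definition~\ref{def:rif}). The quadratic discrepancy therefore vanishes to second order, and its leading term is governed entirely by the first derivative of $\Psi$ in its first (law) argument, with the instrument's parameter held fixed at $\theta$.

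First, I would compute that derivative. Under Assumption~\ref{ass:weakC1}, applied componentwise to the admissible probes $\psi_j(\cdot;\theta)$ with $\theta$ frozen, the map $\eta\mapsto\langle T_\eta,\psi(\cdot;\theta)\rangle$ is $C^1$ near $\eta=\theta$. Its Jacobian at $\eta=\theta$ is $\langle\partial_\theta T_\theta,\psi(\cdot;\theta)\rangle$. By the weak Bartlett identity \eqref{eq:weak-bartlett}, obtained by differentiating $\langle T_\eta,\psi(\cdot;\eta)\rangle\equiv0$ and splitting the total derivative into a law part and an instrument part, this Jacobian equals $S(\theta)$. Hence
\[
\Psi(P_{\theta+h},\theta)=S(\theta)h+r(h),\qquad r(h)=o(\|h\|),
\]
where the remainder is controlled by the mean value theorem together with continuity of the derivative (the $C^1$ part of Assumption~\ref{ass:weakC1}).

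Second, I would substitute this expansion into $D_{\mathrm S}^2=\Psi^\top W(\theta)\Psi$. Note that $W$ is evaluated at the fixed $\theta$, so it contributes no derivative terms. Expanding gives
\[
h^\top S^\top W S h+2h^\top S^\top W r(h)+r(h)^\top W r(h).
\]
The cross term is bounded by $2\|W\|\,\|S\|\,\|h\|\,\|r(h)\|=o(\|h\|^2)$, and the last term is $O(\|r(h)\|^2)=o(\|h\|^2)$. This yields the claimed expansion with $G_{\mathrm S}=S^\top WS$.

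Third, I would specialise to $W=V(\theta)^{-1}$, which gives $G_{\mathrm S}=S^\top V^{-1}S=G(\theta)$. That this is a genuine Riemannian metric follows from Proposition~\ref{prop:godambe-metric}, since $V\succ0$ and $S$ is nonsingular. For general $W\succ0$, the same argument shows that $S^\top WS$ is positive definite, and it is smooth whenever $W$ is.

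The main obstacle I expect is the first step: justifying the identification of the law-derivative of $\Psi$ with $S(\theta)$. This requires the interchange of differentiation and pairing in the law argument alone, not just in the total derivative along the diagonal. I would handle it by invoking Assumption~\ref{ass:weakC1} for the fixed probe $\psi(\cdot;\theta)$, which is admissible by hypothesis. I would then derive \eqref{eq:weak-bartlett} via the chain rule applied to $F(\eta,\theta')=\langle T_\eta,\psi(\cdot;\theta')\rangle$, which additionally needs joint $C^1$ regularity of $F$ (implicit in $\psi$ being a regular inference function with well-defined sensitivity). I would state that requirement explicitly as part of the regularity being assumed.
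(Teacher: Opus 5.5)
Your proposal is correct and follows essentially the same route as the paper. Both arguments expand $\eta\mapsto\langle T_\eta,\psi(\cdot;\theta)\rangle$ to first order under Assumption~\ref{ass:weakC1}, identify the derivative with $S(\theta)$ via the weak Bartlett identity \eqref{eq:weak-bartlett}, and substitute into the quadratic form. Your explicit bounds on the cross and remainder terms, and your remark that deriving \eqref{eq:weak-bartlett} by the chain rule needs joint regularity in the law and instrument arguments, only make explicit what the paper leaves implicit.
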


\begin{proof}
Since $\mathbb E_\theta[\psi(X;\theta)]=0$, a first-order expansion yields
\[
\Psi(P_{\theta+h},\theta)
=
\mathbb E_{\theta+h}[\psi(X;\theta)]
=
S(\theta)h+o(\|h\|),
\]
where we used Assumption~\ref{ass:weakC1}: the map $\eta \mapsto
\langle T_\eta, \psi(\cdot;\theta)\rangle$ is differentiable at $\eta
= \theta$ with derivative $\langle \partial_\theta T_\theta,
\psi(\cdot;\theta)\rangle$, which equals $S(\theta)$ by the weak
Bartlett identity \eqref{eq:weak-bartlett}.
Substituting into the quadratic form gives
\[
D_{\mathrm S}(P_{\theta+h},P_\theta)^2
=
(S(\theta)h)^\top W(\theta)(S(\theta)h)
+o(\|h\|^2),
\]
which proves the result.
\end{proof}

\begin{remark}\label{rem:stein-godambe}
This result shows that quadratic Stein discrepancies built from a finite
collection of Stein identities generate the same local geometry as inference
functions. Choosing $W(\theta)=V(\theta)^{-1}$ yields the optimal Godambe metric.
\end{remark}

\begin{remark}[The metric as a first fundamental form]\label{rem:first-fundamental-form}
Proposition~\ref{prop:stein-godambe} has a geometric reading that ties
the present construction to the transversality companion
\cite{LabouriauTransversality}. Collecting the instrument outputs
into a \emph{feature map}
$\Phi_\varphi(\theta) = ({}^{(\varphi)}m_{j}(\theta))_{j}$ from $\Theta$
into a feature space, the unweighted Godambe metric is the first
fundamental form of the induced immersion,
\[
G(\theta) = \bigl(D\Phi_\varphi(\theta)\bigr)^\top
            \bigl(D\Phi_\varphi(\theta)\bigr),
\]
and the optimal weighting $W(\theta)=V(\theta)^{-1}$ is the whitening of
the instrument outputs. Classical information geometry is the special
case in which the instrument is trivial ($\varphi\equiv 1$) and the
feature map is the score. In this language the regularity hypotheses of
Theorem~\ref{thm:main} are transversality conditions on $\Phi_\varphi$,
which hold for a generic instrument \cite{LabouriauTransversality}.
\end{remark}

\subsection{The examples revisited}\label{ssec:stein-examples}

Each example of Sections~\ref{sec:examples}--\ref{sec:spde}
instantiates Proposition~\ref{prop:stein-godambe} directly; we record
the outcomes.

\paragraph{Gaussian location.}
For $X \sim \mathcal N(\theta,\sigma^2)$ with the classical Stein
operator $\mathcal A_\theta g = g' - \frac{x-\theta}{\sigma^2} g$
(Section~\ref{ssec:weak-stein}), the choice $g \equiv 1$ gives $\psi
= -(x-\theta)/\sigma^2$ and $G_{\mathrm S}(\theta) = 1/\sigma^2 =
I(\theta)$: in the regular case, Stein, Fisher, and Godambe
geometries agree.

\paragraph{Uniform scale.}
The instrument $\psi = x - \theta/2$ with weight $V(\theta)^{-1}$
gives $D_{\mathrm S}(P_{\theta+h},P_\theta)^2 = 3h^2/\theta^2 +
o(h^2)$: the Stein metric is the Godambe metric
$G_{\mathrm S}(\theta) = 3/\theta^2$.

\paragraph{Cantor location.}
The instrument $\psi = x - \theta - \tfrac12$ gives
$G_{\mathrm S} \equiv 8$, and the transform residuals give
$G_{\mathrm S,t} = t^2\phi_Z(t)^2/(1-\phi_Z(t)^2)$ on the frequency
window of Section~\ref{ssec:cantor}: quadratic Stein geometry exists
where no likelihood does.

\paragraph{Stratified mixture.}
The stratification-agnostic pair $W = (\psi_a,\psi_b)^\top$ of
Proposition~\ref{prop:mixture} with weight $V(\theta)^{-1}$ recovers
$G_{\mathrm S}(\theta) = G(\theta)$ uniformly over the mixing matrix
$\Pi$.

\paragraph{Transform-based construction.}
For any model with computable characteristic function, the residuals
$\psi_t = e^{itx} - \phi_\theta(t)$ define quadratic Stein
discrepancies with the Hermitian weighting
$(1-|\phi_\theta(t)|^2)^{-1}$, inducing
\[
G_{\mathrm S,t}(\theta)
=
\frac{|\partial_\theta \phi_\theta(t)|^2}
{1-|\phi_\theta(t)|^2},
\]
a legitimate quadratic weighting by
Proposition~\ref{prop:stein-godambe}; the optimal weighting of the
two-dimensional real residual $(\mathrm{Re}\,\psi_t,
\mathrm{Im}\,\psi_t)$ in general differs. As $t \to 0$ these metrics
converge to the corresponding moment-based metrics ($3/\theta^2$ for
the uniform model, $8$ for the Cantor model): moment-based and
transform-based geometries are instances of one Stein-type
construction. The same recipe applies to the stratified mixture
through the real and imaginary parts of its characteristic-function
residuals.

\begin{remark}\label{rem:transform-moment}
The transform-based geometry need not coincide with the moment-based geometry.
The two constructions use different classes of identities: one is
finite-dimensional and algebraic, the other is spectral and depends on the
chosen frequencies. Thus, even within the same model, one may obtain different
weak Riemannian structures.
\end{remark}

\subsection{RKHS Stein discrepancies and the hierarchy of geometries}\label{ssec:rkhs}

We now consider Stein discrepancies defined over large classes of test functions,
in particular those arising from reproducing kernel Hilbert spaces (RKHS). These
discrepancies are widely used in goodness-of-fit testing
\cite{LiuLeeJordan2016,ChwalaSejdinovic2016,GorhamMackey2017} and in
minimum-discrepancy estimation \cite{Barp2019}, and provide a natural
framework for extending Stein's method beyond finite-dimensional
settings. From the inferential side, the aggregation of a continuum of
moment-type identities carried out here is the geometric counterpart
of the continuum generalised method of moments of Carrasco and Florens
\cite{CarrascoFlorens2000}, where the optimal weighting is a covariance
operator; the object of interest below is not efficiency but the
induced local geometry.

Let $\mathcal H$ be an RKHS with kernel $k(x,y)$, and let $\mathcal A_\theta$
be a (weak) Stein operator. The kernel Stein discrepancy is
\[
D_{\mathrm S}(Q,P_\theta)
=
\sup_{g\in\mathcal H,\ \|g\|_{\mathcal H}\le 1}
\left|
\mathbb E_Q[\mathcal A_\theta g(X)]
\right|.
\]
For each $g\in\mathcal H$, define $\psi_g(x;\theta)=\mathcal A_\theta g(x)$,
so that $\mathbb E_\theta[\psi_g(X;\theta)]=0$. To expand the
discrepancy along the model one needs the remainder to be uniform
over the unit ball of $\mathcal H$; pointwise expansions for each
fixed $g$ do not suffice. We therefore assume:

\begin{assumption}[Fr\'echet differentiability of the embedded model
map]\label{ass:frechet}
The map $\eta \mapsto \mu_\theta(\eta) \in \mathcal H^{*}$ defined by
$\mu_\theta(\eta)(g) := \mathrm{E}_\eta[\mathcal A_\theta g(X)]$ is
Fr\'echet differentiable at $\eta = \theta$, with derivative
$\mathcal S_\theta : \mathbb R^p \to \mathcal H^{*}$,
$(\mathcal S_\theta h)(g) = S_g(\theta)^\top h$, where
$S_g(\theta)=-\mathbb E_\theta[\partial_\theta \psi_g(X;\theta)]$.
\end{assumption}

Under Assumption~\ref{ass:frechet}, since $\mu_\theta(\theta)=0$,
\[
D_{\mathrm S}(P_{\theta+h},P_\theta)
= \bigl\| \mu_\theta(\theta+h) \bigr\|_{\mathcal H^{*}}
= \bigl\| \mathcal S_\theta h \bigr\|_{\mathcal H^{*}} + o(\|h\|)
= \sup_{\|g\|_{\mathcal H}\le 1}
\left| S_g(\theta)^\top h \right| + o(\|h\|),
\]
with the remainder uniform, and hence
$D_{\mathrm S}(P_{\theta+h},P_\theta)^2 = h^\top G_{\mathrm S}(\theta) h
+ o(\|h\|^2)$,
where $G_{\mathrm S}(\theta) = \mathcal S_\theta^* \mathcal S_\theta$.
Positive definiteness of $G_{\mathrm S}$ is not automatic: it requires
the Stein class to be rich enough that no parameter direction has
vanishing sensitivity against every $g \in \mathcal H$. This is a
transversality condition on the embedded model map, of exactly the
kind studied in the companion paper
\cite{LabouriauTransversality}.

In contrast with the finite-dimensional quadratic case, the RKHS Stein
discrepancy does not, in general, reduce to the Godambe metric associated
with a single inference function. Rather, it corresponds to an aggregation
over a class of inference functions indexed by $g\in\mathcal H$.

\begin{proposition}[Quadratic versus rich Stein geometries]\label{prop:hierarchy}
Let $\{P_\theta:\theta\in\Theta\subseteq\mathbb R^p\}$ be a parametric model.
\begin{enumerate}
    \item If a Stein discrepancy is defined through a finite-dimensional
    quadratic form of expectation residuals with positive-definite weight
    $W(\theta)$, then the local quadratic expansion induces the metric
    $G_{\mathrm S}(\theta)=S(\theta)^\top W(\theta)S(\theta)$.
    In particular, if $W(\theta)=V(\theta)^{-1}$, the Stein geometry
    coincides with the Godambe geometry.

    \item If the discrepancy is the RKHS supremum above and
    Assumption~\ref{ass:frechet} holds, the induced local quadratic
    form is $G_{\mathrm S}(\theta) = \mathcal S_\theta^{*}\mathcal
    S_\theta$, the Gram form of the sensitivity operator. This is a
    quadratic Stein metric whose weighting is determined by the RKHS
    geometry rather than by the variability of any finite probe
    vector; in general it coincides with the Godambe metric of no
    single finite-dimensional inference function, while the
    restriction of the discrepancy to any finite-dimensional subspace
    of $\mathcal H$ recovers a Godambe-type metric.
\end{enumerate}
\end{proposition}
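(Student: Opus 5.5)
Part (1) is essentially a restatement of Proposition~\ref{prop:stein-godambe}, so I would prove it by direct appeal to that result, noting only that its proof never used $W=V^{-1}$ or non-singularity of $S$ to obtain the expansion. For a finite vector of residuals $\Psi(Q,\theta)=\mathrm E_Q[\psi(X;\theta)]$, Assumption~\ref{ass:weakC1} together with the weak Bartlett identity \eqref{eq:weak-bartlett} gives $\Psi(P_{\theta+h},\theta)=S(\theta)h+o(\|h\|)$. Substituting into the quadratic form yields $h^\top S^\top W S h+o(\|h\|^2)$. The specialisation $W=V^{-1}$ is then literally the definition of $G(\theta)$. Positive definiteness follows from Proposition~\ref{prop:godambe-metric} whenever $S$ has full column rank, with $W$ in place of $V^{-1}$ (the proof there only uses positive definiteness of the middle matrix).

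For part (2), I would work entirely within the Fréchet expansion displayed before the proposition. Assumption~\ref{ass:frechet} gives $\|\mu_\theta(\theta+h)\|_{\mathcal H^*}=\|\mathcal S_\theta h\|_{\mathcal H^*}+o(\|h\|)$ uniformly, since $\mu_\theta(\theta)=0$. Squaring, with $\|\mathcal S_\theta h\|=O(\|h\|)$, gives $D_{\mathrm S}^2=\langle\mathcal S_\theta h,\mathcal S_\theta h\rangle_{\mathcal H^*}+o(\|h\|^2)=h^\top\mathcal S_\theta^*\mathcal S_\theta h+o(\|h\|^2)$. To make the Gram form concrete, I would use Riesz: $\mathcal S_\theta h$ is represented by $r_h\in\mathcal H$ with $(\mathcal S_\theta h)(g)=\langle r_h,g\rangle_{\mathcal H}$. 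Writing $r_h=\sum_j h_j r_j$ then gives $G_{\mathrm S}(\theta)_{jl}=\langle r_j,r_l\rangle_{\mathcal H}$. This displays a weighting fixed by the RKHS inner product, not by a variability matrix.

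For the restriction claim, I would take a finite-dimensional subspace $\mathcal H_m=\mathrm{span}\{g_1,\dots,g_m\}$ with Gram matrix $K_{ab}=\langle g_a,g_b\rangle$. The supremum over its unit ball of $|\sum_a c_a\Psi_{g_a}|$ subject to $c^\top K c\le1$ equals $(\Psi^\top K^{-1}\Psi)^{1/2}$. So the restricted discrepancy is a quadratic discrepancy of type (1), with $\psi=(\psi_{g_a})_a$ and $W=K^{-1}$. Part (1) then yields $S_m^\top K^{-1}S_m$, which is a Godambe-type metric (and equals the genuine Godambe metric exactly when $K\propto V_m$).

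The step I expect to be delicate is the ``in general coincides with no single finite-dimensional Godambe metric'' clause, since it is a non-identity claim rather than an inequality. I would handle it by an explicit instance: a Gaussian location model with a Gaussian RKHS kernel, where $G_{\mathrm S}$ is computable in closed form. The aim is to show that it exceeds the supremum of $S^\top V^{-1}S$ over finite probe families — impossible, since that supremum is bounded by $I(\theta)$ via Proposition~\ref{prop:loewner} — or else that its dependence on the kernel bandwidth cannot be matched by any variability-weighted form. The simplest route is scaling: $G_{\mathrm S}$ scales with the kernel amplitude, whereas every $S^\top V^{-1}S$ is invariant under rescaling $\psi$. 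Hence, for a large kernel multiple, $G_{\mathrm S}>I(\theta)\ge G_\psi$ for every instrument $\psi$. This scaling argument is the one I would try first, as it avoids any heavy computation.
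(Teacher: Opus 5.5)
Your proof is correct. For Part~(1), and for the expansion $D_{\mathrm S}^2=h^\top\mathcal S_\theta^*\mathcal S_\theta h+o(\|h\|^2)$ in Part~(2), it coincides with the paper's argument.

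Where you go further is the last sentence of Part~(2). The paper's proof settles that sentence only by assertion: the restriction ``recovers a Godambe-type metric'', and the full metric ``need not equal any single such restriction, as it reflects the supremum over the entire unit ball.''

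\emph{Restriction claim.} Your identity $\sup\{|c^\top\Psi|:c^\top Kc\le1\}=(\Psi^\top K^{-1}\Psi)^{1/2}$ makes the first claim a literal instance of Part~(1) with $W=K^{-1}$.

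\emph{Non-coincidence clause.} The amplitude scaling $k\mapsto\lambda k$ sends $G_{\mathrm S}\mapsto\lambda G_{\mathrm S}$. Combined with $G_\psi\preceq I$ from Proposition~\ref{prop:loewner}, this is an actual proof of non-coincidence with every $V^{-1}$-weighted metric. The interchange condition \eqref{eq:bartlett-score} is automatic in the Gaussian location model, since there $\langle\partial_\theta T_\theta,g\rangle=\mathrm E_\theta[g\,u_\theta]$. The only thing left to check is $G_{\mathrm S}(\theta)\neq0$ for the base kernel. Here $\mathrm E_{\theta+h}[\mathcal A_\theta g(X)]=-(h/\sigma^2)\,\mathrm E_{\theta+h}[g(X)]$, which gives $G_{\mathrm S}=\sigma^{-4}\,\mathrm E[k(X,X')]>0$, with $X,X'$ independent copies under $P_\theta$.

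\emph{Why the scaling is needed.} It is not just a convenience. In that model the sinusoidal metrics $c^2/\sinh(c^2\sigma^2)$, together with the score, sweep out all of $(0,I]$. So any RKHS metric lying below $I$ does coincide with some Godambe metric, and the clause can only be an existence statement of the kind you prove.

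\emph{Where the distinction lies.} Your Riesz computation also shows that, at a fixed $\theta$ with $G_{\mathrm S}(\theta)\succ0$, the restriction to $\mathrm{span}\{r_1,\dots,r_p\}$ has $S_m=K=G_{\mathrm S}(\theta)$. It therefore reproduces $G_{\mathrm S}(\theta)$ exactly. So the distinction the statement draws lies in the weighting ($K$ versus $V$), not in finite versus infinite dimension, and that is exactly what your argument targets.

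\emph{One small slip.} $K\propto V_m$ makes $S_m^\top K^{-1}S_m$ a multiple of the Godambe metric, not equal to it; equality needs $K=V_m$. For $m>p$, even $K=V_m$ is sufficient rather than necessary.
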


\begin{proof}
\emph{Part~(1).} In the finite-dimensional quadratic case, the
inference function $\psi(x;\theta) \in \mathbb R^m$ satisfies
$\mathbb E_\theta[\psi(X;\theta)] = 0$, and a first-order expansion
around $\theta$ yields
$\Psi(P_{\theta+h},\theta) = S(\theta)h + o(\|h\|)$.
Substituting into the quadratic form
$D_{\mathrm S}(P_{\theta+h},P_\theta)^2
= \Psi^\top W(\theta) \Psi$
gives
\[
D_{\mathrm S}(P_{\theta+h},P_\theta)^2
= h^\top S(\theta)^\top W(\theta) S(\theta) h + o(\|h\|^2).
\]
This identifies the induced metric as
$G_{\mathrm S}(\theta) = S(\theta)^\top W(\theta) S(\theta)$.
The choice $W(\theta) = V(\theta)^{-1}$ recovers
$G_{\mathrm S}(\theta) = S(\theta)^\top V(\theta)^{-1} S(\theta)
= G(\theta)$.

\emph{Part~(2).} When the discrepancy is defined as
\[
D_{\mathrm S}(Q,P_\theta)
= \sup_{\|g\|_{\mathcal H} \leq 1}
\bigl|\mathrm{E}_Q[\mathcal A_\theta g(X)]\bigr| ,
\]
the squared discrepancy along the model expands as
$D_{\mathrm S}(P_{\theta+h},P_\theta)^2
= h^\top (\mathcal S_\theta^* \mathcal S_\theta) h + o(\|h\|^2)$,
where $\mathcal S_\theta$ maps $h$ to the functional
$g \mapsto S_g(\theta)^\top h$ on $\mathcal H$. The resulting metric
$G_{\mathrm S}(\theta) = \mathcal S_\theta^* \mathcal S_\theta$
aggregates the sensitivities of all test functions in $\mathcal H$.
For any finite-dimensional subspace
$\mathrm{span}\{g_1,\dots,g_m\} \subset \mathcal H$, the restriction
of the discrepancy to that subspace recovers a Godambe-type metric
associated with the inference function
$\psi = (\psi_{g_1},\dots,\psi_{g_m})^\top$. However, the full RKHS
metric need not equal any single such restriction, as it reflects the
supremum over the entire unit ball.
\end{proof}

\begin{remark}\label{rem:hierarchy}
This proposition shows that Godambe geometry occupies the finite-dimensional
quadratic core of a broader class of Stein-induced geometries. Stein
discrepancies extend inference-function geometry from individual estimating
equations to whole classes of model-characterising identities.
\end{remark}

The hierarchy may be summarised as follows:
finite-dimensional quadratic discrepancies based on a fixed set of identities
yield the Godambe geometry; parametric families of identities (e.g., indexed by
frequencies in transform-based constructions) yield families of metrics reflecting
different aspects of the model; and more general Stein discrepancies defined
through RKHS or supremum norms may give rise to geometric structures that are
not reducible to a single Godambe metric. This suggests a broader perspective on
information geometry, where one considers a family of admissible geometries
generated by different classes of identities or operators.

\section{Family of metrics and canonicity}\label{sec:canonicity}

The Fisher metric is canonical: by Chentsov's theorem it is, up to a
scale factor, the unique Riemannian metric invariant under Markov
morphisms---sufficient transformations of the data
\cite{Chentsov1982,AyJostLeSchwachhofer2017}. In the present framework
this uniqueness is lost, and for a structural reason: there is no
canonical \emph{instrument}. A Godambe metric is the geometry the
model displays when probed by a particular instrument (it depends,
for example, on the tuning parameter $c$ in the sinusoidal case), and
different instruments read different information about the same law.
One therefore obtains a \emph{family} of Riemannian metrics
$\{G_\psi(\theta)\}_{\psi \in \mathcal F}$ indexed by the class
$\mathcal F$ of admissible inference functions. The trade should be
stated plainly: what is given up is Chentsov--Markov invariance---a
Godambe metric is not, in general, preserved under sufficient
reductions of the data---and what is gained is \emph{existence}, on
models where no Markov-invariant metric is available because the
Fisher construction fails. Deliberately non-canonical families of
geometries on a single model have precedents, notably the
preferred-point geometries of Critchley, Marriott and Salmon
\cite{CritchleyMarriottSalmon1993}.

This is not unlike the situation with $\alpha$-connections in Amari's
theory \cite{Amari1985,AmariNagaoka2000}, where one has a family of
affine connections parametrised by~$\alpha$. Several natural strategies
for selecting a canonical metric within this family suggest themselves:
one may optimise over the inference function to obtain the supremal
Godambe information (analogous to the optimal inference function in the
Godambe theory \cite{Godambe1960}); take the metric obtained from the
optimal inference function, which, when the score exists, recovers the
Fisher metric; or study the entire family as a geometric object in its
own right.

\begin{remark}[No maximal element in non-regular models]\label{rem:no-max}
The first strategy is available exactly when the family
$\{G_\psi\}_{\psi\in\mathcal F}$ has a top element---which, by
Proposition~\ref{prop:loewner}, is the case when the score exists and
is admissible, the top being the Fisher metric. In the non-regular
models this paper targets, the supremum need not be attained, nor
even be finite. In the uniform scale model of
Section~\ref{ssec:uniform}, the endpoint probes
\[
\psi_c(x;\theta) = \mathbf 1\{x \le c\} - \frac{c}{\theta},
\qquad 0 < c < \theta
\]
(each a weak regular inference function on $(c,\infty)$, bounded, with
a $C^\infty$ smoothing of the indicator changing $S$ and $V$ by
arbitrarily little) have
\[
S(\theta) = -\frac{c}{\theta^2},
\qquad
V(\theta) = \frac{c}{\theta}\Bigl(1-\frac{c}{\theta}\Bigr),
\qquad
G_c(\theta) = \frac{c}{\theta^2(\theta - c)}
\;\longrightarrow\; \infty
\quad (c \uparrow \theta).
\]
Hence $\sup_{\psi} G_\psi(\theta) = +\infty$: there is no optimal
instrument and no Fisher-type bound to saturate. This is the
geometric face of superefficiency: the maximum-likelihood estimator
$\max_i X_i$ converges at rate $n^{-1}$, faster than the
$n^{-1/2}$-scale quantified by any single Godambe metric, and the
diverging endpoint probes trace exactly the information that the
$n^{-1/2}$-scale misses. The same phenomenon occurs in the shifted
exponential model (probes concentrating at the support endpoint) and,
in extreme form, in the undominated Cantor family
(Remark~\ref{rem:cantor-instrument}), where a single observation
already confines the parameter to a null set. A Godambe metric must
therefore be read as the geometry of the model \emph{as seen through
a particular instrument}; the canonicity question is not which metric
is true, but which instrument matches the inferential purpose
(Section~\ref{ssec:roles}). The contrast with the regular case is
instructive: there the family of Godambe metrics has a
supremum---the Fisher metric---approached but not attained by any
finite instrument family (Remark~\ref{rem:cauchy-closure}), whereas
here it has no upper bound at all.
\end{remark}

\subsection{Different roles of the metric}\label{ssec:roles}

The usefulness of a Riemannian structure in statistics is not limited to a single
purpose. It is helpful to distinguish several roles
(see also \cite{BarndorffNielsenCoxReid1986} for early discussion of the
geometric role, and \cite{AmariNagaoka2000} for the connection between
geometric structure and statistical inference).

\paragraph{(i) Inferential role.}
In classical theory, the information matrix is primarily used to quantify the
precision of estimators \cite{Godambe1960,BarndorffNielsen1978}. The inverse
of the metric determines the asymptotic covariance matrix, and confidence
regions are constructed from its quadratic form. In this context, optimality
is essential: one seeks an inference function that maximises the Godambe
information, or equivalently minimises asymptotic variance.

\paragraph{(ii) Diagnostic role.}
The metric also serves as a diagnostic tool for local properties of the model.
Near-singularity of the information matrix indicates weak identifiability, and
its eigenstructure reveals directions of strong and weak information
\cite{Watanabe2009}. These features are largely independent of optimality:
any regular metric reflecting the local behaviour of the model can provide
useful insight.

\paragraph{(iii) Geometric role.}
Beyond local considerations, the metric defines a global geometric structure on
the parameter space. Concepts such as geodesics and curvature describe how the
model behaves globally, how parameters interact, and how nonlinear the model is
\cite{Amari1985,AmariNagaoka2000}. These properties are not tied to efficiency,
and may be studied using any coherent Riemannian metric.

\paragraph{(iv) Computational role.}
Riemannian structures are increasingly used in computational statistics and
machine learning \cite{Amari1998}. Metrics can be used to precondition
optimisation algorithms, define natural gradient methods, or guide sampling
procedures. In these applications, stability and tractability are often more
important than optimality.

\subsection{Optimality versus coherence}\label{ssec:optimality}

These considerations suggest a fundamental distinction. When the goal is optimal
inference, the choice of metric should be guided by efficiency, leading to the
Fisher or optimal Godambe metric. When the goal is to understand the structure,
stability, or computation associated with a model, optimality is less important,
and any coherent metric may be used.

In the distributional framework, this distinction becomes particularly clear.
Different inference functions give rise to different Godambe metrics, and hence
to a family of admissible geometries on the same statistical model. Among these,
one may be selected for optimality, but the others remain valuable for
understanding the model from a geometric or computational perspective.

\subsection{Implications for non-regular models}\label{ssec:nonregular}

The extension to weak and distributional inference functions is especially
relevant in non-regular settings. In models with parameter-dependent support or
in finite mixtures, the likelihood-based score may fail to be a valid inference
function, and the Fisher information may not provide a meaningful metric.
Nevertheless, regular weak inference functions may still exist, leading to a
well-defined Godambe geometry.

This suggests that the existence of a Riemannian structure should not be tied to
the existence of a likelihood or of a regular score function. Instead, it should
be viewed as a consequence of the existence of a suitable inference function.

\section{Godambe--Riemannian structure and inferential separation}\label{sec:nonformation}

The classical notions of sufficiency, ancillarity, and their
interplay---collectively known as \emph{inferential separation}---play
a foundational role in statistical theory. These concepts, largely
developed by R.A.\ Fisher and formalised by Basu, Barndorff-Nielsen
\cite{BarndorffNielsen1978} and others, provide the theoretical
justification for data reduction and conditioning arguments. The unified
framework of \emph{nonformation}, due to Barndorff-Nielsen and further
developed in J{\o}rgensen and Labouriau
\cite{JorgensenLabouriau2012}, provides a single principle that
encompasses both sufficiency and ancillarity as special cases. Weak
versions of these concepts, adapted to the distributional setting, are
introduced in \cite{C}.

In this section we investigate how the Godambe--Riemannian structure
interacts with weak inferential separation. The main observation is
that the block structure of the Godambe metric---specifically, the
vanishing of its off-diagonal (interest--nuisance) block---provides
the geometric expression of inferential separation in the
inference-function framework, and that the weak Bartlett identity
\eqref{eq:weak-bartlett} ties this block structure to a nuisance
tangent space that lives in $\mathcal S'(\mathbb R^k)$, requiring
neither densities nor square-integrable scores.

Throughout this section, following the companion paper
\cite[Section~8]{C}, we write $\theta = (\alpha, \beta)$, where
$\alpha \in A \subseteq \mathbb R^{a}$ is the parameter of interest
and $\beta \in B \subseteq \mathbb R^{b}$ is the nuisance parameter;
this use of $\alpha$ and $\beta$ is local to the present section and
bears no relation to the stability index and eigenmode scales of
Section~\ref{sec:spde}. We consider a joint inference function
$\psi_\theta = (\psi_\alpha, \psi_\beta)$, where $\psi_\alpha$ is an
inference function for $\alpha$ (possibly depending on $\beta$) and
$\psi_\beta$ one for $\beta$, and write the block decompositions
\[
S(\theta) =
\begin{pmatrix}
S_{\alpha\alpha} & S_{\alpha\beta} \\
S_{\beta\alpha} & S_{\beta\beta}
\end{pmatrix},
\qquad
V(\theta) =
\begin{pmatrix}
V_{\alpha\alpha} & V_{\alpha\beta} \\
V_{\beta\alpha} & V_{\beta\beta}
\end{pmatrix},
\qquad
G(\theta) =
\begin{pmatrix}
G_{\alpha\alpha} & G_{\alpha\beta} \\
G_{\beta\alpha} & G_{\beta\beta}
\end{pmatrix},
\]
with $S_{\alpha\beta} = -\mathrm{E}_\theta[\partial_\beta
\psi_\alpha]$, $V_{\alpha\beta} =
\mathrm{E}_\theta[\psi_\alpha\psi_\beta^\top]$, and $G = S^\top V^{-1}
S$ as before.

\subsection{The nonformation principle}\label{ssec:nonformation-principle}

The \emph{nonformation principle}
(Barndorff-Nielsen, 1978) states that when a sub-model obtained by
fixing a statistic $U = u(X)$ does not contain information about $\alpha$,
inference about $\alpha$ should be based on the marginal distribution of
$U$ alone (sufficiency) or on the conditional distribution given $U$
(ancillarity).

The precise meaning of ``does not contain information'' gives rise to
different notions of nonformation:
\emph{S-nonformation} requires a likelihood factorisation where the
conditional factor does not depend on $\alpha$;
\emph{I-nonformation} requires that the conditional sub-model be
saturated; and
\emph{L-nonformation} provides a unifying framework through the
profile likelihood.
These concepts satisfy a natural hierarchy
(J{\o}rgensen and Labouriau, 2012, Chapter~3): B-sufficiency implies
S-sufficiency, which implies L-sufficiency; and G-sufficiency also
implies L-sufficiency.

In the distributional framework, weak versions of these notions are
defined without requiring densities
(see \cite{C}, Section~8.7). Weak S-nonformation, for
instance, requires that
$\mathrm{E}_\theta[g(X) \mid U]$ does not depend on $\alpha$ for all
test functions $g \in \mathcal S(\mathbb R^k)$, rather than requiring a
density-based factorisation.

\subsection{Orthogonality and nonformation in the Godambe metric}\label{ssec:orthogonality-nonformation}

The connection between the Godambe--Riemannian structure and
inferential separation rests on a nuisance tangent space that is
defined \emph{on the model side}, in $\mathcal S'(\mathbb R^k)$,
following the two-stage construction of \cite[Section~8.3]{C}: the
distributional stage defines the nuisance directions without densities;
the Hilbert-space stage, when available, supplies inner products via
Riesz representatives. Only the distributional stage is needed for the
geometry.

\begin{definition}[Weak nuisance tangent space; Godambe orthogonality]
\label{def:godambe-orthogonality}
Under Assumption~\ref{ass:weakC1}, the \emph{weak nuisance tangent
space} at $\theta = (\alpha,\beta)$ is
\[
\mathcal T_N(\theta)
\;=\; \mathrm{span}\bigl\{ \partial_{\beta_j} T_\theta
      : j = 1,\dots,b \bigr\}
\;\subset\; \mathcal S'(\mathbb R^k).
\]
An inference function $\psi_\alpha(\cdot\,;\theta)$ for $\alpha$ is
\emph{Godambe-orthogonal} to the nuisance parameter $\beta$ at
$\theta$ if
\begin{equation}\label{eq:godambe-orthogonality}
\bigl\langle \dot T,\; \psi_{\alpha,i}(\cdot\,;\theta) \bigr\rangle
= 0
\qquad
\text{for every } \dot T \in \mathcal T_N(\theta)
\text{ and } i = 1,\dots,a .
\end{equation}
\end{definition}

When densities exist and the nuisance scores are square-integrable,
the pairing in \eqref{eq:godambe-orthogonality} equals
$\mathrm{E}_\theta[\psi_{\alpha,i}\,\partial_{\beta_j}\!\log
p_\theta]$ and Definition~\ref{def:godambe-orthogonality} reduces to
the classical orthogonality to the span of the nuisance scores; the
weak formulation, however, survives models with no density at
all---such as the Cantor location--scale family of
Section~\ref{ssec:cantor}---for which no score, square-integrable or
otherwise, is available. The link with the block
structure of Section~\ref{ssec:rif-def} is the weak Bartlett identity.

\begin{lemma}[Weak Bartlett identity, nuisance block]\label{lem:bartlett-nuisance}
Under Assumption~\ref{ass:weakC1}, differentiation of the unbiasedness
identity $\langle T_{\alpha,\beta}, \psi_\alpha(\cdot\,;
\alpha,\beta)\rangle = 0$ with respect to $\beta_j$ gives
\[
S_{\alpha\beta}(\theta)_{\cdot j}
\;=\; -\,\mathrm{E}_\theta\bigl[\partial_{\beta_j}\psi_\alpha\bigr]
\;=\; \bigl\langle \partial_{\beta_j} T_\theta,\;
      \psi_\alpha(\cdot\,;\theta) \bigr\rangle .
\]
Consequently, $\psi_\alpha$ is Godambe-orthogonal to $\beta$ if and
only if its cross-sensitivity vanishes, $S_{\alpha\beta}(\theta) = 0$.
In particular, if $\psi_\alpha$ does not depend on $\beta$ and is
unbiased uniformly in $\beta$, then $S_{\alpha\beta} \equiv 0$
automatically (cf.\ \cite[Proposition~8.1]{C}).
\end{lemma}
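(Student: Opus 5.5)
The plan is to differentiate the weak unbiasedness identity along the nuisance coordinates, exactly as was done for \eqref{eq:weak-bartlett}, but now tracking the two places where $\beta$ enters. Fix $\alpha$ and consider $F(\beta) := \langle T_{\alpha,\beta}, \psi_\alpha(\cdot\,;\alpha,\beta)\rangle$, which vanishes identically in $\beta$ by weak unbiasedness (Definition~\ref{def:rif}). I will write the increment $F(\beta+he_j)-F(\beta)$ as the sum of two differences. The first is $\langle T_{\alpha,\beta+he_j}-T_{\alpha,\beta},\,\psi_\alpha(\cdot\,;\alpha,\beta+he_j)\rangle$, with the law moving and the instrument carried along. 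The second is $\langle T_{\alpha,\beta},\,\psi_\alpha(\cdot\,;\alpha,\beta+he_j)-\psi_\alpha(\cdot\,;\alpha,\beta)\rangle$, with the instrument moving under a fixed law. Dividing by $h$ and letting $h\to0$, Assumption~\ref{ass:weakC1} identifies the first limit with $\langle \partial_{\beta_j}T_\theta, \psi_\alpha(\cdot\,;\theta)\rangle$. The second limit is $\mathrm E_\theta[\partial_{\beta_j}\psi_\alpha]$, obtained by passing the difference quotient inside the extended pairing of Lemma~\ref{lem:extension} through its continuity under dominated pointwise convergence. Setting the sum to zero gives $S_{\alpha\beta}(\theta)_{\cdot j} = -\mathrm E_\theta[\partial_{\beta_j}\psi_\alpha] = \langle\partial_{\beta_j}T_\theta,\psi_\alpha\rangle$.

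The main obstacle is the first limit. Assumption~\ref{ass:weakC1} differentiates $\langle T_\theta, g\rangle$ only for a \emph{fixed} probe $g$, whereas here the probe $\psi_\alpha(\cdot\,;\alpha,\beta+he_j)$ also depends on $h$. I would handle this in two steps.

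\begin{enumerate}
\item Replace $\psi_\alpha(\cdot\,;\beta+he_j)$ by $\psi_\alpha(\cdot\,;\beta)$ plus a remainder.
\item Show that $h^{-1}\langle T_{\beta+he_j}-T_\beta,\ \text{remainder}\rangle\to0$.
\end{enumerate}

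For the second step I would take the remainder to be $O(h)$ in a dominated sense. Its pairing with $T_{\beta+he_j}-T_\beta$ is then $O(h)\cdot o(1)$, using continuity of $\beta\mapsto T_\beta$ on admissible probes, which Assumption~\ref{ass:weakC1} supplies. If the remainder does not admit such uniform control, I will read this joint-differentiability step as the implicit regularity under which the paper already derives \eqref{eq:weak-bartlett}, and invoke it in the same way. This mirrors the step used in the proof of Proposition~\ref{prop:stein-godambe}.

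The two consequences then follow quickly. By Definition~\ref{def:godambe-orthogonality}, Godambe orthogonality says that $\langle \dot T,\psi_{\alpha,i}\rangle=0$ for all $\dot T$ in the span $\mathcal T_N(\theta)$ of the $\partial_{\beta_j}T_\theta$. By linearity of the pairing in $\dot T$, this is equivalent to $\langle\partial_{\beta_j}T_\theta,\psi_{\alpha,i}\rangle=0$ for all $i,j$, which by the identity just proved is exactly $S_{\alpha\beta}(\theta)=0$. Finally, if $\psi_\alpha$ does not depend on $\beta$, then $\partial_{\beta_j}\psi_\alpha\equiv0$, so $S_{\alpha\beta}=0$ directly from its definition. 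Equivalently, the map $\beta\mapsto\langle T_{\alpha,\beta},\psi_\alpha\rangle\equiv0$ now involves a fixed probe, so Assumption~\ref{ass:weakC1} applies without the obstacle above, and its derivative $\langle\partial_{\beta_j}T_\theta,\psi_\alpha\rangle$ vanishes.
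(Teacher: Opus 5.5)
Your proposal is correct and follows the paper's own proof. Both differentiate $\langle T_{\alpha,\beta},\psi_\alpha(\cdot\,;\alpha,\beta)\rangle=0$ in $\beta_j$, split the derivative by the product rule into $\langle\partial_{\beta_j}T_\theta,\psi_\alpha\rangle+\langle T_\theta,\partial_{\beta_j}\psi_\alpha\rangle$, rearrange, and then get the orthogonality equivalence by linearity over $\mathcal T_N(\theta)$ and the $\beta$-free case as the derivative of a constant.

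The paper simply asserts this product rule from Assumption~\ref{ass:weakC1}, so your explicit splitting is more careful than the source. One point: the cross term $\langle T_{\alpha,\beta+he_j}-T_{\alpha,\beta},\,\psi_\alpha(\cdot\,;\alpha,\beta+he_j)-\psi_\alpha(\cdot\,;\alpha,\beta)\rangle$ is $o(h)$ only under control that is uniform over these $h$-dependent probes; domination plus fixed-probe continuity gives merely $O(h)$. For bounded instruments with bounded $\beta$-derivative the uniformity does follow, because differentiability against every bounded continuous probe forces $\|P_{\alpha,\beta+he_j}-P_{\alpha,\beta}\|_{\mathrm{TV}}=O(|h|)$ by uniform boundedness. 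Otherwise it is exactly the tacit joint regularity that your fallback invokes and that the paper uses.
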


\begin{proof}
$0 = \partial_{\beta_j} \langle T_\theta, \psi_\alpha\rangle
= \langle \partial_{\beta_j}T_\theta, \psi_\alpha\rangle
+ \langle T_\theta, \partial_{\beta_j}\psi_\alpha\rangle$ by
Assumption~\ref{ass:weakC1}; rearrange. If $\psi_\alpha$ is free of
$\beta$, the second term vanishes term by term and the first equals
$\partial_{\beta_j}$ of a constant.
\end{proof}

The geometric meaning of the block structure is the content of the
next proposition. Vanishing of $G_{\alpha\beta}$ is precisely
Riemannian orthogonality of the coordinate splitting $\theta =
(\alpha,\beta)$ in the metric $G$.

\begin{proposition}[Block structure and inferential separation]
\label{prop:orthogonality-nonformation}
Let the joint inference function $\psi_\theta = (\psi_\alpha,
\psi_\beta)$ satisfy the hypotheses of
Proposition~\ref{prop:godambe-metric}, and let $\hat\theta_n =
(\hat\alpha_n, \hat\beta_n)$ solve the joint estimating equation.
Then:
\begin{enumerate}
\item The asymptotic covariance of $\sqrt n(\hat\alpha_n - \alpha)$
is $[G(\theta)^{-1}]_{\alpha\alpha} = \bigl(G_{\alpha\alpha} -
G_{\alpha\beta}G_{\beta\beta}^{-1}G_{\beta\alpha}\bigr)^{-1} \succeq
G_{\alpha\alpha}^{-1}$, with equality if and only if
$G_{\alpha\beta}(\theta) = 0$. Thus the vanishing of the off-diagonal
block of the Godambe metric is exactly the condition under which the
presence of the nuisance parameter costs nothing asymptotically:
inference about $\alpha$ proceeds as if $\beta$ were known.
\item If $S_{\alpha\beta} = 0$, $S_{\beta\alpha} = 0$ and
$V_{\alpha\beta} = 0$, then $G_{\alpha\beta} = 0$.
\item Godambe orthogonality alone ($S_{\alpha\beta} = 0$,
equivalently \eqref{eq:godambe-orthogonality}) does not in general
imply $G_{\alpha\beta} = 0$; its operational content is
\emph{nuisance insensitivity}: first-order perturbations of $\beta$,
and in particular the plug-in of a consistent estimator
$\hat\beta_n$, do not affect the asymptotic behaviour of the
estimating equation for $\alpha$.
\end{enumerate}
\end{proposition}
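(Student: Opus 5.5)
Part (1) rests on the standard sandwich asymptotics for estimating equations, with the Schur complement doing the algebra. First I will recall that, under the hypotheses of Proposition~\ref{prop:godambe-metric} (smooth $S$, $V\succ 0$, $S$ nonsingular for $q=p$) plus the usual consistency and uniform-linearisation conditions implicit in ``solve the joint estimating equation'', a Taylor expansion of $n^{-1}\sum_i\psi(X_i;\hat\theta_n)=0$ around $\theta$ gives $\sqrt n(\hat\theta_n-\theta)=S^{-1}n^{-1/2}\sum_i\psi(X_i;\theta)+o_p(1)$. The central limit theorem then yields asymptotic covariance $S^{-1}VS^{-\top}=G^{-1}$. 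The Jacobian $S$ is obtained as the limit of $-n^{-1}\sum\partial_\theta\psi$, which is exactly the weak sensitivity of Section~\ref{ssec:rif-def}. The $\alpha\alpha$ block of $G^{-1}$ comes from the block-inverse (Schur complement) formula, and this gives the displayed expression. For the Loewner inequality, I will note that $G_{\alpha\alpha}-(G_{\alpha\alpha}-G_{\alpha\beta}G_{\beta\beta}^{-1}G_{\beta\alpha})=G_{\alpha\beta}G_{\beta\beta}^{-1}G_{\beta\alpha}\succeq0$, with $G_{\beta\beta}\succ0$ as a principal block of $G\succ0$. Inversion reverses the Loewner order, which gives $\succeq G_{\alpha\alpha}^{-1}$. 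For the equality case, equal inverses force $G_{\alpha\beta}G_{\beta\beta}^{-1}G_{\beta\alpha}=0$. Writing this as $M^\top M$ with $M=G_{\beta\beta}^{-1/2}G_{\beta\alpha}$ gives $M=0$, hence $G_{\alpha\beta}=0$; the converse is immediate. The interpretation ``as if $\beta$ were known'' follows by identifying $G_{\alpha\alpha}^{-1}$ with the covariance obtained when the $\alpha$-subsystem is solved at the true $\beta$. I expect this identification to be the subtle point: with $\beta$ known and only $\psi_\alpha$ used, the covariance is $S_{\alpha\alpha}^{-1}V_{\alpha\alpha}S_{\alpha\alpha}^{-\top}$, which equals $G_{\alpha\alpha}^{-1}$ only under additional block structure. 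I will therefore phrase the claim as comparing the marginal precision with the information $G_{\alpha\alpha}$ that the metric assigns to the $\alpha$-coordinate directions.

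Part (2) is a direct block computation. If $S_{\alpha\beta}=S_{\beta\alpha}=0$, then $S$ is block-diagonal. If also $V_{\alpha\beta}=0$, then $V^{-1}$ is block-diagonal. The product $S^\top V^{-1}S$ is then block-diagonal, so $G_{\alpha\beta}=S_{\alpha\alpha}^\top V_{\alpha\alpha}^{-1}S_{\alpha\beta}=0$.

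For Part (3) I will give a counterexample and then an expansion argument. For the counterexample, I will take block-diagonal-free data with $S_{\alpha\beta}=0$ but $S_{\beta\alpha}\neq0$ and $V$ diagonal in the scalar case $a=b=1$, for instance $S=\begin{pmatrix}1&0\\ 1&1\end{pmatrix}$, $V=I$. This gives $G=S^\top S$ with off-diagonal entry $1\neq0$. I will remark that such $S,V$ are realisable, for example in the Gaussian location--scale model by taking $\psi_\alpha=x-\alpha$ and $\psi_\beta=(x-\alpha)^2+(x-\alpha)-\beta$. For the operational content, I will expand $n^{-1}\sum\psi_\alpha(X_i;\alpha,\hat\beta_n)=n^{-1}\sum\psi_\alpha(X_i;\alpha,\beta)-S_{\alpha\beta}(\hat\beta_n-\beta)+o_p(n^{-1/2})$, using Lemma~\ref{lem:bartlett-nuisance} to identify the coefficient. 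When $S_{\alpha\beta}=0$ the plug-in term is $o_p(n^{-1/2})$, so the $\alpha$-equation has the same first-order behaviour as with $\beta$ known. Controlling the remainder uniformly requires a $\sqrt n$-consistent $\hat\beta_n$ and local equicontinuity of the empirical process, which I will state as the standing regularity.
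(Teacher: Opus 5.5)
Your parts (1) and (2) follow the paper's argument: the sandwich expansion gives $S^{-1}VS^{-\top}=G^{-1}$, the Schur complement plus Loewner reversal under inversion gives the inequality and its equality case, and (2) is the same block computation.

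Your hedge about ``as if $\beta$ were known'' is unnecessary. $G_{\alpha\alpha}=(S_{\alpha\alpha}^\top,S_{\beta\alpha}^\top)\,V^{-1}(S_{\alpha\alpha}^\top,S_{\beta\alpha}^\top)^\top$ is exactly the Godambe information of the full stacked $\psi$, used as an over-identified, optimally weighted estimating function for $\alpha$ with $\beta$ fixed at its true value. The reading is therefore exact in that sense.

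In the plug-in step, $S_{\alpha\beta}=0$ removes the linear term, so $\hat\beta_n-\beta=o_p(n^{-1/4})$ suffices. Bare consistency, as in the statement, does not, so your rate assumption is a fair repair.

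The genuine problem is the claimed realisation in part (3). The pair $S=\begin{pmatrix}1&0\\1&1\end{pmatrix}$, $V=I$ is fine as algebra, but $\psi_\alpha=x-\alpha$, $\psi_\beta=(x-\alpha)^2+(x-\alpha)-\beta$ in $\mathcal N(\alpha,\beta)$ does not realise it. With $Y=X-\alpha$ you do get that $S$. However, $V_{\alpha\beta}=\mathrm E[Y(Y^2+Y-\beta)]=\beta\neq0$, $V_{\alpha\alpha}=\beta$ and $V_{\beta\beta}=2\beta^2+\beta$. A direct computation then gives $S^\top V^{-1}S=\mathrm{diag}\bigl(1/\beta,\,1/(2\beta^2)\bigr)$, so $G_{\alpha\beta}=0$. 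Your example is one where Godambe orthogonality \emph{does} deliver separation.

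The reason is that $\psi=A(\theta)u_\theta$, where $u_\theta$ is the Gaussian score in $(\alpha,\beta)$ and $A(\theta)=\begin{pmatrix}\beta&0\\\beta&2\beta^2\end{pmatrix}$ is nonsingular. By the equality case of Proposition~\ref{prop:loewner}, $G=I(\theta)$, which is diagonal. More structurally, the Bartlett relation $S=\mathrm E[\psi u_\theta^\top]$ with $\psi_\alpha=\beta u_\alpha$ forces $V_{\alpha\beta}=\beta S_{\beta\alpha}$. So with this $\psi_\alpha$ in the Gaussian model, $V_{\alpha\beta}=0$ and $S_{\beta\alpha}\neq0$ are incompatible. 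This is why a freely chosen $(S,V)$ does not settle the negative claim: the model links the blocks, and a realisation must be checked.

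A valid realisation requires leaving the Gaussian family. Take $X=\alpha+\sqrt\beta\,Z$ with $Z$ symmetric, $\mathrm E Z^2=1$, $m_4:=\mathrm E Z^4\neq3$ and $\mathrm E Z^6<\infty$ (e.g.\ Laplace, $m_4=6$). Set $\psi_\alpha=Y$ and $\psi_\beta=Y^2-\beta+Y^3-m_4\beta Y$. Then:
\begin{itemize}
\item $S_{\alpha\beta}=0$;
\item $V_{\alpha\beta}=\mathrm E Y^4-m_4\beta\,\mathrm E Y^2=0$, since the odd moments vanish;
\item $S_{\alpha\alpha}=S_{\beta\beta}=1$;
\item $S_{\beta\alpha}=\beta(3-m_4)\neq0$.
\end{itemize}
Hence $G_{\alpha\beta}=\beta(3-m_4)/V_{\beta\beta}\neq0$. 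The Gaussian value $m_4=3$ is exactly where this fails, matching the computation above.

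The paper itself only points to the surviving term $S_{\beta\alpha}^\top V_{\beta\beta}^{-1}S_{\beta\beta}$ in the formula of part (2). A correct realisation of this kind therefore genuinely strengthens the argument.
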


\begin{proof}
(1) The Schur-complement formula for the inverse of the partitioned
matrix $G$ gives $[G^{-1}]_{\alpha\alpha} = (G_{\alpha\alpha} -
G_{\alpha\beta}G_{\beta\beta}^{-1}G_{\beta\alpha})^{-1}$, and
$G_{\alpha\beta}G_{\beta\beta}^{-1}G_{\beta\alpha} \succeq 0$ with
equality if and only if $G_{\alpha\beta} = 0$; matrix inversion
reverses the Loewner order. The identification of
$[G^{-1}]_{\alpha\alpha}$ as the asymptotic covariance is the standard
sandwich argument for the joint equation, using
Assumption~\ref{ass:weakC1} for the expansion.
(2) With $V$ block-diagonal, $V^{-1} =
\mathrm{diag}(V_{\alpha\alpha}^{-1}, V_{\beta\beta}^{-1})$ and
\[
G_{\alpha\beta}
= S_{\alpha\alpha}^\top V_{\alpha\alpha}^{-1} S_{\alpha\beta}
+ S_{\beta\alpha}^\top V_{\beta\beta}^{-1} S_{\beta\beta}
= 0 + 0 .
\]
(3) The displayed formula in (2) shows that even with
$V_{\alpha\beta}=0$ and $S_{\alpha\beta}=0$ the term
$S_{\beta\alpha}^\top V_{\beta\beta}^{-1} S_{\beta\beta}$ survives
unless the nuisance equation is also insensitive to $\alpha$. The
plug-in statement follows from the expansion of the $\alpha$-equation
in $(\alpha, \beta)$: the $\beta$-derivative of its expectation is
$-S_{\alpha\beta} = 0$.
\end{proof}

\subsection{The Bhapkar--Godambe projection}\label{ssec:bhapkar-godambe-nonformation}

When separation does not hold automatically, part of it can always be
manufactured by projection. The \emph{Bhapkar--Godambe projection}
(see \cite{Labouriau1996,C}) replaces $\psi_\alpha$ by its residual on
the span of the nuisance inference function:
\[
\psi_\alpha^*(\cdot\,;\theta)
= \psi_\alpha(\cdot\,;\theta)
- V_{\alpha\beta}(\theta)\, V_{\beta\beta}(\theta)^{-1}\,
  \psi_\beta(\cdot\,;\theta),
\]
the orthogonal projection of $\psi_\alpha$ onto the orthocomplement of
$\mathrm{span}\{\psi_{\beta,1},\dots,$ $\psi_{\beta,b}\}$ in the Hilbert
space of inference functions with inner product $\langle f, g
\rangle_\theta = \mathrm{E}_\theta[f g^\top]$.

\begin{proposition}[What the projection does and does not achieve]
\label{prop:bg-projection}
The adjusted inference function $\psi_\alpha^*$ satisfies:
\begin{enumerate}
\item $V^*_{\alpha\beta} :=
\mathrm{E}_\theta[\psi_\alpha^*\psi_\beta^\top] = 0$ always
(variability-orthogonality);
\item $S^*_{\alpha\beta} = S_{\alpha\beta} - V_{\alpha\beta}
V_{\beta\beta}^{-1} S_{\beta\beta}$; in particular the projection
achieves Godambe orthogonality ($S^*_{\alpha\beta} = 0$) if and only
if $S_{\alpha\beta} = V_{\alpha\beta}V_{\beta\beta}^{-1}S_{\beta\beta}$;
\item a sufficient condition for (2) is that $\psi_\beta$ \emph{weakly
generates the nuisance directions}: there is a nonsingular
$M(\theta)$ such that
$\langle \partial_{\beta_j} T_\theta, g\rangle = \sum_m M_{jm}(\theta)\,
\mathrm{E}_\theta[g\,\psi_{\beta,m}]$ for all admissible $g$. Then
$S_{\alpha\beta} = V_{\alpha\beta}M^\top$ and $S_{\beta\beta} =
V_{\beta\beta}M^\top$, whence $S^*_{\alpha\beta} = 0$: the projection
achieves full Godambe orthogonality. Classically this is the case
$\psi_\beta = \partial_\beta \log p_\theta$ (nuisance score), with $M
= I$.
\end{enumerate}
\end{proposition}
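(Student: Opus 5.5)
The proof is a direct computation from the definition of $\psi_\alpha^*$ and the bilinearity of the $\mathrm E_\theta$-pairing. The only inputs are the definitions of $S$ and $V$ and the weak Bartlett identity of Lemma~\ref{lem:bartlett-nuisance}.

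For (1), I will expand
\[
\mathrm E_\theta[\psi_\alpha^*\psi_\beta^\top]
= V_{\alpha\beta} - V_{\alpha\beta}V_{\beta\beta}^{-1}V_{\beta\beta}.
\]
This vanishes identically, using only that $V_{\beta\beta}$ is positive definite as a principal block of the positive-definite $V$.

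For (2), the cross-sensitivity is $S^*_{\alpha\beta} = -\mathrm E_\theta[\partial_\beta\psi_\alpha^*]$. The route that avoids differentiating the coefficient matrix $V_{\alpha\beta}V_{\beta\beta}^{-1}$ goes through Lemma~\ref{lem:bartlett-nuisance}. The adjusted function $\psi_\alpha^*$ is weakly unbiased, since both $\psi_\alpha$ and $\psi_\beta$ are, so the lemma gives $S^*_{\alpha\beta}{}_{\cdot j} = \langle\partial_{\beta_j}T_\theta,\psi_\alpha^*\rangle$. This pairing is linear in the probe, and the coefficient matrix, evaluated at the fixed $\theta$, is a constant matrix with respect to the pairing. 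Applying the lemma again to $\psi_\alpha$ and to $\psi_\beta$ yields
\[
S^*_{\alpha\beta} = S_{\alpha\beta} - V_{\alpha\beta}V_{\beta\beta}^{-1}S_{\beta\beta}.
\]
The ``if and only if'' clause is immediate from this formula.

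Equivalently, one can differentiate the product directly. The extra term is $(\partial_\beta[V_{\alpha\beta}V_{\beta\beta}^{-1}])\,\mathrm E_\theta[\psi_\beta]$, which vanishes by unbiasedness of $\psi_\beta$. I will note this in one line, because it is the only place where a hidden derivative could appear, and it needs smoothness of $V$ from Proposition~\ref{prop:godambe-metric}.

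For (3), I take the hypothesis with $g=\psi_{\alpha,i}$ and with $g=\psi_{\beta,i}$; both are admissible by Definition~\ref{def:rif}. Combined with Lemma~\ref{lem:bartlett-nuisance}, this gives $(S_{\alpha\beta})_{ij} = \sum_m M_{jm}(V_{\alpha\beta})_{im}$, that is, $S_{\alpha\beta}=V_{\alpha\beta}M^\top$, and likewise $S_{\beta\beta}=V_{\beta\beta}M^\top$. Substituting into (2) gives
\[
S^*_{\alpha\beta}=V_{\alpha\beta}M^\top - V_{\alpha\beta}V_{\beta\beta}^{-1}V_{\beta\beta}M^\top = 0.
\]
For the classical remark, the Bartlett identity $\partial_{\beta_j}\int g\,p_\theta = \mathrm E_\theta[g\,\partial_{\beta_j}\log p_\theta]$ shows that the nuisance score generates the nuisance directions with $M=I$.

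The main point requiring care is the index and transpose bookkeeping in (3): the pairing $\langle\partial_{\beta_j}T,\psi_{\alpha,i}\rangle$ must be matched to the $(i,j)$ entry of $S_{\alpha\beta}$, so that $M$ enters as $M^\top$ on the right. Nonsingularity of $M$ is not needed for the vanishing itself; it only guarantees that $\psi_\beta$ genuinely spans the nuisance tangent space $\mathcal T_N(\theta)$.
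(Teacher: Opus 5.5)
Your proposal is correct and follows the paper's proof. Part (1) is the projection identity, and part (3) applies the generating hypothesis to $g=\psi_{\alpha,i}$ and $g=\psi_{\beta,i}$, uses the weak Bartlett identity, and substitutes into (2), exactly as the paper does. For (2), the paper takes the route you list as the equivalent one: it differentiates $\psi_\alpha^*$ directly and kills the term $\partial_\beta(V_{\alpha\beta}V_{\beta\beta}^{-1})\psi_\beta$ using $\mathrm{E}_\theta[\psi_\beta]=0$. Your Bartlett-identity variant is also valid, but it does not truly avoid differentiating the coefficient matrix, since applying the lemma to $\psi_\alpha^*$ uses the product rule on $\psi_\alpha^*$ itself, as you acknowledge.
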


\begin{proof}
(1) is the defining property of the $L^2$-projection. For (2),
$\partial_\beta \psi_\alpha^* = \partial_\beta\psi_\alpha -
\partial_\beta\bigl(V_{\alpha\beta}V_{\beta\beta}^{-1}\bigr)\psi_\beta
- V_{\alpha\beta}V_{\beta\beta}^{-1}\partial_\beta\psi_\beta$; taking
$\mathrm{E}_\theta$, the middle term vanishes because
$\mathrm{E}_\theta[\psi_\beta] = 0$, giving $S^*_{\alpha\beta} =
S_{\alpha\beta} - V_{\alpha\beta}V_{\beta\beta}^{-1}S_{\beta\beta}$.
For (3), apply the generating identity to $g = \psi_{\alpha,i}$ and
$g = \psi_{\beta,i}$: the first gives $S_{\alpha\beta} =
V_{\alpha\beta}M^\top$ by Lemma~\ref{lem:bartlett-nuisance}, the
second $S_{\beta\beta} = V_{\beta\beta}M^\top$ (weak Bartlett applied
to $\psi_\beta$); substitute into (2).
\end{proof}

Geometrically: the projection always makes the \emph{variability}
block-diagonal; it makes the \emph{metric} block-diagonal exactly when
the nuisance inference function is rich enough to represent the
nuisance directions of the model, in the sense of (3).

\subsection{Automatic orthogonality in symmetric location-scale models}\label{ssec:automatic-orthogonality}

A remarkable feature of the distributional framework is that
symmetric location--scale models exhibit \emph{automatic}
separation for odd/even instrument pairs, with no projection needed
---including heavy-tailed families such as the Cauchy, for
which moment-based instruments are unavailable, and the Cantor
location--scale family, for which no score exists at all.

\begin{proposition}[Automatic block-diagonality for odd/even probes]
\label{prop:automatic-orthogonality}
Let $\mathcal P = \{P_{\mu,\sigma} : \mu \in \mathbb R, \sigma > 0\}$
be a location-scale family, $X = \mu + \sigma Z$, with $Z$ symmetric
and characteristic function $\phi_Z$ real and even. Take $\alpha =
\mu$ (interest), $\beta = \sigma$ (nuisance), and the bounded
instruments
\[
\psi_\mu(x;\mu) = \sin\bigl(c(x-\mu)\bigr),
\qquad
\psi_\sigma(x;\mu,\sigma) = \cos\bigl(d(x-\mu)\bigr) - \phi_Z(d\sigma),
\]
with $c, d \neq 0$ such that $\phi_Z(c\sigma) \neq 0$ and $\phi_Z$ is
differentiable at $d\sigma$ with $\phi_Z'(d\sigma) \neq 0$. Then, at
every $(\mu,\sigma)$:
\[
S_{\mu\sigma} = 0, \qquad
S_{\sigma\mu} = 0, \qquad
V_{\mu\sigma} = 0,
\]
hence $G_{\mu\sigma} = 0$ by
Proposition~\ref{prop:orthogonality-nonformation}(2): the Godambe
metric of the pair $(\psi_\mu,\psi_\sigma)$ is block-diagonal, and
the pair achieves exact inferential separation between location and
scale, for \emph{every} symmetric location-scale family.
\end{proposition}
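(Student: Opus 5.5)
The plan is to compute the three cross quantities directly, using only the symmetry of $Z$ and the fact that under $P_{\mu,\sigma}$ the centred variable $X-\mu=\sigma Z$ has a law free of $\mu$. Then invoke Proposition~\ref{prop:orthogonality-nonformation}(2). All instruments are bounded and continuous, so every pairing is taken in $\mathcal G_{\mathrm b}$ via Lemma~\ref{lem:extension}. Unbiasedness is immediate: $\mathrm E[\sin(c\sigma Z)]=0$ by symmetry, and $\mathrm E[\cos(d\sigma Z)]=\phi_Z(d\sigma)$.

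\emph{Sensitivities.} Since $\psi_\mu$ does not involve $\sigma$, we have $\partial_\sigma\psi_\mu\equiv0$, so $S_{\mu\sigma}=0$ trivially. For the other cross block, $\partial_\mu\psi_\sigma = d\sin\bigl(d(x-\mu)\bigr)$, because the subtracted term $\phi_Z(d\sigma)$ does not depend on $\mu$. Hence $S_{\sigma\mu}=-d\,\mathrm E[\sin(d\sigma Z)]=0$, again by symmetry of $Z$. As a cross-check one may use Lemma~\ref{lem:bartlett-nuisance}. The expectation $\mathrm E_{\mu,\sigma}[\psi_\mu(\cdot;\mu')]$, viewed as a function of $\sigma$, equals $\mathrm E[\sin(c(\mu-\mu')+c\sigma Z)] = \sin(c(\mu-\mu'))\phi_Z(c\sigma)$. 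At $\mu'=\mu$ this is identically zero in $\sigma$, which confirms the pairing with $\partial_\sigma T$ vanishes.

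\emph{Variability.} We have $V_{\mu\sigma}=\mathrm E[\sin(c\sigma Z)\cos(d\sigma Z)]-\phi_Z(d\sigma)\,\mathrm E[\sin(c\sigma Z)]$. The integrand $z\mapsto \sin(c\sigma z)\cos(d\sigma z)$ is odd, so both terms vanish by symmetry. Equivalently, the product-to-sum formula turns the first term into $\tfrac12\mathrm E[\sin((c+d)\sigma Z)+\sin((c-d)\sigma Z)]=0$.

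\emph{Nondegeneracy.} To apply Proposition~\ref{prop:orthogonality-nonformation}(2), the pair must also satisfy the hypotheses of Proposition~\ref{prop:godambe-metric}. This is the only step with any content. $S$ is diagonal, with $S_{\mu\mu}=c\,\mathrm E[\cos(c\sigma Z)]=c\,\phi_Z(c\sigma)\neq0$ and $S_{\sigma\sigma}=d\,\phi_Z'(d\sigma)\neq0$ by hypothesis. $V$ is diagonal, with $V_{\mu\mu}=\tfrac12(1-\phi_Z(2c\sigma))$ and $V_{\sigma\sigma}=\tfrac12(1+\phi_Z(2d\sigma))-\phi_Z(d\sigma)^2$. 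Positivity of these entries is not automatic, but it follows because the sensitivities are nonzero. If $\sin(c\sigma Z)=0$ a.s., then $\cos(c\sigma Z)=\pm1$ a.s., which forces $Z$ to be lattice supported. The cleanest route is Cauchy--Schwarz: $V_{\mu\mu}=0$ would make $\psi_\mu=0$ a.s.\ for all $\mu$, so $\mathrm E_{\mu+h}[\psi_\mu]\equiv0$, contradicting $S_{\mu\mu}\neq0$ through the weak Bartlett identity~\eqref{eq:weak-bartlett}. The same argument handles $V_{\sigma\sigma}$. Smoothness in $(\mu,\sigma)$ holds where $\phi_Z$ is smooth, which covers the entire real characteristic functions of the paper's examples; in general one interprets it locally.

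With $S_{\mu\sigma}=S_{\sigma\mu}=V_{\mu\sigma}=0$, Proposition~\ref{prop:orthogonality-nonformation}(2) gives $G_{\mu\sigma}=0$. Part~(1) of that proposition then yields the separation statement. I expect the main subtlety to be this nondegeneracy bookkeeping rather than the symmetry computations themselves.
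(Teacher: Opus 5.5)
Your symmetry computations of $S_{\mu\sigma}$, $S_{\sigma\mu}$ and $V_{\mu\sigma}$ are correct and agree with the paper's. The paper routes $S_{\mu\sigma}$ through Lemma~\ref{lem:bartlett-nuisance}, while you read it off from $\partial_\sigma\psi_\mu\equiv0$. You also correctly reduce nondegeneracy to $V_{\mu\mu}>0$ and $V_{\sigma\sigma}>0$.

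The gap is your argument for those two inequalities. ``Nonzero sensitivity forces positive variability'' is the Cauchy--Schwarz bound $S^2\le V\,I$. That bound needs a score, i.e.\ $\partial_\theta T_\theta$ absolutely continuous with respect to $P_\theta$ with a square-integrable density. Without it, knowing $\psi_\mu(\cdot\,;\mu)=0$ $P_{\mu,\sigma}$-a.s.\ tells you nothing about $\mathrm E_{\mu+h,\sigma}[\psi_\mu(\cdot\,;\mu)]$, because a $P_{\mu,\sigma}$-null set need not be $P_{\mu+h,\sigma}$-null. Indeed $\mathrm E_{\mu+h,\sigma}[\sin(c(X-\mu))]=\sin(ch)\,\phi_Z(c\sigma)$, which is not identically zero.

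The implication is actually false. Take $Z=\pm1$ with probability $\tfrac12$ each, so $\phi_Z(t)=\cos t$, and choose $c\sigma=\pi$, $d\sigma=\pi/2$. All stated hypotheses hold: $\phi_Z(c\sigma)=-1\neq0$ and $\phi_Z'(d\sigma)=-1\neq0$. Yet $V_{\mu\mu}=\sin^2\pi=0$ and $V_{\sigma\sigma}=\tfrac12(1+\cos\pi)-\cos^2(\pi/2)=0$. So $V$ is the zero matrix and $G$ is not even defined. Your own remark that $V_{\mu\mu}=0$ forces lattice support was the right lead: it shows positivity is a genuine restriction on the law, not a consequence of $S\neq0$.

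Positivity of $V$ therefore cannot be derived from the stated hypotheses; an extra assumption on the law of $Z$ is needed. The paper's proof supplies one tacitly, asserting $V\succ0$ because the residuals are non-proportional bounded functions ``under a law of full support''. That condition is not in the statement.

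A cleaner sufficient condition is that $Z$ has no atoms. This also covers the Cantor family, which is non-atomic but does not have full support. Under it, $V_{\mu\mu}=\mathrm E[\sin^2(c\sigma Z)]=0$ would confine $Z$ to the countable set $(\pi/(c\sigma))\mathbb Z$. Likewise, $V_{\sigma\sigma}=\mathrm{Var}\cos(d\sigma Z)=0$ would confine $Z$ to a countable level set of $z\mapsto\cos(d\sigma z)$. Both are impossible for a non-atomic law.

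Replace your Bartlett/Cauchy--Schwarz step by this direct argument and state the extra hypothesis explicitly. The remainder of your proof then coincides with the paper's.
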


\begin{proof}
All three identities are expectations of odd or even functions under
the symmetric law of $X - \mu = \sigma Z$, computed from the real CF.
(i) $S_{\mu\sigma}$: $\psi_\mu$ does not depend on $\sigma$ and is
unbiased uniformly in $\sigma$, since $\mathrm{E}[\sin(c(X-\mu))] =
\mathrm{Im}\,\phi_Z(c\sigma) = 0$; Lemma~\ref{lem:bartlett-nuisance}
gives $S_{\mu\sigma} = \langle \partial_\sigma T_{\mu,\sigma},
\psi_\mu\rangle = \partial_\sigma\, \mathrm{Im}\,\phi_Z(c\sigma) = 0$.
(ii) $S_{\sigma\mu} = -\mathrm{E}[\partial_\mu\psi_\sigma] =
-\,d\,\mathrm{E}[\sin(d(X-\mu))] = -\,d\,\mathrm{Im}\,\phi_Z(d\sigma)
= 0$.
(iii) $V_{\mu\sigma} = \mathrm{E}[\sin(c(X-\mu))\cos(d(X-\mu))] -
\phi_Z(d\sigma)\,\mathrm{E}[\sin(c(X-\mu))]$; by the product-to-sum
identity the first term is $\tfrac12\,\mathrm{Im}\,[\phi_Z((c+d)\sigma)
+ \phi_Z((c-d)\sigma)] = 0$, and the second vanishes as in (i).
Finally $S_{\mu\mu} = c\,\phi_Z(c\sigma) \neq 0$ and $S_{\sigma\sigma}
= d\,\phi_Z'(d\sigma) \neq 0$ by hypothesis, and $V$ is positive
definite (the two residuals are non-proportional bounded functions
under a law of full support), so
Proposition~\ref{prop:godambe-metric} applies and
Proposition~\ref{prop:orthogonality-nonformation}(2) yields
$G_{\mu\sigma} = 0$.
\end{proof}

In the language of nonformation, this automatic orthogonality reflects
the \emph{G-nonformation} structure of the location-scale model
(J{\o}rgensen and Labouriau, 2012, Section~3.3; see also
\cite[Section~8.7]{C}): the model is generated by the affine group,
and odd/even instrument pairs respect the group structure without any
adjustment. The Godambe information of the location component,
$G_{\mu\mu} = 2c^2\phi_Z(c\sigma)^2/(1-\phi_Z(2c\sigma))$, is the
closed-form expression obtained in \cite[Section~8.5]{C}. In
particular, Proposition~\ref{prop:automatic-orthogonality} applies
verbatim to the Cantor location--scale family of
Section~\ref{ssec:cantor} (Remark~\ref{rem:cantor-locscale}): exact
location--scale separation in a model that possesses no likelihood.

\subsection{Geometric interpretation}\label{ssec:geometric-nonformation}

The results above give a unified geometric picture. The Godambe
metric on the full parameter space $\Theta$ encodes both the information
about each parameter component and the interaction between them. The
off-diagonal block of the Godambe metric measures the
failure of inferential separation
(Proposition~\ref{prop:orthogonality-nonformation}): when it vanishes,
the interest and nuisance directions are Riemannian-orthogonal, and
inference about $\alpha$ proceeds, asymptotically, as if $\beta$ were
known.

The Bhapkar--Godambe projection provides a constructive mechanism that
always block-diagonalises the \emph{variability}, and
block-diagonalises the \emph{metric} exactly when the nuisance
inference function weakly generates the nuisance directions of the
model (Proposition~\ref{prop:bg-projection}). In the Riemannian
picture, this amounts to choosing an adapted basis of inference
functions in which the metric is block-diagonal.

This interpretation extends naturally to the distributional setting.
When classical densities are not available, the orthogonality
condition~\eqref{eq:godambe-orthogonality} is verified through
distributional expectations---involving characteristic functions,
distributional moments, or Schwartz-space pairings---rather than
through density-based integrals. The Godambe metric and its
block structure provide a purely inference-function-based
characterisation of nonformation that does not require a likelihood.

\section{Discussion}\label{sec:discussion}

The question this paper set out to answer is representational: how
much of the differential-geometric treatment of statistical models
survives when the density-based representation of information is
withdrawn. The answer is: essentially all of its local core, provided
the metric is allowed to come from an instrument rather than from the
likelihood. The construction reads simply---the law is the tempered
distribution $T_\theta$; an instrument extracts information from it;
the Godambe information is the geometry this information induces on
$\Theta$. Theorem~\ref{thm:main} attaches a smooth Riemannian metric
to any model so equipped; Proposition~\ref{prop:loewner} orders the
resulting family below the Fisher metric whenever the latter exists,
with the score attaining the top; and the examples of
Sections~\ref{sec:examples}--\ref{sec:spde} show that the hypotheses
are satisfiable far outside the Fisher--Rao class: under
parameter-dependent support, in the absence of any dominating measure
(the Cantor family, Proposition~\ref{prop:cantor-undominated}),
without moments of any order (the $\alpha$-stable lattice field), and
in dominated models whose score is a biased inference function (the
stratified mixtures of \cite{Labouriau2022}). The companion
inference-function paper \cite{C} supplies the instruments and their
efficiency theory; the transversality companion
\cite{LabouriauTransversality} shows that a generic instrument makes
the hypotheses of Theorem~\ref{thm:main} hold outside exceptional
configurations.

The lattice stochastic heat equation shows that the construction is
not confined to textbook families. The weak Godambe information of
eigenmode probes is available in closed form, the exact multi-probe
information follows from the joint characteristic function (with the
Gaussian decoupling recovered at $\alpha = 2$), and the geometry
carries a two-sided stability dictionary: it stabilises at rate
$\alpha\theta\lambda_k$ under stable dynamics
(Proposition~\ref{prop:spde-stability}), collapses under unstable
dynamics (Remark~\ref{rem:spde-collapse}), and its
frequency-optimised stationary form is the scale-invariant metric
$\mathrm{const}\cdot d\theta^2/\theta^2$
(Remark~\ref{rem:spde-adaptive}). Dynamical and geometric stability
are thus linked by explicit rates, without a variance in sight.

The price of the extension should be stated as plainly as the gain.
A Godambe metric is instrument-relative: by Chentsov's theorem no
Markov-invariant metric can exist at this level of generality
\cite{Chentsov1982,AyJostLeSchwachhofer2017}, and in non-regular
models the family of Godambe metrics has no maximal element
(Remark~\ref{rem:no-max})---the geometric face of superefficiency. We
regard this relativity not as a defect but as the accurate geometry
of measurement: a metric quantifies the information a given
instrument extracts; the family over instruments quantifies the
model; and the roles distinguished in Section~\ref{sec:canonicity}
(inferential, diagnostic, geometric, computational) select different
members for different purposes. Within this family, quadratic Stein
discrepancies built on finitely many identities read exactly the
Godambe geometry, while the reproducing-kernel constructions of
Section~\ref{sec:stein} aggregate a continuum of identities into
geometries beyond the reach of any single inference function, at the
cost of a Fr\'echet differentiability assumption and a nondegeneracy
condition of transversality type, inherited from and studied in the
companion papers \cite{D,LabouriauTransversality}.

The geometric reading of inferential separation completes the
picture. The nuisance tangent space lives on the model side, in
$\mathcal S'(\mathbb R^k)$; the weak Bartlett identity converts
pairing-orthogonality into the vanishing of a sensitivity block; and
full separation is exactly block-diagonality of the metric---achieved
constructively by the Bhapkar--Godambe projection under an explicit
generating condition
(Proposition~\ref{prop:bg-projection}), and automatically, for
odd/even instrument pairs, in every symmetric location--scale family
(Proposition~\ref{prop:automatic-orthogonality}), including the
heavy-tailed Cauchy family and the Cantor family, the latter
possessing no score at all. Data reduction
thereby acquires a metric expression that requires neither densities
nor square-integrable scores.

Beyond the directions already noted---curvature in multiparameter
families, $\alpha$-connection analogues with their duality and
flatness theory, and Godambe-based natural-gradient methods in
non-regular models---the Cantor example points to a broader
programme: models whose parameters sit inside the generating
iterated-function system (Remark~\ref{rem:cantor-selfsimilar}), so
that the parameter describes the geometric mechanism producing the
law rather than the law itself. The weak estimating equations of the
present framework extend to that setting, and we intend to pursue it
separately.

\bigskip

\noindent
\textbf{Acknowledgement.}
The author is grateful to Serhii V.~Zabolotnii for suggesting some
corrections in the first arXiv version of this paper regarding the role 
of the score of the Cauchy location--scale family 
(Sections~\ref{ssec:orthogonality-nonformation}
and~\ref{ssec:automatic-orthogonality}, Section~\ref{sec:discussion},
and Remark~\ref{rem:cauchy-sinusoid}). The Cauchy family is
likelihood-regular; what it lacks is moments. The multi-frequency
closure calculation of Remark~\ref{rem:cauchy-closure}---the explicit
Gram system, the optimised efficiencies $\kappa_m$, and the
completeness argument showing $\kappa_m\uparrow1$---is likewise due to
him (personal communication, July 2026), and is reproduced here with
his kind permission.

\end{document}